\renewenvironment{proof}[1][\proofname]{\par
	\pushQED{\qed}%
	\normalfont \topsep\z@skip 
	\trivlist
	\item[\hskip\labelsep
	\itshape
	#1\@addpunct{.}]\ignorespaces
}{%
	\popQED\endtrivlist\@endpefalse
}
\theoremstyle{definition}
\newtheorem{Definition}{Definition}[section]
\newtheorem{Notation}[Definition]{Notation}
\theoremstyle{plain}
\newtheorem{Remark}[Definition]{Remark}
\newtheorem{Lemma}[Definition]{Lemma}
\newtheorem{Corollary}[Definition]{Corollary}
\newtheorem{Proposition}[Definition]{Proposition}
\newtheorem*{Proposition*}{Proposition}
\newtheorem{Theorem}[Definition]{Theorem}
\newtheorem*{Theorem*}{Theorem}
\newtheorem{introthm}{Theorem}
\newtheorem{introcor}[introthm]{Corollary}
\theoremstyle{definition}
\newtheorem{introdef}[introthm]{Definition}
\theoremstyle{definition}
\newtheorem{introex}[introthm]{Example}
\Crefname{Corollary}{Corollary}{Corollaries}
\Crefname{Lemma}{Lemma}{Lemmas}
\DeclareMathOperator{\id}{id}
\DeclareMathOperator{\op}{op}
\DeclareMathOperator{\QCoh}{QCoh}
\DeclareMathOperator{\Spec}{Spec}
\DeclareMathOperator{\Cart}{Cart}
\DeclareMathOperator{\Arr}{Arr}
\DeclareMathOperator{\Fun}{Fun}
\DeclareMathOperator{\ev}{ev}
\DeclareMathOperator{\Map}{Map}
\DeclareMathOperator{\Eq}{Eq}
\DeclareMathOperator{\Mod}{Mod}
\DeclareMathOperator{\Cat}{Cat}
\DeclareMathOperator{\inc}{inc}
\DeclareMathOperator{\LFun}{LFun}
\DeclareMathOperator{\tex}{t-ex}
\DeclareMathOperator{\ex}{ex}
\DeclareMathOperator{\lex}{lex}
\DeclareMathOperator{\rtex}{r-t-ex}
\DeclareMathOperator{\cofib}{cofib}
\DeclareMathOperator{\pr}{pr}
\DeclareMathOperator{\End}{End}
\DeclareMathOperator{\LEq}{LEq}
\DeclareMathOperator{\Frob}{Frob}
\newcommand\D{\mathcal{D}}
\newcommand\B{\mathcal{B}}
\newcommand\A{\mathcal{A}}
\newcommand\FF{\mathbb{F}}
\newcommand\C{\mathcal{C}}
\newcommand\ZZ{\mathbb{Z}}
\newcommand\DU{\mathcal{D}(U_{\mathcal{A}})}
\newcommand\UD{U_{\mathcal{D}(\mathcal{A})}}
\newcommand\DL{\mathcal{D}(L_{\mathcal{A}})}
\newcommand\LD{L_{\mathcal{D}(\mathcal{A})}}
\newcommand\heart{\heartsuit}
\newcommand\taug[1]{\tau_{\geq #1}}
\newcommand\iotag[1]{\iota_{\geq #1}}
\newcommand\taul[1]{\tau_{\leq #1}}
\newcommand\iotal[1]{\iota_{\leq #1}}
\newcommand{\laxtimes}[1]{\mathop{\times\mkern-13mu\raise1.3ex\hbox{$\scriptscriptstyle\to$}_{#1}}}
\newcommand*{\sheafhom}{\mathscr{H}\kern -.5pt om}
\renewcommand\O{\mathcal{O}}
\title{The derived \texorpdfstring{$\infty$}{Infinity}-category of Cartier Modules}
\author{Klaus Mattis\footnote{\href{www.klaus-mattis.com}{www.klaus-mattis.com}}\,\, and Timo Weiß\footnote{\href{mailto:timo.weiss@uni-mainz.de}{timo.weiss@uni-mainz.de}}}
\begin{document}

    \maketitle
    \begin{abstract}
        For an endofunctor $F\colon\C\to\C$ on an ($\infty$-)category $\C$ we 
        define the $\infty$-category $\Cart(\C,F)$ of generalized Cartier modules 
        as the lax equalizer of $F$ and the identity.
        This generalizes the notion of Cartier modules on $\FF_p$-schemes considered in \cite{BBCartmod}. 
        We show that in favorable cases $\Cart(\C,F)$ is monadic over $\C$. 
        If $\A$ is a Grothendieck abelian category and $F\colon\A\to\A$ is an exact and colimit-preserving endofunctor, 
        we use this fact to construct an equivalence $\D(\Cart(\A,F)) \simeq \Cart(\D(\A),\D(F))$ of stable $\infty$-categories. 
        We use this equivalence to construct a perverse t-structure on $\D(\Cart(\Mod(X), F_*))$ for any Noetherian $\FF_p$-scheme $X$
        with absolute Frobenius $F$.
        If $F$ is finite, this coincides with the perverse t-structure constructed in \cite{Baudin}.
    \end{abstract}
    \tableofcontents
    \newpage
    \section{Introduction}

A distinguishing aspect of algebraic geometry over a field of positive characteristic $p > 0$ is the presence of the Frobenius endomorphism. A particular point of interest are modules with an action of the Frobenius.

If $X$ is an $\FF_p$-scheme then a (quasi-coherent) $\O_X$-module with a left action of the absolute Frobenius $F\colon X \to X$ is called a Frobenius module. They are related via a Riemann--Hilbert-type correspondence to $p$-torsion étale sheaves, cf.\ \cite{EK, BL, BP}. 

There is also a dual notion of Cartier modules, which are related to Frobenius modules via Grothendieck--Serre duality by a result of Baudin \cite[Theorem 4.2.7]{Baudin}. They are (quasi-coherent) $\O_X$-modules with a right action of the Frobenius, or equivalently pairs $(M,\kappa_M)$ of an $\O_X$-module $M$ and an $\O_X$-linear morphism $\kappa_M\colon F_*M \to M$. Their category $\Cart(X)$ was first considered by Anderson in \cite{Anderson} and more thoroughly studied by Blickle and Böckle in \cite{BBCartmod, BBCartcrys}. 
Both Frobenius modules and Cartier modules are important tools in positive characteristic commutative algebra and algebraic geometry,
and have been studied by many authors \cite{lyubeznik1997f,lyubeznik2001commutation,schwede2009f,patakfalvi2016frobeniustechniquesbirationalgeometry,99bc2f14-cf95-3c44-a77d-e80ab3dcf24f}.

The aim of this paper is to introduce a general framework where the classical notion of Cartier modules (and Frobenius modules) can be viewed as a special instance:
For $\C$ any ($\infty$-)category and $F\colon \C \to \C$ any endofunctor we define the following ($\infty$-)category.
\begin{introdef}[\Cref{DefCart}]
    We define the $\infty$-category $\Cart(\C,F)$ of \textit{Cartier modules in $\C$ with respect to the functor $F$} as the pullback
    \[\begin{tikzcd}
        \Cart(\C,F) \ar{r}{U_{\C}} \ar{d}[swap]{\kappa_{\C}} & \C \ar{d}{(F,\id_{\C})} \\
        \Arr(\C) \ar{r}[swap]{(\ev_0,\ev_1)} & \C \times \C
    \end{tikzcd}\]
    in $\Cat_{\infty}$, where $\Arr(\C)\coloneqq\Fun(\Delta^1,\C)$ is the $\infty$-category of arrows in $\C$ and the functors $\ev_0, \ev_1\colon \Fun(\Delta^1,\C) \to \C$ are induced by the inclusions $\{0\} \hookrightarrow \Delta^1$ and $\{1\} \hookrightarrow \Delta^1$, respectively.
\end{introdef}

\begin{introex}
    As a special case, we recover the definition of Cartier modules in \cite{BBCartmod} as $\Cart(X)\coloneqq\Cart(\QCoh(X),F_*)$,
    where $F_*$ denotes the direct image functor along the absolute Frobenius $F$ on $X$.
\end{introex}

This general framework allows us to consider the $\infty$-category of Cartier modules in the derived $\infty$-category of quasicoherent modules, or more generally in the derived $\infty$-category $\D(\A)$ of any Grothendieck abelian category $\A$.
The main theorem of this paper is the following comparison result, which roughly states that the operations 
of taking Cartier modules and taking the derived $\infty$-category commute.
\begin{introthm}[\Cref{derived}]\label{mainthm}
    Let $\A$ be a Grothendieck abelian category and $F\colon \A \to \A$ an exact and colimit-preserving functor. In particular, $F$ induces a functor $\D(F)\colon \D(\A) \to \D(\A)$. Then $\Cart(\A,F)$ is a Grothendieck abelian category and there is a canonical functor $\D(\Cart(\A,F)) \to \Cart(\D(\A),\D(F))$ which is an equivalence of $\infty$-categories. Moreover, both $\infty$-categories carry a natural t-structure and the equivalence is t-exact.
\end{introthm}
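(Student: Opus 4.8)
The plan is to reduce the entire statement to the monadicity of $\Cart(-,-)$ and then to propagate it through the derived $\infty$-category by means of the Barr--Beck--Lurie theorem. First I would invoke the monadicity of $\Cart$ in favorable cases to identify $\Cart(\A,F)\simeq\Mod_T(\A)$, where $T=\bigoplus_{n\geq 0}F^{\circ n}$ is the free monad generated by $F$; the same result applied to the pair $(\D(\A),\D(F))$ gives $\Cart(\D(\A),\D(F))\simeq\Mod_{\widetilde T}(\D(\A))$ with $\widetilde T=\bigoplus_{n\geq 0}\D(F)^{\circ n}$. Since $F$ is exact and colimit-preserving and coproducts are exact in a Grothendieck abelian category (AB5 $\Rightarrow$ AB4), the monad $T$ is exact and colimit-preserving. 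The forgetful functor $U\colon\Mod_T(\A)\to\A$ is then exact and conservative, so $\Mod_T(\A)$ is abelian; it is cocomplete with a set of generators given by the free modules on a generator of $\A$, and filtered colimits are exact since they are computed on underlying objects. Hence $\Cart(\A,F)=\Mod_T(\A)$ is Grothendieck abelian, settling the first assertion.

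For the equivalence, I would derive the free--forgetful adjunction $\mathrm{free}\dashv U$ to an adjunction $\D(\mathrm{free})\dashv\D(U)$ between $\D(\A)$ and $\D(\Mod_T(\A))$; here $\D(U)$ and $\D(\mathrm{free})$ are derived functors of exact, colimit-preserving functors, hence are themselves exact and colimit-preserving. I then apply the Barr--Beck--Lurie theorem to $\D(U)$. Conservativity holds because an object of $\D(\Mod_T(\A))$ vanishes iff all its cohomology objects vanish, and $U$ is exact and conservative, so this is detected by $\D(U)$. Preservation of $\D(U)$-split geometric realizations is immediate since $\D(U)$ preserves all colimits. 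Therefore $\D(U)$ is monadic, exhibiting $\D(\Mod_T(\A))\simeq\Mod_{M}(\D(\A))$ for the induced monad $M=\D(U)\circ\D(\mathrm{free})$.

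It remains to identify $M$. Because composites of derived functors of exact functors are the derived functors of the composite, $M\simeq\D(U\circ\mathrm{free})=\D(T)$ as monads, the monad structures agreeing since both arise from the free--forgetful adjunction. Using that $\D(-)$ sends exact, colimit-preserving functors to colimit-preserving functors compatibly with composition and coproducts---this is precisely where exactness prevents the appearance of higher derived terms---I obtain $\D(T)=\D\bigl(\bigoplus_n F^{\circ n}\bigr)\simeq\bigoplus_n\D(F)^{\circ n}=\widetilde T$. Combining these yields $\D(\Cart(\A,F))\simeq\Mod_{\widetilde T}(\D(\A))\simeq\Cart(\D(\A),\D(F))$, and I would check that this is the canonical functor by constructing the latter from the universal property of $\D(\Cart(\A,F))$ applied to the t-exact inclusion $\Cart(\A,F)\hookrightarrow\Cart(\D(\A),\D(F))^{\heart}$ and matching it with the Barr--Beck--Lurie comparison.

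Finally, for the t-structures: since $T$ is exact, $\widetilde T=\D(T)$ is t-exact for the standard t-structure on $\D(\A)$, so $\Mod_{\widetilde T}(\D(\A))$ carries the lifted t-structure whose forgetful functor is t-exact and whose heart is $\Mod_{T}(\A)=\Cart(\A,F)$, while $\D(\Cart(\A,F))$ carries its standard t-structure. Under the equivalence the two forgetful functors to $\D(\A)$ correspond, and both t-structures are characterized by connectivity of the underlying complex; hence the equivalence is t-exact. I expect the main obstacle to be the verification just above: checking the Barr--Beck--Lurie hypotheses for $\D(U)$ and, above all, rigorously identifying the induced monad with $\widetilde T$, that is, showing that passage to the derived $\infty$-category commutes with the infinite coproduct $\bigoplus_n F^{\circ n}$ and with the free--forgetful composite. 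This compatibility relies essentially on the exactness and colimit-preservation of $F$, and is the technical heart of the argument.
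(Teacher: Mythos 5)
Your proposal follows essentially the same route as the paper: establish that both $\D(\Cart(\A,F))$ and $\Cart(\D(\A),\D(F))$ are monadic over $\D(\A)$ (the first by deriving the monadicity of $\Cart(\A,F)$ over $\A$, the second directly), then identify the two monads — whose common underlying functor is $\bigoplus_{n\geq 0}\D(F)^{\circ n}$ — using exactness, preservation of coproducts, and the universal property of $\D(-)$ restricted to the heart. The one step you should treat with more care is the phrase ``as monads'': rather than producing a coherent equivalence of algebra objects in $\Fun(\D(\A),\D(\A))$, the paper invokes \cite[Corollary 4.7.3.16]{HA}, which only requires checking that the already-constructed comparison functor $\alpha$ carries free objects to free objects via the unit map, and this objectwise check is what the reduction to the heart actually accomplishes.
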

There was strong evidence that suggested such a result, as there is a similar equivalence of stable $\infty$-categories 
\begin{equation*}
    \D(\Arr(\A)) \simeq \Arr(\D(\A)) \, ,
\end{equation*}
see e.g.\ the answer of Hoyois on mathoverflow \cite{OverflowHoyois}.
Note that this equivalence does not hold if one replaces the derived $\infty$-category $\D(-)$ 
by the ordinary derived category $h\D(-)$. In fact, there is not even a triangulated structure on $\Arr(h\D(\A))$, except for the trivial case $\A = 0$.
The reason for this is that $\Arr(h\D(X))$ is the category of `homotopy commutative' arrows in $\D(X)$, which is not equivalent to $h\Arr(\D(X))$, the category of `homotopy coherent' arrows in $\D(X)$, cf.\ \cite[Section 1.2.6]{HTT}. 

This `coherence problem' is why we have to consider (stable) $\infty$-categories instead of the triangulated theory:
One can still write down a comparison functor
\[\Phi \colon h\D(\Cart(X)) \to \Cart(h\D(X), F_*) \, ,\]
but this functor fails to be faithful.

Note that it is easy to map into the category $\Cart(h\D(X), F_*)$ (since it is defined as a limit),
but very hard to map into the category $h\D(\Cart(X))$ (since this is a localization).
This makes it hard to define functors into the ordinary derived category of Cartier modules 
(e.g.\ cohomological operations for Cartier modules as in \cite{BBCartcrys}), 
like for example the twisted inverse image functor $f^!$, which are not induced by corresponding functors on the level of abelian categories 
$\Cart(X) \to \Cart(Y)$. 

Our main theorem gives a convenient solution to this problem,
as it allows to easily construct functors into $\D(\Cart(X))$, which via the equivalence $\D(\Cart(X)) \simeq \Cart(\D(X))$ 
has a presentation as a limit of $\infty$-categories.
We will use this result in upcoming work \cite{us} where we construct a coherent functor formalism\footnote{We chose this name since the authors are currently unaware how many of the six functors exist but are confident that Cartier modules do \textit{not} admit a full six functor formalism.} of derived Cartier modules in appropriate geometric settings. This will in particular include a construction of the twisted inverse image functor $f^!$.

\subsection*{Consequences of the main theorem}
Specializing \Cref{mainthm}, we deduce the following:
If we apply it with $F=\id$, we recover the following well-known result about the derived $\infty$-category of the endomorphism category $\End(\A)$ of a Grothendieck abelian category $\A$ (cf.\ \cite[Definition 3.5]{BGT} for the definition of the endomorphism category of an $\infty$-category).
\begin{introcor}[\Cref{endmainthm}]
    There is a canonical equivalence of $\infty$-categories $\D(\End(\A)) \to \End(\D(\A))$.
\end{introcor}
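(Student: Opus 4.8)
The plan is to deduce the corollary as the special case $F = \id_\A$ of \Cref{mainthm}, so that almost all of the work is already done. The only genuine task is to identify the two instances of $\Cart(-,\id)$ appearing in \Cref{mainthm} with the corresponding endomorphism categories. Recall that, by \Cref{DefCart}, the $\infty$-category $\Cart(\C,F)$ is the pullback of $\Arr(\C) \xrightarrow{(\ev_0,\ev_1)} \C \times \C$ along $(F,\id_\C)\colon \C \to \C \times \C$; equivalently it is the lax equalizer of $F$ and $\id_\C$, with objects the pairs $(c,\alpha\colon F(c) \to c)$. Setting $F = \id_\C$ turns the right-hand vertical map into the diagonal $\C \to \C \times \C$, so that $\Cart(\C,\id_\C)$ becomes the pullback of $\Arr(\C)$ along the diagonal, whose objects are the pairs $(c,\alpha\colon c \to c)$ of an object together with an endomorphism. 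First I would record a short lemma identifying this pullback with the endomorphism category $\End(\C)$ of \cite[Definition 3.5]{BGT}, i.e.\ a canonical equivalence $\Cart(\C,\id_\C) \simeq \End(\C)$ natural in $\C$.

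Granting this, the corollary follows immediately. The identity functor $\id_\A$ is exact and colimit-preserving, so \Cref{mainthm} applies and produces a canonical equivalence $\D(\Cart(\A,\id_\A)) \simeq \Cart(\D(\A),\D(\id_\A))$, which is moreover t-exact. It remains only to observe that $\D(\id_\A) \simeq \id_{\D(\A)}$: the functor $\D(F)$ is obtained by applying $F$ termwise to complexes, so the derived functor of the identity is the identity. Feeding the lemma into both ends and using this last identification yields the chain of canonical equivalences
\[
    \D(\End(\A)) \simeq \D(\Cart(\A,\id_\A)) \simeq \Cart(\D(\A),\id_{\D(\A)}) \simeq \End(\D(\A)),
\]
whose composite is the asserted comparison functor $\D(\End(\A)) \to \End(\D(\A))$.

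The hard part will not be any new computation --- \Cref{mainthm} carries the entire analytic weight --- but rather the bookkeeping that guarantees the statement as phrased. Concretely, the delicate point is to match \cite[Definition 3.5]{BGT} with the lax-equalizer description of $\Cart(\C,\id_\C)$ and to check that this identification is sufficiently natural to intertwine with the equivalence of \Cref{mainthm}; this is what ensures the resulting composite is \emph{the} canonical functor rather than merely \emph{an} equivalence. Once conventions are reconciled, compatibility with $\D(\id_\A) \simeq \id_{\D(\A)}$ is formal, and the t-structures (which the corollary does not even mention) are inherited for free from \Cref{mainthm}.
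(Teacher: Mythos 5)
Your proposal is correct and follows exactly the paper's route: the paper likewise obtains \Cref{endmainthm} by specializing \Cref{derived} to $F=\id_{\A}$, with the identifications $\Cart(\C,\id_{\C})\simeq\End(\C)$ and $\D(\id_{\A})\simeq\id_{\D(\A)}$ left implicit. The only difference is that you spell out the bookkeeping (matching \cite[Definition 3.5]{BGT} with the lax-equalizer description) that the paper treats as immediate.
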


On the other hand, if we apply \Cref{mainthm} with $\A=\QCoh(X)$ and $F=F_*$ the (exact) Frobenius pushforward functor, then we obtain the result about classical Cartier modules that was the motivation for this paper.

\begin{introcor}[\Cref{classicalCart}]
    There is a canonical equivalence of $\infty$-categories $\D(\Cart(X)) \to \Cart(\D(X),F_*)$.
\end{introcor}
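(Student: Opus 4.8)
The plan is to obtain this as the special case of \Cref{mainthm} with $\A = \QCoh(X)$ and $F = F_*$, the pushforward along the absolute Frobenius. Once we know that $\QCoh(X)$ is a Grothendieck abelian category and that $F_* \colon \QCoh(X) \to \QCoh(X)$ is exact and colimit-preserving, \Cref{mainthm} yields a canonical t-exact equivalence
\[
\D\bigl(\Cart(\QCoh(X),F_*)\bigr) \xrightarrow{\ \sim\ } \Cart\bigl(\D(\QCoh(X)),\D(F_*)\bigr).
\]
Unwinding the conventions $\Cart(X)=\Cart(\QCoh(X),F_*)$ and $\D(X)=\D(\QCoh(X))$, and identifying $\D(F_*)$ with the derived Frobenius pushforward (see below), this is exactly the asserted equivalence. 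So the entire task reduces to verifying the two hypotheses of \Cref{mainthm}.

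That $\QCoh(X)$ is Grothendieck abelian is classical: it is cocomplete, has a generator, and has exact filtered colimits; moreover the inclusion $\QCoh(X)\hookrightarrow\Mod(\O_X)$ into all $\O_X$-modules preserves colimits, since direct sums, cokernels and filtered colimits of quasi-coherent sheaves remain quasi-coherent. The crucial input is the behaviour of $F_*$, and here I would exploit that the absolute Frobenius $F\colon X\to X$ is the identity on the underlying topological space and is an affine (indeed integral) morphism. As $F$ is a homeomorphism, the pushforward $F_*\colon\Mod(\O_X)\to\Mod(\O_X)$ commutes with the forgetful functor to sheaves of abelian groups, where it acts as the identity; since that forgetful functor creates limits and colimits, $F_*$ on $\Mod(\O_X)$ is exact and preserves arbitrary limits and colimits. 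Concretely, locally over an affine open $\Spec R \subseteq X$ it is restriction of scalars along the $p$-th power map $R\to R$, computed on underlying abelian groups. Being affine, $F$ also has $F_*$ preserving quasi-coherence, so $F_*$ restricts to an endofunctor of $\QCoh(X)$.

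Combining these, the restriction $F_*\colon\QCoh(X)\to\QCoh(X)$ is exact and colimit-preserving: the fully faithful exact inclusion $\QCoh(X)\hookrightarrow\Mod(\O_X)$ preserves colimits and has essential image stable under $F_*$, so exactness and cocontinuity descend from $\Mod(\O_X)$ to $\QCoh(X)$. This is the step I expect to demand the most care, since $F_*$ is a priori only a right adjoint (to $F^*$) and hence preservation of colimits, rather than of limits, is the nontrivial assertion; it is essential here both that colimits in $\QCoh(X)$ agree with those in $\Mod(\O_X)$ and that it is the \emph{absolute} Frobenius, topologically the identity, rather than a relative one. With both hypotheses verified, \Cref{mainthm} applies directly. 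Finally, because $F_*$ is exact, its induced functor $\D(F_*)$ on derived $\infty$-categories is computed termwise on complexes and therefore coincides with the derived Frobenius pushforward on $\D(X)$; no derived correction is needed, the target is canonically $\Cart(\D(X),F_*)$, and the corollary follows.
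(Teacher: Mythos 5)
Your proposal is correct and follows exactly the paper's route: the paper's entire proof of this corollary is to apply \Cref{derived} with $\A=\QCoh(X)$ and the exact, colimit-preserving functor $F_*$, with the exactness of $F_*$ justified by $F$ being affine. You simply spell out the hypothesis-checking (Grothendieck abelianness of $\QCoh(X)$, exactness and cocontinuity of $F_*$ via the Frobenius being a homeomorphism on the underlying space) in more detail than the paper does, and all of these verifications are accurate.
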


Moreover, for $\A=\QCoh(X)$ and $F=F^*$ with $X$ a regular Noetherian $\FF_p$-scheme, we get a corresponding result about classical Frobenius modules.

\begin{introcor}[\Cref{cartadjoint,classicalFrob}]
    There is a canonical equivalence of $\infty$-categories $\D(\Frob(X)) \to \Cart(\D(X),F^*)$.
\end{introcor}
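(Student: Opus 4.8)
The plan is to deduce this as the special case $\A = \QCoh(X)$ and $F = F^*$ of \Cref{mainthm}, so that the only real work lies in checking the hypotheses on $F^*$ and in identifying the induced functor on derived $\infty$-categories. First I would unwind \Cref{DefCart}: an object of $\Cart(\QCoh(X), F^*)$ is a quasi-coherent sheaf $M$ together with an $\O_X$-linear map $F^* M \to M$, which is precisely the datum of a left action of the Frobenius. This gives a definitional identification $\Frob(X) \simeq \Cart(\QCoh(X), F^*)$, and with the convention $\D(X) \coloneqq \D(\QCoh(X))$ the source and target of the desired equivalence become the two sides of \Cref{mainthm}.

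The substantive point is verifying the two hypotheses of \Cref{mainthm} for $F = F^*$. Colimit-preservation is formal: the absolute Frobenius pushforward $F_*$ is a right adjoint of $F^*$, so $F^*$ preserves all colimits. Exactness is the crucial step and is exactly where the hypotheses on $X$ enter. As a pullback, $F^*$ is automatically right exact, so it suffices to prove it is left exact, equivalently that the absolute Frobenius $F \colon X \to X$ is flat. By Kunz's theorem, a Noetherian $\FF_p$-scheme is regular if and only if its absolute Frobenius is flat; since $X$ is assumed regular and Noetherian, $F$ is flat and therefore $F^*$ is exact. I expect this verification to be the content of \Cref{cartadjoint}, and it is the main obstacle of the proof: the remaining steps are purely formal, but the argument genuinely requires regularity here, since without it $F^*$ fails to be exact and \Cref{mainthm} does not apply.

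With both hypotheses in hand, \Cref{mainthm} produces a canonical t-exact equivalence
\[ \D(\Frob(X)) = \D(\Cart(\QCoh(X), F^*)) \xrightarrow{\ \sim\ } \Cart(\D(X), \D(F^*)) \, . \]
It then remains to identify the endofunctor $\D(F^*)$ of $\D(X)$ with the Frobenius pullback occurring in the target $\Cart(\D(X), F^*)$. Since $F^*$ is exact, $\D(F^*)$ is computed by applying $F^*$ degreewise, and since $F$ is flat the total left derived functor requires no resolution, so $\D(F^*) = \mathbf{L}F^* = F^*$ as an endofunctor of $\D(X)$. A clean way to pin this down is by adjunction: $\D(F^*)$ is left adjoint to $\D(F_*) = \R F_*$, which characterizes it as the derived Frobenius pullback. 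Assembling these identifications yields the stated equivalence $\D(\Frob(X)) \simeq \Cart(\D(X), F^*)$, which I take to be \Cref{classicalFrob}.
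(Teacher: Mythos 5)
Your proposal is correct and follows essentially the same route as the paper: both arguments reduce to the main theorem applied to $\Cart(\QCoh(X),F^*)$, with Kunz's theorem supplying exactness of $F^*$ from regularity of $X$, and both use the adjunction $F^*\dashv F_*$ to pass between the Cartier and Frobenius descriptions. The only caveat is that your ``definitional identification'' $\Frob(X)\simeq\Cart(\QCoh(X),F^*)$ is not definitional --- the paper defines $\Frob(X)$ via structure maps $M\to F_*M$, so this step is precisely \Cref{cartadjoint} (which the paper instead invokes after deriving, at the level of $\D(X)$), and that corollary, rather than the Kunz verification, is its actual content.
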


\subsection*{Monadicity of Cartier modules}
As a main ingredient for the proof of \Cref{mainthm} we prove the following result which could be of independent interest.

\begin{introthm}[\Cref{monadicity}]
    Let $\C$ be an $\infty$-category that admits countable coproducts and let $F\colon \C \to \C$ be an endofunctor that preserves them. Then the forgetful functor $U\colon \Cart(\C,F)\to \C$ exhibits $\Cart(\C,F)$ as monadic over $\C$ (in the sense of \cite[Definition 4.7.3.4]{HA}).
\end{introthm}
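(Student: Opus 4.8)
The plan is to verify the three hypotheses of the Barr--Beck--Lurie monadicity theorem \cite[Theorem 4.7.3.5]{HA}: that $U$ admits a left adjoint, that $U$ is conservative, and that $\Cart(\C,F)$ admits geometric realizations of $U$-split simplicial objects and that $U$ preserves them. Throughout I would exploit the description in \Cref{DefCart}: the functor $U = U_{\C}$ is the base change of $(\ev_0,\ev_1)\colon \Arr(\C)\to\C\times\C$ along $(F,\id_{\C})\colon\C\to\C\times\C$, so structural properties of $(\ev_0,\ev_1)$ transfer to $U$.

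Conservativity is the most formal point. Since equivalences in $\Arr(\C)=\Fun(\Delta^1,\C)$ are detected pointwise, i.e.\ by $\ev_0$ and $\ev_1$, the functor $(\ev_0,\ev_1)$ is conservative; as the base change of a conservative functor is again conservative, $U$ is conservative.

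The crux is the construction of the left adjoint $L$. I would guess the free Cartier module on $X\in\C$ to be $L X \coloneqq \coprod_{n\geq0} F^{n} X$, equipped with the structure map $F(LX)\to LX$ that, under the canonical identification $F\bigl(\coprod_{n\geq0}F^{n} X\bigr)\simeq \coprod_{n\geq1}F^{n} X$ (valid since $F$ preserves countable coproducts), is the $\shift$, i.e.\ the inclusion of $\coprod_{n\geq1}$ into $\coprod_{n\geq0}$. To show that this corepresents $\Map_{\C}(X, U(-))$ — and hence produces a left adjoint by the pointwise criterion for the existence of adjoints \cite[Proposition 5.2.4.2]{HTT} — I would compute mapping spaces in the pullback directly. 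For $(N,\kappa_{N})\in\Cart(\C,F)$, the space $\Map_{\Cart(\C,F)}(LX,(N,\kappa_{N}))$ is the equalizer of the two maps
\[ \prod_{n\geq0}\Map_{\C}(F^{n} X, N) \rightrightarrows \prod_{m\geq1}\Map_{\C}(F^{m} X, N) \]
induced by $\kappa_{LX}=\shift$ and by $\kappa_{N}\circ F(-)$: the first forgets the $n=0$ factor, while the second sends a family $(f_{n})_{n}$ to $(\kappa_{N}\circ F(f_{m-1}))_{m\geq1}$. A point of the equalizer is thus a family with $f_{m}\simeq\kappa_{N}\circ F(f_{m-1})$ for all $m\geq1$, so the projection onto the $n=0$ factor is an equivalence, the remaining compatible data forming a contractible (iterated path) space. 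This yields $\Map_{\Cart(\C,F)}(LX,(N,\kappa_{N}))\simeq \Map_{\C}(X, U(N,\kappa_{N}))$, realized by composition with the unit $X\hookrightarrow LX$ (the $n=0$ inclusion). I expect this coherent mapping-space computation to be the main obstacle, since one must control all the higher homotopies packaged in the equalizer rather than merely the naive induction on components.

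Finally, for the colimit condition I would again use that $\Cart(\C,F)$ is a pullback along functors preserving the relevant colimits. Given a $U$-split simplicial object $M_{\bullet}$, its image $U(M_{\bullet})$ is a split simplicial object in $\C$, hence admits a geometric realization which is an absolute colimit and is therefore preserved by $F$, and so by $(F,\id_{\C})$. Likewise $\kappa_{\C}(M_{\bullet})$ is a simplicial object in $\Arr(\C)$ whose pointwise realizations, namely $|U(M_{\bullet})|$ and $F(|U(M_{\bullet})|)$, exist; it therefore admits a realization computed pointwise and preserved by $(\ev_0,\ev_1)$. By the standard description of colimits in a fiber product of $\infty$-categories along functors preserving the colimits in question, $M_{\bullet}$ then admits a geometric realization in $\Cart(\C,F)$ which is preserved by $U$. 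Combining the three points, \cite[Theorem 4.7.3.5]{HA} shows that $U$ exhibits $\Cart(\C,F)$ as monadic over $\C$.
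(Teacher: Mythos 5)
Your proposal is correct and follows essentially the same route as the paper: the same free object $LX=\coprod_{n\geq0}F^nX$ with the shift structure map, the same identification of $\Map_{\Cart(\C,F)}(LX,N)$ as the equalizer of the "forget the $0$-th factor" and "$\kappa_N\circ F(-)$" maps on $\prod_{n\geq0}\Map_\C(F^nX,N)$ (the paper finishes this step by writing down an explicit inverse to $\pr_0$ and checking the identities componentwise by induction, which is the rigorous version of your contractibility remark), and the same observation that $U$-split simplicial objects have absolute colimits in $\C$, hence are preserved by $F$ and lift to the pullback.
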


Recall that a monadic functor $U \colon \C \to \D$ abstracts the setting of the forgetful functor $U \colon \Mod_R \to \operatorname{Set}$
for a ring $R$. Precisely, a monadic functor $U$ is a conservative functor with a left adjoint (in the case of modules this is given by the free module functor),
such that $U$ commutes with certain sifted colimits.
Monadic functors are related to monads (i.e.\ monoid objects in an ($\infty$-)category of endofunctors with respect to the monoidal structure given by composition)
in the following way: Every adjunction $L \dashv R$ gives rise to a monad $RL$, with 
multiplication given by $RL \circ RL = R \circ LR \circ L \xrightarrow{\operatorname{counit}} RL$,
and unit $\operatorname{id} \xrightarrow{\operatorname{unit}} RL$. In particular, every monadic functor $U$ gives rise to a monad $UL$.
The Barr-Beck Theorem (cf.\ \cite[Theorem 4.7.3.5]{HA} for its formulation for $\infty$-categories) provides a partial converse to this construction: every monad $T \colon \C \to \C$ has a unique (up to contractible choice)
monadic functor $U \colon \D \to \C$ with left adjoint $L$ such that $T \simeq UL$.

In the proof of \Cref{mainthm}, we first show that both $\infty$-categories in question are monadic over the base $\infty$-category,
and then prove that the corresponding monads agree, thus giving an equivalence of the module-categories.

\subsection*{The perverse t-structure on Cartier modules}

Recall that there is a perverse t-structure on $\D(\Mod(X))$ for $X$ a Noetherian scheme, cf.\ \cite{BBD, Gabber, AB, bhatt2023applications}. 
Baudin constructed a corresponding perverse t-structure on
the $\infty$-category $\D^b(\Cart(\operatorname{Coh}(X), F_*))$ under the additional assumption that the absolute Frobenius on $X$ is finite \cite[Definition 5.2.1]{Baudin}. 
We use \Cref{mainthm} to give a more conceptual construction, that also removes this finiteness assumption.

\begin{introthm}[\Cref{perversethm,baudin}]
    Let $X$ be a Noetherian $\FF_p$-scheme.
    The perverse t-structure on $\D(\Mod(X))$ induces a perverse t-structure on $\D(\Cart(\Mod(X), F_*))$ such that the forgetful functor is t-exact for these t-structures.

    This t-structure coincides with the t-structure of \cite[Definition 5.2.1]{Baudin} in the case 
    where the absolute Frobenius of $X$ is finite, in the sense that the canonical functor 
    \begin{equation*}
        \D^b(\Cart(\operatorname{Coh}(X), F_*)) \to \D(\Cart(\Mod(X), F_*))
    \end{equation*}
    is t-exact.
\end{introthm}

\subsection*{Outline}

In \Cref{chap1} we define quite generally the $\infty$-category $\LEq(F,G)$ for functors $F,G\colon \C \to\D$; and its incarnations $\Cart(\C,F)$ and $\Frob(\C,F)$. Moreover, we show that properties of $\C$ and $\D$ like stability or presentability, together with appropriate conditions on $F$ and $G$, imply the same properties for $\LEq(F,G)$ (resp.\ for $\Cart(\C,F)$ or $\Frob(\C,F)$) and the forgetful functor ${U\colon \LEq(F,G) \to \C}$, cf.\ \Cref{leqproperties}. In particular, we prove the first part of \Cref{mainthm} that states that $\Cart(\A,F)$ is a Grothendieck abelian category.

In \Cref{chap2}, if $\C$ and $\D$ are stable $\infty$-categories with t-structures such that the functors $F,G\colon \C \to \D$ are exact and $F$ is right t-exact and $G$ is left t-exact, we introduce a t-structure on $\LEq(F,G)$ that is characterized by the property that $U \colon \LEq(F,G) \to \C$ is t-exact, cf.\ \Cref{tstructure}. Moreover, we identify the heart as $\LEq(F^{\heart},G^{\heart})$ in \Cref{heart}.

In \Cref{chap3} we explicitly construct the left adjoint of the forgetful functor $U\colon \Cart(\C,F) \to \C$ and derive t-exactness results about it, cf.\ \Cref{Lheart}. Moreover, we show that the existence of a left adjoint of $U$ implies that $U$ exhibits $\Cart(\C,F)$ as monadic over $\C$ if $\C$ is a presentable $\infty$-category and $F$ preserves colimits, cf.\ \Cref{monadicity}. This is one of the key ingredients for the proof of \Cref{mainthm}. 

In \Cref{chap4} we state and prove our main theorem, cf.\ \Cref{derived}. Roughly, by a consequence of the $\infty$-categorical Barr-Beck Theorem \cite[Theorem 4.7.3.5]{HA} it is enough to check the following:
\begin{enumerate}[label=(\arabic*)]
    \item\label{introbb1} The functor $\DU\colon \D(\Cart(\A,F)) \to \D(\A)$ induced by the forgetful functor $U_{\A}$ on $\Cart(\A,F)$ exhibits $\D(\Cart(\A,F))$ as monadic over $\D(\A)$.
    \item\label{introbb2} The forgetful functor $\UD\colon \Cart(\D(\A),\D(F)) \to \D(\A)$ exhibits the $\infty$-category $\Cart(\D(\A),\D(F))$ as monadic over $\D(\A)$.
    \item\label{introbb3} Denote by $\DL$ and $\LD$ the left adjoints of the functors above. Then the natural map $\UD\LD(M) \to \DU\DL(M)$ is an equivalence for each object $M$ of $\D(\A)$.
\end{enumerate}
The proof of \ref{introbb1} is an application of a more general result about induced functors on derived $\infty$-categories (\Cref{Dmonadic}) once we know that $U_{\A}$ exhibits $\Cart(\A,F)$ as monadic over $\A$. But this, as well as \ref{introbb2}, is already shown in \Cref{chap3}. The key point in the proof of \ref{introbb3} is a reduction to the case where $M$ is an object in the heart $\D(\A)^{\heart}$. To do so, we use the t-exactness properties of the forgetful functor and its left adjoint that we showed in the previous sections.

In \Cref{chap5} we specialize the discussion to the case where $\A$ is the category of $\O_X$-modules over a Noetherian $\FF_p$-scheme $X$. In particular, we construct a perverse t-structure on $\D(\Cart(\Mod(X),F_*))$.

Furthermore, in \Cref{appendix} we collect some essentially well-known results about the (unbounded) derived $\infty$-category $\D(\A)$ of a Grothendieck abelian category $\A$.

\subsection*{Notation}
We freely use the language of $\infty$-categories as developed in \cite{HTT, HA}. In particular, we regard an (ordinary) category as an $\infty$-category via the nerve construction. Moreover, we denote by $\Cat_{\infty}$ the (very large) $\infty$-category of (large) $\infty$-categories.

\subsection*{Acknowledgements}
We thank Manuel Blickle for suggesting the topic. We thank Tom Bachmann, Manuel Blickle, Anton Engelmann, Lorenzo Mantovani and Luca Passolunghi for helpful discussions and reading a draft of the article.

The authors acknowledge support by the Deutsche Forschungsgemeinschaft
(DFG, German Research Foundation) through the Collaborative Research Centre TRR 326 \textit{Geometry and Arithmetic of Uniformized Structures}, project number 444845124.
    \section{Definition of Cartier modules}\label{chap1}

Let $\C$ be an $\infty$-category and $F\colon \C \to \C$ an endofunctor. In this section we define the $\infty$-category $\Cart(\C,F)$ of Cartier modules in $\C$ with respect to the functor $F$ which is a generalization of the usual notion of Cartier modules. Moreover, we investigate how certain properties of $\C$, like stability or presentability, induce the same properties on $\Cart(\C,F)$. We do the same for the $\infty$-category $\Frob(\C,F)$ of Frobenius modules in $\C$ with respect to $F$ and for the more general concept of lax equalizers.

As $\Cart(\C,F)$ and $\Frob(\C,F)$ are special cases of lax equalizers recall the following definition.

\begin{Definition}[{\cite[Definition II.1.4]{TC}}]
    Let $\C$ and $\D$ be $\infty$-categories and $F,G\colon \C \to \D$ be functors. The \textit{lax equalizer of $F$ and $G$} is the $\infty$-category $\LEq(F,G)$ defined as the pullback
    \[\begin{tikzcd}
        \LEq(F,G) \ar{r}{U} \ar{d}[swap]{\kappa} & \C \ar{d}{(F,G)} \\
        \Arr(\D) \ar{r}[swap]{(\ev_0,\ev_1)} & \D \times \D
    \end{tikzcd}\]
    in $\Cat_{\infty}$, where $\Arr(\D)\coloneqq\Fun(\Delta^1,\D)$ is the $\infty$-category of arrows in $\D$ and the functors $\ev_0, \ev_1\colon \Fun(\Delta^1,\D) \to \D$ are induced by the inclusions $\{0\} \hookrightarrow \Delta^1$ and $\{1\} \hookrightarrow \Delta^1$, respectively.
\end{Definition}

\begin{Notation}
    Throughout the paper, in the situation above, we denote the natural maps by $U\colon \LEq(F,G) \to \C$ and $\kappa\colon \LEq(F,G) \to \Arr(\D)$, respectively. We refer to the functor $U$ as the \emph{forgetful functor}.
\end{Notation}

\begin{Remark}\label{leqsimplices}
    Using the universal property of the pullback we can describe the objects and morphisms of the $\infty$-category $\LEq(F,G)$:
\begin{enumerate}[label=(\alph*)]
    \item Objects of $\LEq(F,G)$ are given by pairs $(c,f)$ of an object $c$ of $\C$ and an arrow $f\colon F(c)\to G(c)$ in $\D$.
    \item A morphism $\varphi \colon (c, f) \to (d, g)$ in $\LEq(F,G)$ is given by the data of a morphism $U\varphi \colon c \to d$ in $\C$
    and a commutative diagram 
    \begin{center}
        \begin{tikzcd}
            F(c) \ar[r, "{F(U\varphi)}"] \ar[d, "f"'] &F(d) \ar[d, "g"] \\
            G(c) \ar[r, "{G(U\varphi)}"'] &G(d)\rlap{.}
        \end{tikzcd}
    \end{center} 
\end{enumerate}
    A formula for mapping spaces in $\LEq(F, G)$ can be found in \Cref{leqproperties}\ref{leqmap}.
\end{Remark}

\begin{Definition}\label{DefCart}
    Let $\C$ be an $\infty$-category and $F\colon \C \to \C$ be an endofunctor.
    We define the $\infty$-category $\Cart(\C,F)$ of \textit{Cartier modules in $\C$ with respect to the functor $F$} as $\LEq(F,\id_{\C})$ and the $\infty$-category $\Frob(\C,F)$ of \textit{Frobenius modules in $\C$ with respect to the functor $F$} as $\LEq(\id_{\C},F)$.
\end{Definition}

By \Cref{leqsimplices} we can describe the objects and morphisms of $\Cart(\C,F)$ and $\Frob(\C,F)$.

\begin{Remark}
    Let $\C$ be an $\infty$-category and $F\colon \C \to \C$ be an endofunctor.
\begin{enumerate}[label=(\alph*)]
    \item Objects of the $\infty$-category $\Cart(\C,F)$ are given by pairs $(M,\kappa_M)$ where $M \in \C$ is an object of $\C$ and $\kappa_M\colon F(M) \to M$ is a morphism in $\C$.
    
    In particular, this generalizes the usual notion of Cartier modules on an $\FF_p$-scheme $X$ (cf.\ \cite[Definition 2.1]{BBCartmod}) that are pairs $(M,\kappa_M)$ of an $\O_X$-module $M$ together with an $\O_X$-linear morphism $\kappa_M\colon F_{\ast}M \to M$ where $F\colon X\to X$ denotes the absolute Frobenius endomorphism of $X$.
    \item Similarly, objects of the $\infty$-category $\Frob(\C,F)$ are given by pairs $(M,\tau_M)$ where $M \in \C$ is an object of $\C$ and $\tau_M\colon M \to F(M)$ is a morphism in $\C$.
    
    This generalizes the usual notion of Frobenius modules on an $\FF_p$-scheme $X$ (cf.\ \cite[Remark 1.3.2]{BL}) that are pairs $(M,\tau_M)$ of an $\O_X$-module $M$ together with an $\O_X$-linear morphism $\tau_M\colon M \to F_{\ast}M$ where $F\colon X\to X$ denotes the absolute Frobenius endomorphism of $X$.
    \item A morphism $\varphi \colon (M, \kappa_M) \to (N, \kappa_N)$ in $\Cart(\C,F)$ is given by the data of a morphism $U\varphi \colon M \to N$ in $\C$
    and a commutative diagram 
    \begin{center}
        \begin{tikzcd}
            F(M) \ar[r, "{F(U\varphi)}"] \ar[d, "\kappa_M"'] &F(N) \ar[d, "\kappa_N"] \\
            M \ar[r, "U\varphi"'] &N\rlap{,}
        \end{tikzcd}
    \end{center}
    and similarly for $\Frob(\C, F)$.
\end{enumerate}
\end{Remark}

The following proposition collects a number of useful facts about this construction.

\begin{Proposition}\label{leqproperties}
    Let $\C$ and $\D$ be $\infty$-categories and $F,G\colon \C \to \D$ be functors.
    \begin{enumerate}[label=(\alph*)]
        \item\label{leqmap} Let $M, N \in \LEq(F,G)$. The mapping space $\Map_{\LEq(F,G)}(M,N)$ is given by the equalizer
        \[\begin{tikzcd}
            \Eq \Bigg(\Map_{\C}(UM,UN) \ar[shift left=0.75ex]{rr}{\kappa(M)^*G} \ar[shift right=0.75ex]{rr}[swap]{\kappa(N)_*F} &&[-10 pt] \Map_{\D}(FUM,GUN)\Bigg) \rlap{.}
        \end{tikzcd}\]
        \item\label{leqUconservative} The forgetful functor $U\colon \LEq(F,G) \to \C$ is conservative, i.e.\ a morphism $f$ in $\LEq(F,G)$ is an equivalence if and only if $U(f)$ is an equivalence in $\C$.
        \item\label{leqlimits} Let $p\colon K \to \LEq(F,G)$ be a diagram such that the composite diagram ${K \to \LEq(F,G) \to \C}$ admits a limit and this limit is preserved by the functor $G$. Then $p$ admits a limit and the functor $U\colon \LEq(F,G) \to \C$ preserves this limit.
        \item\label{leqcolim} Let $p\colon K \to \LEq(F,G)$ be a diagram such that the composite diagram ${K \to \LEq(F,G) \to \C}$ admits a colimit and this colimit is preserved by the functor $F$. Then $p$ admits a colimit and the functor $U\colon \LEq(F,G) \to \C$ preserves this colimit.
        \item\label{leqexact} If $\C$ and $\D$ are stable $\infty$-categories (cf.\ \cite[Definition 1.1.1.9]{HA}) and $F$ and $G$ are exact, then $\LEq(F,G)$ is a stable $\infty$-category and the functor $U\colon \LEq(F,G) \to \C$ is exact.
        \item\label{leqcolimits} If $\C$ is a presentable $\infty$-category (cf.\ \cite[Definition 5.5.0.1]{HTT}), $\D$ is an accessible $\infty$-category (cf.\ \cite[Definition 5.4.2.1]{HTT}), $F$ preserves colimits and $G$ is accessible (cf.\ \cite[Definition 5.4.2.5]{HTT}), then $\LEq(F,G)$ is a presentable $\infty$-category and the functor $U\colon \LEq(F,G) \to \C$ preserves colimits.
        \item\label{leqadditive} If $\C$ and $\D$ are additive $\infty$-categories (cf.\ \cite[Definition 2.6]{GGN}) and $F$ and $G$ are additive, then $\LEq(F,G)$ is an additive $\infty$-category and the functor $U\colon \LEq(F,G) \to \C$ is additive.
        \item\label{leqabelian} If $\C$ and $\D$ are abelian categories and $F$ and $G$ are exact, then $\LEq(F,G)$ is an abelian category and the functor $U\colon \LEq(F,G) \to \C$ is exact.
        \item\label{leqGrothab} If $\C$ and $\D$ are Grothendieck abelian categories (i.e.\ they are presentable abelian and filtered colimits are exact) and $F$ and $G$ are exact and preserve colimits, then $\LEq(F,G)$ is a Grothendieck abelian category and the functor $U\colon \LEq(F,G) \to \C$ is exact and preserves colimits.
    \end{enumerate}
\end{Proposition}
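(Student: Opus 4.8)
The plan is to derive every part from two foundational computations: the mapping-space formula \ref{leqmap} and the explicit descriptions of limits and colimits in \ref{leqlimits} and \ref{leqcolim}. Both are obtained by unwinding the defining pullback square together with the standard behaviour of mapping spaces and (co)limits in the arrow $\infty$-category $\Arr(\D)$; once they are available, the structural assertions \ref{leqexact}--\ref{leqGrothab} follow formally by testing universal properties through the conservative functor $U$. I would begin with \ref{leqmap}: since $\LEq(F,G)$ is a pullback in $\Cat_{\infty}$, its mapping spaces are the pullback of the mapping spaces of $\C$ and $\Arr(\D)$ over those of $\D \times \D$, and combining this with the description of a mapping space in $\Arr(\D) = \Fun(\Delta^1,\D)$ as a space of commutative squares, namely $\Map_{\D}(FUM,FUN) \times_{\Map_{\D}(FUM,GUN)} \Map_{\D}(GUM,GUN)$, collapses the whole pullback to the asserted equalizer. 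Statement \ref{leqUconservative} is then immediate: the two projections out of a pullback of $\infty$-categories are jointly conservative, and $(\ev_0,\ev_1)\colon \Arr(\D) \to \D \times \D$ is conservative since an arrow in $\Arr(\D)$ is an equivalence if and only if it is a pointwise equivalence. Hence if $U(f)$ is an equivalence, so is $(\ev_0,\ev_1)\kappa(f) = (F,G)U(f)$, forcing $\kappa(f)$ and therefore $f$ to be equivalences.

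For \ref{leqlimits} and \ref{leqcolim} I would avoid manipulating coherences by hand and argue through \ref{leqmap}. Given $p\colon K \to \LEq(F,G)$ with $p(k) = (c_k, f_k\colon Fc_k \to Gc_k)$ such that $c \coloneqq \lim_k c_k$ exists in $\C$ and is preserved by $G$, define $f\colon Fc \to Gc = \lim_k Gc_k$ via the maps $Fc \to Fc_k \xrightarrow{f_k} Gc_k$. By \ref{leqmap}, for any test object $\Map_{\LEq}(-,(c,f))$ is the equalizer of $\Map_{\C}(-,c)$ and $\Map_{\D}(F-,Gc)$; equalizers are finite limits, so they commute with $\lim_k$, and the identifications $\Map_{\C}(-,c) = \lim_k \Map_{\C}(-,c_k)$ together with $\Map_{\D}(F-,Gc) = \lim_k \Map_{\D}(F-,Gc_k)$ (the latter using that $G$ preserves the limit) exhibit $(c,f)$ as $\lim_k p(k)$, with $U$ visibly preserving it. The dual argument gives \ref{leqcolim}, mapping out of the colimit and using that $F$ preserves it, so that $\Map_{\D}(Fc,G-) = \lim_k \Map_{\D}(Fc_k,G-)$.

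The next three parts are formal consequences. For \ref{leqexact}, the object $(0,0)$ is a zero object and \ref{leqlimits}, \ref{leqcolim} supply all finite limits and colimits (exactness of $F$ and $G$ gives the needed preservation); since the conservative $U$ preserves finite limits and colimits it detects pullbacks and pushouts, so stability of $\C$ forces the two classes of squares to coincide in $\LEq(F,G)$ (equivalently, the defining square is a limit in $\Cat_{\infty}$ of stable $\infty$-categories along exact functors, and such limits are stable). For \ref{leqcolimits}, $\D \times \D$ and $\Arr(\D)$ are accessible and the functors $(F,G)$ and $(\ev_0,\ev_1)$ are accessible, so the pullback $\LEq(F,G)$ is accessible (cf.\ \cite[\S 5.4.6]{HTT}); by \ref{leqcolim} it is moreover cocomplete, hence presentable (an accessible $\infty$-category with all small colimits is presentable), and $U$ preserves colimits. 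Additivity \ref{leqadditive} follows by checking semiadditivity through the conservative, (co)limit-preserving $U$, and by noting via \ref{leqmap} that each $\Map_{\LEq}(M,N)$ is an equalizer of grouplike $E_{\infty}$-spaces along homomorphisms, hence grouplike.

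Finally, \ref{leqabelian} and \ref{leqGrothab} are the $1$-categorical shadows of the above. When $\C$ and $\D$ are ordinary categories the equalizer of \ref{leqmap} is a set, so $\LEq(F,G)$ is again a $1$-category; kernels and cokernels exist and are computed by the exact, conservative functor $U$, which forces the canonical map $\coim \to \im$ to be an isomorphism because it is one in $\C$, giving \ref{leqabelian}. For \ref{leqGrothab}, presentability comes from \ref{leqcolimits}, and exactness of filtered colimits is inherited from $\C$ since $U$ preserves filtered colimits and reflects exactness. I expect the only genuine obstacle to be foundational: pinning down \ref{leqmap} in a homotopy-coherent way, in particular correctly identifying the two parallel maps of the equalizer with the pre- and post-composition operations coming from $\kappa(M)$ and $\kappa(N)$. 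Once \ref{leqmap}, \ref{leqlimits} and \ref{leqcolim} are secured, each of \ref{leqexact}--\ref{leqGrothab} reduces to a short verification through $U$.
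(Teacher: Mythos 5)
Your proposal is correct and follows essentially the same route as the paper: the structural statements \ref{leqexact}--\ref{leqGrothab} are deduced exactly as in the paper by testing the relevant universal properties and canonical comparison maps through the conservative, (co)limit-preserving forgetful functor $U$, while for \ref{leqmap}--\ref{leqcolimits} you supply the standard pullback/mapping-space arguments that the paper simply outsources to \cite[Proposition II.1.5]{TC}. The only cosmetic difference is in \ref{leqadditive}, where you verify grouplikeness of the mapping spaces directly from the equalizer formula rather than checking that the shear map becomes an equivalence after applying $U$; both are valid characterizations of additivity.
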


\begin{proof}
    Parts \ref{leqmap} to \ref{leqcolimits} are \cite[Proposition II.1.5]{TC}. Note that \ref{leqcolim} follows from their proof of \ref{leqcolimits}.
    
    For the proof of the remaining parts first note that all the claims about the functor $U$ immediately follow from \ref{leqlimits} and \ref{leqcolim}.
    
    Using this there are only two things left to show in \ref{leqadditive}: that finite coproducts and finite products in $\LEq(F,G)$ are equivalent and that for each object $M$ of $\LEq(F,G)$ the shear map 
    \[M\oplus M \xrightarrow{\left(\begin{smallmatrix} 1&1\\0&1 \end{smallmatrix}\right)} M\oplus M\] 
    is an equivalence. But by \ref{leqUconservative} both of these can be checked after applying $U$ where it is true because $\C$ is additive and $U$ preserves finite coproducts and finite products (and in particular the shear map). This proves \ref{leqadditive}. 
    
    A similar argument, using finite colimits and finite limits instead of finite (co)products, and the comparison map between the coimage and the image, shows \ref{leqabelian} (it is clear that a limit of $1$-categories is still a $1$-category, see e.g.\ \cite[Proposition 2.3.4.12(4)]{HTT}). 
    
    For \ref{leqGrothab} we have to show that $\LEq(F,G)$ is presentable and that filtered colimits in $\LEq(F,G)$ are exact. Presentability was already shown in \ref{leqcolimits}. Checking that filtered colimits are exact can again be done after applying the functor $U$ because it is conservative by \ref{leqUconservative}, exact by \ref{leqabelian} and colimit-preserving by \ref{leqcolimits}. As filtered colimits in $\C$ are exact by assumption this completes the proof.
\end{proof}

Applying this with $\D=\C$ and $F$ or $G$ the identity we immediately get analogous results for Cartier modules and Frobenius modules.

\begin{Corollary}\label{properties}
    Let $\C$ be an $\infty$-category and $F\colon \C \to \C$ be an endofunctor.
    \begin{enumerate}[label=(\alph*)]
        \item\label{map} Let $M, N \in \Cart(\C,F)$. The mapping space $\Map_{\Cart(\C,F)}(M,N)$ is given by the equalizer
        \[\begin{tikzcd}
            \Eq \Bigg(\Map_{\C}(UM,UN) \ar[shift left=0.75ex]{rr}{\kappa(M)^*} \ar[shift right=0.75ex]{rr}[swap]{\kappa(N)_*F}  &&[-10 pt] \Map_{\C}(FUM,UN)\Bigg) \rlap{.}
        \end{tikzcd}\]
        \item\label{Uconservative} The forgetful functor $U\colon \Cart(\C,F) \to \C$ is conservative.
        \item\label{limits} Let $p\colon K \to \Cart(\C,F)$ be a diagram such that the composite diagram $K \to \Cart(\C,F) \to \C$ admits a limit. Then $p$ admits a limit and the functor $U\colon \Cart(\C,F) \to \C$ preserves this limit.
        \item\label{colim} Let $p\colon K \to \Cart(\C,F)$ be a diagram such that the composite diagram $K \to \Cart(\C,F) \to \C$ admits a colimit and this colimit is preserved by the functor $F$. Then $p$ admits a colimit and the functor $U\colon \Cart(\C,F) \to \C$ preserves this colimit.
        \item\label{exact} If $\C$ is a stable $\infty$-category and $F$ is exact, then $\Cart(\C,F)$ is a stable $\infty$-category and the functor $U\colon \Cart(\C,F) \to \C$ is exact.
        \item\label{colimits} If $\C$ is a presentable $\infty$-category and $F$ preserves colimits, then $\Cart(\C,F)$ is a presentable $\infty$-category and the functor $U\colon \Cart(\C,F) \to \C$ preserves colimits.
        \item\label{additive} If $\C$ is an additive $\infty$-category and $F$ is additive, then $\Cart(\C,F)$ is an additive $\infty$-category and the functor $U\colon \Cart(\C,F) \to \C$ is additive.
        \item\label{abelian} If $\C$ is an abelian category and $F$ is exact, then $\Cart(\C,F)$ is an abelian category and the functor $U\colon \Cart(\C,F) \to \C$ is exact.
        \item\label{Grothab} If $\C$ is a Grothendieck abelian category and $F$ is exact and preserves colimits, then $\Cart(\C,F)$ is a Grothendieck abelian category and the functor $U\colon \Cart(\C,F) \to \C$ is exact and preserves colimits.
    \end{enumerate}
\end{Corollary}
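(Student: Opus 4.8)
The plan is to deduce every part directly from \Cref{leqproperties} by specializing to $\D = \C$ and $G = \id_{\C}$, since by \Cref{DefCart} we have $\Cart(\C,F) = \LEq(F,\id_{\C})$ and the forgetful functor $U$ of the two constructions literally agree. The only thing to observe is that $\id_{\C}$ automatically satisfies every hypothesis that the relevant part of \Cref{leqproperties} imposes on $G$, so that each condition collapses to a condition on $\C$ and $F$ alone. Concretely, I would first record that $\id_{\C}$ is exact whenever $\C$ is stable or abelian, is additive whenever $\C$ is additive, preserves all limits and all colimits, and is accessible whenever $\C$ is (in particular when $\C$ is presentable).

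With these observations in hand the specializations are mechanical. Part \ref{Uconservative} is immediate from \ref{leqUconservative}; part \ref{exact} follows from \ref{leqexact}, part \ref{additive} from \ref{leqadditive}, part \ref{abelian} from \ref{leqabelian}, and part \ref{Grothab} from \ref{leqGrothab}, in each case because the hypothesis on $G$ is met by $\id_{\C}$. For part \ref{colimits} one additionally uses that a presentable $\infty$-category is in particular accessible, so that the requirement that $\D = \C$ be accessible is satisfied, and that $\id_{\C}$ is accessible; then \ref{leqcolimits} applies. Part \ref{colim} follows from \ref{leqcolim} verbatim, since the condition there is placed on $F$, which plays the same role in both constructions.

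The two places where the identity genuinely simplifies the statement are the limit formula and the mapping space. For part \ref{limits}, note that \ref{leqlimits} requires the limit of the composite diagram to be preserved by $G$; since $G = \id_{\C}$ preserves all limits, this hypothesis is vacuous, which is exactly why \Cref{properties}\ref{limits} carries no condition beyond existence of the limit in $\C$. For the mapping space \ref{map}, I would specialize the equalizer in \ref{leqmap}: setting $G = \id_{\C}$ turns the target $\Map_{\D}(FUM,GUN)$ into $\Map_{\C}(FUM,UN)$ and the map labelled $\kappa(M)^{*}G$ into $\kappa(M)^{*}$, recovering precisely the displayed equalizer. There is no real obstacle here; the entire content lives in \Cref{leqproperties}, and the corollary is a bookkeeping exercise in checking that $\id_{\C}$ meets the hypotheses on $G$. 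The only point worth confirming explicitly is that the forgetful functors of $\LEq(F,\id_{\C})$ and of $\Cart(\C,F)$ coincide, which is immediate from the definitions, so that every statement about $U$ transports without change.
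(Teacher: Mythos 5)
Your proposal is correct and matches the paper's own argument, which likewise obtains the corollary by specializing \Cref{leqproperties} to $\D=\C$ and $G=\id_{\C}$ and noting that the identity functor trivially satisfies every hypothesis placed on $G$. The additional details you spell out (the vacuity of the limit-preservation condition in \ref{limits}, the simplification of the equalizer in \ref{map}) are exactly the routine bookkeeping the paper leaves implicit.
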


\begin{Corollary}\label{frobproperties}
    Let $\C$ be an $\infty$-category and $F\colon \C \to \C$ be an endofunctor.
    \begin{enumerate}[label=(\alph*)]
        \item\label{frobmap} Let $M, N \in \Frob(\C,F)$. The mapping space $\Map_{\Frob(\C,F)}(M,N)$ is given by the equalizer
        \[\begin{tikzcd}
            \Eq \Bigg(\Map_{\C}(UM,UN) \ar[shift left=0.75ex]{rr}{\kappa(M)^*F} \ar[shift right=0.75ex]{rr}[swap]{\kappa(N)_*}  &&[-10 pt] \Map_{\C}(FUM,UN)\Bigg) \rlap{.}
        \end{tikzcd}\]
        \item\label{frobUconservative} The forgetful functor $U\colon \Frob(\C,F) \to \C$ is conservative.
        \item\label{froblimits} Let $p\colon K \to \Frob(\C,F)$ be a diagram such that the composite diagram $K \to \Frob(\C,F) \to \C$ admits a limit and $F$ preserves this limit. Then $p$ admits a limit and the functor $U\colon \Frob(\C,F) \to \C$ preserves this limit.
        \item\label{frobcolim} Let $p\colon K \to \Frob(\C,F)$ be a diagram such that the composite diagram $K \to \Frob(\C,F) \to \C$ admits a colimit. Then $p$ admits a colimit and the functor $U\colon \Frob(\C,F) \to \C$ preserves this colimit.
        \item\label{frobexact} If $\C$ is a stable $\infty$-category and $F$ is exact, then $\Frob(\C,F)$ is a stable $\infty$-category and the functor $U\colon \Frob(\C,F) \to \C$ is exact.
        \item\label{frobcolimits} If $\C$ is a presentable $\infty$-category and $F$ is accessible, then $\Frob(\C,F)$ is a presentable $\infty$-category and the functor $U\colon \Frob(\C,F) \to \C$ preserves colimits.
        \item\label{frobadditive} If $\C$ is an additive $\infty$-category and $F$ is additive, then $\Frob(\C,F)$ is an additive $\infty$-category and the functor $U\colon \Frob(\C,F) \to \C$ is additive.
        \item\label{frobabelian} If $\C$ is an abelian category and $F$ is exact, then $\Frob(\C,F)$ is an abelian category and the functor $U\colon \Frob(\C,F) \to \C$ is exact.
        \item\label{frobGrothab} If $\C$ is a Grothendieck abelian category and $F$ is exact and preserves colimits, then $\Frob(\C,F)$ is a Grothendieck abelian category and the functor $U\colon \Frob(\C,F) \to \C$ is exact and preserves colimits.
    \end{enumerate}
\end{Corollary}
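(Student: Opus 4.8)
The key observation is that $\Frob(\C,F)=\LEq(\id_\C,F)$ by \Cref{DefCart}, so every assertion is obtained by specializing \Cref{leqproperties} to the case $\D=\C$ with the \emph{first} functor taken to be $\id_\C$ and the \emph{second} functor taken to be $F$. My plan is therefore to go through the nine parts and read off each statement from the corresponding part of \Cref{leqproperties} under this substitution. For instance, the mapping-space formula \ref{frobmap} is the instance of \ref{leqmap} in which the two parallel maps $\kappa(M)^*G$ and $\kappa(N)_*F$ specialize to $\kappa(M)^*F$ and $\kappa(N)_*$, respectively; and conservativity \ref{frobUconservative} is literally \ref{leqUconservative}.

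The one genuinely asymmetric point --- and the only place deserving care --- is the swap between limits and colimits. In \Cref{leqproperties} the preservation of limits is governed by the \emph{second} functor (cf.\ \ref{leqlimits}) while the preservation of colimits is governed by the \emph{first} functor (cf.\ \ref{leqcolim}). Under our substitution the second functor is $F$ and the first is $\id_\C$, which explains why \ref{froblimits} carries the hypothesis that $F$ preserve the limit in question, whereas \ref{frobcolim} carries no hypothesis at all, since $\id_\C$ preserves every colimit. I would make sure this bookkeeping is transported correctly and not inadvertently dualized.

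The remaining parts then follow from the same substitution, using that $\id_\C$ automatically satisfies every hypothesis that \Cref{leqproperties} imposes on the first functor: it is exact, additive, colimit-preserving, and accessible, and $\D=\C$ is in particular accessible. Consequently \ref{frobexact}, \ref{frobcolimits}, \ref{frobadditive}, \ref{frobabelian}, and \ref{frobGrothab} are precisely \ref{leqexact}, \ref{leqcolimits}, \ref{leqadditive}, \ref{leqabelian}, and \ref{leqGrothab}, with the only surviving conditions being those placed on the second functor $F$. There is no real obstacle here: the entire content is already contained in \Cref{leqproperties}, and the proof amounts to verifying that the identity functor occupies the slot whose hypotheses are vacuous.
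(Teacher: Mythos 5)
Your proposal is correct and is exactly the paper's argument: the paper obtains \Cref{frobproperties} by specializing \Cref{leqproperties} to $\Frob(\C,F)=\LEq(\id_\C,F)$, with $\id_\C$ occupying the slot whose hypotheses are vacuous and with precisely the limit/colimit bookkeeping you describe. Incidentally, carrying out your substitution in \ref{leqmap} shows the equalizer in \ref{frobmap} should land in $\Map_{\C}(UM,FUN)$ rather than the printed $\Map_{\C}(FUM,UN)$, which appears to be a typo inherited from the Cartier case.
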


We conclude this section by investigating the relation between Cartier modules and Frobenius modules with respect to adjoint functors.

\begin{Proposition}\label{leqadjoint}
    Let $\C$ and $\D$ be $\infty$-categories and $F,G\colon \C \to \D$ be functors. If $F$ has a right adjoint $R\colon\D\to \C$ then there is an equivalence of $\infty$-categories 
    \[\LEq(F,G) \simeq \LEq(\id_{\C},RG) = \Frob(\C,RG) \, .\]
    If $G$ has a left adjoint $L\colon\D\to\C$ then there is an equivalence of $\infty$-categories 
    \[\LEq(F,G) \simeq \LEq(LF,\id_{\C}) = \Cart(\C,LF) \, .\]
\end{Proposition}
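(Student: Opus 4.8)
The plan is to reduce both equivalences to a single standard fact: an adjunction induces an equivalence of the associated comma $\infty$-categories, compatibly with their projections. I treat the first statement in detail; the second is symmetric. First I would rewrite the lax equalizer as an iterated pullback. Factoring the map $(F,G)\colon \C \to \D\times\D$ as $\C \xrightarrow{(\id_{\C},G)} \C\times\D \xrightarrow{F\times\id_{\D}} \D\times\D$ and using that pullbacks compose, one identifies
\[ \LEq(F,G) \simeq \C \times_{\C\times\D} \mathcal{C}_F, \qquad \mathcal{C}_F := (\C\times\D)\times_{\D\times\D}\Arr(\D), \]
where $\C\to\C\times\D$ is $(\id_{\C},G)$ and the projection $\mathcal{C}_F \to \C\times\D$ records the pair $(c,d)$ for an object $\alpha\colon Fc\to d$. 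Here $\mathcal{C}_F$ is the comma $\infty$-category of $F$: its fiber over $(c,d)$ is $\Map_{\D}(Fc,d)$, so it is the unstraightening of the functor $\C^{\op}\times\D\to\mathcal{S}$ (with $\mathcal{S}$ the $\infty$-category of spaces) sending $(c,d)\mapsto \Map_{\D}(Fc,d)$. Performing the analogous factorization $(\id_{\C},RG) = (\id_{\C}\times R)\circ(\id_{\C},G)$ for the Frobenius side gives $\Frob(\C,RG)=\LEq(\id_{\C},RG)\simeq \C\times_{\C\times\D}\mathcal{C}'_R$, with the \emph{same} base change $(\id_{\C},G)$, where $\mathcal{C}'_R$ is the comma $\infty$-category of $R$, classified over $\C\times\D$ by $(c,d)\mapsto \Map_{\C}(c,Rd)$.

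The crux is then the comma lemma: since $F\dashv R$, the defining natural equivalence $\Map_{\D}(Fc,d)\simeq \Map_{\C}(c,Rd)$ is natural in $(c,d)\in\C^{\op}\times\D$, so the two classifying functors agree and hence $\mathcal{C}_F\simeq\mathcal{C}'_R$ over $\C\times\D$. Base-changing this equivalence along $(\id_{\C},G)\colon\C\to\C\times\D$ yields $\LEq(F,G)\simeq\Frob(\C,RG)$; by construction the equivalence lies over $\C$, i.e.\ is compatible with the forgetful functors $U$. For the second statement I would run the same argument with the factorizations $(F,G)=(\id_{\D}\times G)\circ(F,\id_{\C})$ and $(LF,\id_{\C})=(L\times\id_{\C})\circ(F,\id_{\C})$, reducing now to the comma equivalence attached to the adjunction $L\dashv G$, namely $\Map_{\C}(Ld,c)\simeq\Map_{\D}(d,Gc)$ natural in $(d,c)$, and base-changing along $(F,\id_{\C})\colon\C\to\D\times\C$.

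The main obstacle is purely $\infty$-categorical coherence: on objects the equivalence is simply ``take the adjunct of the structure map'' (send $\alpha\colon Fc\to Gc$ to $R\alpha\circ\eta_c\colon c\to RGc$, using the unit $\eta$), but promoting this to an equivalence of $\infty$-categories requires all the higher homotopies. The comma-category formulation is precisely what resolves this, since the equivalence of comma $\infty$-categories over $\C\times\D$ is produced by straightening/unstraightening from the already-coherent natural equivalence of mapping-space functors that is built into the $\infty$-categorical notion of an adjunction; no explicit simplicial manipulation is needed. I would invoke the standard identifications of comma $\infty$-categories and the characterization of adjunctions via mapping spaces from \cite{HTT}.
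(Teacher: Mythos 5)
Your argument is correct, and at its core it runs parallel to the paper's proof: both rewrite $\LEq(F,G)$ via the pasting law for pullbacks as a base change of a ``lax'' construction living over a product of $\infty$-categories, and then import an equivalence of that construction induced by the adjunction. The difference is in the choice of intermediate object and of the key lemma. The paper factors $(F,G)$ through the diagonal $\Delta\colon \C \to \C\times\C$, identifies $\LEq(F,G)$ as the pullback along $\Delta$ of the oriented fiber product $\C \laxtimes{F,\D,G} \C$, and feeds in the adjunction through \cite[Lemma 2.2]{LT}, which gives the equivalence $\C \laxtimes{F,\D,G} \C \simeq \C \laxtimes{\id_{\C},\C,RG} \C$ over $\C\times\C$. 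You instead factor through $(\id_{\C},G)\colon \C\to\C\times\D$ and reduce to the comma $\infty$-categories $F\downarrow\D$ and $\C\downarrow R$ over $\C\times\D$, which do not involve $G$ at all; the adjunction then enters in its most primitive form, as the equivalence of these two comma categories (equivalently, of the classifying functors $\Map_{\D}(F(-),-)\simeq\Map_{\C}(-,R(-))$ on $\C^{\op}\times\D$). Your version is slightly more universal --- the Land--Tamme lemma is essentially the $G$-twisted base change of your comma-category statement --- at the cost of not having a single ready-made reference: note that the comma category is a bifibration over $\C\times\D$ (cartesian in the $\C$-direction, cocartesian in the $\D$-direction) rather than a left fibration, so when you invoke straightening you should cite the bifibration formalism and the corresponding characterization of adjunctions (e.g.\ \cite[Section 5.2.1]{HTT}), since the coherent naturality of $\Map_{\D}(Fc,d)\simeq\Map_{\C}(c,Rd)$ in both variables is exactly what that machinery packages. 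With such a reference in place, the proof is complete and the resulting equivalence is, as you note, compatible with the forgetful functors to $\C$.
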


\begin{proof}
    Recall from \cite[Section 2.1]{LT} that the oriented fiber product $\C \laxtimes{F,\D,G} \C$ of $F$ and $G$ is defined as the pullback
    \[\begin{tikzcd}
        \C \laxtimes{F,\D,G} \C \ar{r} \ar{d} & \C \times \C \ar{d}{(F,G)} \\
        \Arr(\D) \ar{r}[swap]{(\ev_0, \ev_1)} & \D \times \D
    \end{tikzcd}\]
    in $\Cat_{\infty}$. Hence, there is a commutative diagram
    \begin{equation}\label{pullbackpaste}
        \begin{tikzcd}
        \LEq(F,G) \ar{r}{U} \ar{d} & \C \ar{d}{\Delta} \\
        \C \laxtimes{F,\D,G} \C \ar{r} \ar{d} & \C \times \C \ar{d}{(F,G)} \\
        \Arr(\D) \ar{r}[swap]{(\ev_0, \ev_1)} & \D \times \D
    \end{tikzcd}
    \end{equation}
    in $\Cat_{\infty}$, where $\Delta$ denotes the diagonal map. By definition of the lax equalizer $\LEq(F,G)$ and the oriented fiber product $\C \laxtimes{F,\D,G} \C$ the outer rectangle and the lower square of (\ref{pullbackpaste}) are cartesian diagrams. Thus, by the pasting law for pullbacks (cf.\ the dual of \cite[Lemma 4.4.2.1]{HTT}) the upper square of (\ref{pullbackpaste}) is also a cartesian diagram.

    If $F$ has a right adjoint functor $R$, by \cite[Lemma 2.2]{LT} there is a commutative diagram
    \[\begin{tikzcd}
        \C \laxtimes{F,\D,G} \C \ar{rr}{\simeq} \ar{dr} && \C \laxtimes{\id_{C},\C,RG} \C \ar{dl} \\
        & \C \times \C &
    \end{tikzcd}\]
    where the horizontal arrow is an equivalence of $\infty$-categories. Combining this with the discussion above we get that
    \begin{align*}
        \LEq(F,G) &\simeq \lim\left(\C \xrightarrow{\Delta} \C\times\C \leftarrow \C \laxtimes{F,\D,G} \C \right) \\
        &\simeq \lim\left(\C \xrightarrow{\Delta} \C\times\C \leftarrow \C \laxtimes{\id_{C},\C,RG} \C \right) \\
        &\simeq \LEq(\id_C, RG) \, .
    \end{align*}
    In the case where $G$ has a left adjoint one argues analogously.
\end{proof}

\begin{Corollary}\label{cartadjoint}
    Let $\C$ be an $\infty$-category and $L,R\colon \C \to \C$ be functors such that $L$ is left adjoint to $R$. 
    Then there is an equivalence of $\infty$-categories 
    \begin{equation*}
        \Cart(\C,L) \simeq \Frob(\C,R).
    \end{equation*}
\end{Corollary}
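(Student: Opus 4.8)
The plan is to unwind the definitions and invoke \Cref{leqadjoint} directly. By \Cref{DefCart} we have $\Cart(\C,L) = \LEq(L,\id_{\C})$ and $\Frob(\C,R) = \LEq(\id_{\C},R)$, so it suffices to produce an equivalence $\LEq(L,\id_{\C}) \simeq \LEq(\id_{\C},R)$.

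This is exactly the first assertion of \Cref{leqadjoint}, applied in the case $\D = \C$, with $F = L$ and $G = \id_{\C}$. Since $L$ is left adjoint to $R$, that proposition yields
\[\LEq(L,\id_{\C}) \simeq \LEq(\id_{\C}, R \circ \id_{\C}) = \LEq(\id_{\C},R),\]
and as $R \circ \id_{\C} = R$ the right-hand side is precisely $\Frob(\C,R)$. Chaining this with the two definitional identifications above gives the desired equivalence $\Cart(\C,L) \simeq \Frob(\C,R)$.

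Because the statement drops out in a single step from \Cref{leqadjoint}, there is no genuine obstacle here; the only care needed is the bookkeeping of which functor plays the role of $F$ and which plays the role of $G$, and the observation that composing with the identity functor leaves $R$ unchanged. One could equally well argue in the reverse direction, starting from $\Frob(\C,R) = \LEq(\id_{\C},R)$ and applying the second assertion of \Cref{leqadjoint} using that $R$ admits the left adjoint $L$; both routes manifestly produce the same equivalence, so no additional compatibility check is required.
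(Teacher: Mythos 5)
Your proof is correct and matches the paper's (implicit) argument: the corollary is stated as an immediate consequence of \Cref{leqadjoint}, obtained exactly as you describe by taking $F=L$, $G=\id_{\C}$ and noting $R\circ\id_{\C}=R$. The only very minor quibble is that the two routes you mention need not ``manifestly'' yield the same equivalence, but since the statement only asserts the existence of an equivalence, this is irrelevant.
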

    \section{The induced t-structure on lax equalizers}\label{chap2}

We fix the following notation for the whole section: 

Let $\C$ be a stable $\infty$-category, cf.\ \cite[Definition 1.1.1.9]{HA}. We denote the shift functor by $\Sigma\colon \C \to \C$. On objects $X\in\C$ it is given by $\Sigma (X) = 0 \amalg_{X} 0$, cf.\ \cite[Notation 1.1.2.7]{HA}.

Moreover, assume that $\C$ carries a t-structure $(\C_{\geq 0}, \C_{\leq 0})$, cf.\ \cite[Definition 1.2.1.1]{HA}. 
This means that the following axioms are satisfied:
\begin{enumerate}[label=(\alph*)]
    \item $\C_{\geq 0}$ and $\C_{\leq 0}$ are subcategories of $\C$. 
    \item For $X \in \C_{\geq 0}$ and $Y \in \C_{\leq 0}$ we have $\Map_{\C} (X, \Sigma^{-1} Y) \simeq 0$.
    \item $\Sigma \C_{\geq 0} \subseteq \C_{\geq 0}$ and $\Sigma^{-1} \C_{\leq 0} \subseteq \C_{\leq 0}$.
    \item For each $X \in \C$ there exists a fiber sequence $X' \to X \to X''$ with $X' \in \C_{\geq 0}$ and $X'' \in \C_{\leq -1}$.
\end{enumerate}
Note that we use homological notation for t-structures. As usual, for each $n$ we denote by $\iotag{n}\colon \C_{\geq n} \rightleftarrows \C \colon\taug{n}$ the adjunction of the inclusion and the connective cover functor and by $\taul{n}\colon \C \rightleftarrows \C_{\leq n} \colon\iotal{n}$ the adjunction of the truncation and the inclusion functor, cf.\ \cite[Proposition 1.2.1.5, Notation 1.2.1.7]{HA}. 
In the following, for simplicity we write $\taug{n}$ for the composition $\iotag{n}\taug{n}$ and similarly for the truncations.
In particular, by \cite[Remark 1.2.1.8]{HA} for every $X \in \C$ there is a fiber sequence
\[ \taug{n} X \to X \to \taul{n-1} X \, .\]
Recall that the heart $\C^{\heart}$ of the t-structure is given by $\C_{\geq 0} \cap \C_{\leq 0}$ and that there are homotopy object functors $\pi_n \coloneqq \taug{0} \taul{0} \Sigma^{-n} \colon \C \to \C^{\heart}$, cf.\ \cite[Definition 1.2.1.11]{HA}.

Note that the $\infty$-category $\Arr(\C)$ is also stable by \cite[Proposition 1.1.3.1]{HA} and carries an induced t-structure given by $\Arr(\C)_{\geq 0} = \Arr(\C_{\geq 0})$ and $\Arr(\C)_{\leq 0} = \Arr(\C_{\leq 0})$. In particular, its heart is given by $\Arr(\C)^{\heart} = \Arr(\C^{\heart})$.

Furthermore, let $\D$ be a stable $\infty$-category and $F,G\colon \C \to \D$ be exact functors. If $\D$ carries a t-structure $(\D_{\geq 0},\D_{\leq 0})$ recall that $F$ is called right t-exact (resp.\ left t-exact) if $F(\C_{\geq 0}) \subseteq \D_{\geq 0}$ (resp.\ $F(\C_{\leq 0}) \subseteq \D_{\leq 0}$).

\ \\

In this section we introduce an induced t-structure on $\LEq(F,G)$. Note that this $\infty$-category is stable by \Cref{leqproperties}\ref{leqexact}. It is defined in such a way that the forgetful functor is automatically t-exact.

\begin{Definition}
    Define full subcategories $\LEq(F,G)_{\geq 0}$ and $\LEq(F,G)_{\leq 0}$ of $\LEq(F,G)$ as follows: An object $M \in \LEq(F,G)$ is in $\LEq(F,G)_{\geq 0}$ (resp.\ $\LEq(F,G)_{\leq 0}$) if and only if $UM$ is in $\C_{\geq 0}$ (resp.\ in $\C_{\leq 0}$).
\end{Definition}

We prove in \Cref{tstructure} that this defines a t-structure on $\LEq(F,G)$ if $F$ is right t-exact and $G$ is left t-exact. For the proof we need the following.

\begin{Lemma}\label{righttexactconncover}
    Let $(\D_{\geq 0},\D_{\leq 0})$ be a t-structure on $\D$ such that $F\colon \C \to \D$ is right t-exact. Then there is a commutative diagram
     \[\begin{tikzcd}
         F\taug{0} \ar{r}{Fc} \ar{d} & F \\
         \taug{0}F \ar{ur}[swap]{cF}
     \end{tikzcd}\]
     where $c\colon \taug{0} \to \id$ denotes the counit of the adjunction.
     
     Dually, if $G\colon\C \to \D$ is left t-exact then there is a commutative diagram
     \[\begin{tikzcd}
         \taug{0}G \ar{r}{cG} \ar{d} & G \\
         G\taug{0} \ar{ur}[swap]{Gc} \rlap{.}
     \end{tikzcd}\]
\end{Lemma}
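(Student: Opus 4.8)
The plan is to exhibit the unlabeled vertical arrow in each triangle as a \emph{lift} of the relevant counit inside the functor $\infty$-category $\Fun(\C,\D)$, where the only obstruction to such a lift is a mapping space that vanishes by the orthogonality axiom of the t-structure on $\D$. Working in $\Fun(\C,\D)$ throughout is what makes the construction natural (rather than merely pointwise), and it produces the commuting homotopy as part of the same datum.

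For the first diagram, I would apply the mapping-space functor $\Map_{\Fun(\C,\D)}(F\iotag{0}\taug{0},-)$ to the connective-cover fiber sequence on $\D$ precomposed with $F$,
\[ \iotag{0}\taug{0}F \xrightarrow{\,cF\,} F \to \iotal{-1}\taul{-1}F \, , \]
which yields a fiber sequence of spaces. The natural transformation $Fc\colon F\iotag{0}\taug{0}\to F$ is a point of the middle space, and it lifts (uniquely up to contractible choice) to a map $\phi\colon F\iotag{0}\taug{0}\to\iotag{0}\taug{0}F$ as soon as the obstruction space $\Map_{\Fun(\C,\D)}(F\iotag{0}\taug{0},\iotal{-1}\taul{-1}F)$ is contractible. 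This is precisely where right t-exactness of $F$ enters: since $\iotag{0}\taug{0}$ lands in $\C_{\geq 0}$ and $F(\C_{\geq 0})\subseteq\D_{\geq 0}$, the source $F\iotag{0}\taug{0}$ takes values in $\D_{\geq 0}$, while the target $\iotal{-1}\taul{-1}F$ always takes values in $\D_{\leq -1}$. As a mapping space in $\Fun(\C,\D)$ is an end over $\C$ of the pointwise mapping spaces $\Map_{\D}(F\iotag{0}\taug{0}X,\iotal{-1}\taul{-1}FY)$, each of which vanishes by orthogonality $\Map_{\D}(\D_{\geq 0},\D_{\leq -1})\simeq 0$, the whole obstruction space is contractible. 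The resulting lift $\phi$ is the desired vertical arrow, and the witnessing homotopy $cF\circ\phi\simeq Fc$ is the commutativity of the triangle.

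The dual statement is proved by the same mechanism, with the two connectivity estimates exchanged. I would now lift the counit $cG\colon\iotag{0}\taug{0}G\to G$ along $Gc\colon G\iotag{0}\taug{0}\to G$, using the fiber sequence obtained by postcomposing the connective-cover sequence on $\C$ with the exact functor $G$,
\[ G\iotag{0}\taug{0} \xrightarrow{\,Gc\,} G \to G\iotal{-1}\taul{-1} \, . \]
Applying $\Map_{\Fun(\C,\D)}(\iotag{0}\taug{0}G,-)$, the obstruction space is now $\Map_{\Fun(\C,\D)}(\iotag{0}\taug{0}G, G\iotal{-1}\taul{-1})$. Its source $\iotag{0}\taug{0}G$ is objectwise connective for free, and this time it is left t-exactness of $G$ that forces the target into $\D_{\leq -1}$: since $G$ is exact it commutes with $\Sigma$, so $G(\C_{\leq -1})=\Sigma^{-1}G(\C_{\leq 0})\subseteq\Sigma^{-1}\D_{\leq 0}=\D_{\leq -1}$. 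The identical orthogonality argument gives contractibility, hence a lift $\phi'\colon\iotag{0}\taug{0}G\to G\iotag{0}\taug{0}$ together with the homotopy $Gc\circ\phi'\simeq cG$.

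The main obstacle is not the existence of the factorization at each object, which is immediate from orthogonality, but ensuring that the chosen lifts assemble into an honest natural transformation of functors. Phrasing the entire argument in $\Fun(\C,\D)$, and using that a natural-transformation space between a $\D_{\geq 0}$-valued and a $\D_{\leq -1}$-valued functor is a limit of contractible pointwise mapping spaces and hence contractible, is exactly what resolves this coherence issue; one must be careful to run the ``fiber sequence of mapping spaces'' step in the functor category rather than objectwise.
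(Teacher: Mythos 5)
Your proof is correct, and it establishes the right statement --- a genuine natural transformation together with a commuting homotopy, not merely an objectwise factorization --- but it reaches it by a different mechanism than the paper. The paper applies $\iotag{0}\taug{0}$ to $Fc$ and observes that right t-exactness forces $\iotal{-1}\taul{-1}F\iotag{0}\taug{0}\simeq 0$, so that $cF\iotag{0}\taug{0}\colon \iotag{0}\taug{0}F\iotag{0}\taug{0}\to F\iotag{0}\taug{0}$ is an equivalence; the vertical arrow is then the explicit composite $\iotag{0}\taug{0}Fc\circ\bigl(cF\iotag{0}\taug{0}\bigr)^{-1}$, and the triangle commutes by naturality of $c$. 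You instead lift $Fc$ through the fiber sequence $\iotag{0}\taug{0}F\to F\to\iotal{-1}\taul{-1}F$ in $\Fun(\C,\D)$, using that the obstruction space $\Map_{\Fun(\C,\D)}(F\iotag{0}\taug{0},\iotal{-1}\taul{-1}F)$ is a limit of pointwise mapping spaces, each contractible by the orthogonality axiom since $F\iotag{0}\taug{0}$ is $\D_{\geq 0}$-valued (this is where right t-exactness enters) and $\iotal{-1}\taul{-1}F$ is $\D_{\leq -1}$-valued. Both arguments use right t-exactness at the same point, but yours trades the paper's explicit formula for a cleaner existence-and-uniqueness statement: the lift is unique up to contractible choice and the commuting homotopy is part of the lifting datum, and your insistence on running the argument in the functor category correctly disposes of the coherence issue. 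The paper's explicit description is marginally more convenient downstream, since the same composite reappears verbatim in the construction of $\kappa_{\geq 0}(M)$ in the proof of \Cref{tstructure}, but either version supports all later uses of the lemma; your handling of the dual statement, including the remark that exactness of $G$ is what places $G\iotal{-1}\taul{-1}$ in $\D_{\leq -1}$, is likewise correct.
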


\begin{proof}
    We only show the first claim. The second claim can be shown by a dual argument. There is a fiber sequence 
    \[\taug{0}F\taug{0} \xrightarrow{c} F\taug{0} \xrightarrow{u} \taul{-1}F\taug{0}\]
    where the last term is $0$ by the right t-exactness of $F$. Hence, the counit induces an equivalence $\taug{0}F\taug{0} \simeq F\taug{0}$.
    
    Moreover, the counit also yields a map $\taug{0}F\taug{0} \xrightarrow{\taug{0}Fc} \taug{0}F$. Putting everything together we obtain a diagram
    \[\begin{tikzcd}
        F\taug{0} \ar{r}{Fc} & F\\
        \taug{0}F\taug{0} \ar{u}{cF\taug{0}}[swap]{\simeq} \ar{d}[swap]{\taug{0}Fc} & \\
        \taug{0}F \ar[bend right]{uur}[swap]{cF}
    \end{tikzcd}\]
    that commutes because of the naturality of $c$.
\end{proof}

\begin{Proposition}\label{tstructure}
    Assume that there is a t-structure $(\D_{\geq 0}, \D_{\leq 0})$ on $\D$ such that $F\colon \C\to\D$ is right t-exact and $G\colon\C\to\D$ is left t-exact. Then the full subcategories $\LEq(F,G)_{\geq 0}$ and $\LEq(F,G)_{\leq 0}$ define a t-structure on $\LEq(F,G)$.

    Moreover, the forgetful functor $U\colon\LEq(F,G)\to\C$ is t-exact.
\end{Proposition}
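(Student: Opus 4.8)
The plan is to verify the four t-structure axioms recalled in the section preamble, using throughout that $\LEq(F,G)$ is stable and $U$ is exact by \Cref{leqproperties}\ref{leqexact}. In particular $U$ commutes with $\Sigma$ and with (co)fibers, and by the very definition of the candidate subcategories, $M$ lies in $\LEq(F,G)_{\geq 0}$ (resp.\ $\LEq(F,G)_{\leq 0}$) exactly when $UM \in \C_{\geq 0}$ (resp.\ $UM \in \C_{\leq 0}$). Granting the four axioms, t-exactness of $U$ will be immediate, since $U$ carries $\LEq(F,G)_{\geq 0}$ into $\C_{\geq 0}$ and $\LEq(F,G)_{\leq 0}$ into $\C_{\leq 0}$ by construction.

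The closure and shift axioms I expect to be formal: because $U\Sigma \simeq \Sigma U$ and $\C_{\geq 0}$, $\C_{\leq 0}$ are stable under $\Sigma$, $\Sigma^{-1}$, the same closure properties hold for $\LEq(F,G)_{\geq 0}$ and $\LEq(F,G)_{\leq 0}$. For the orthogonality axiom, given $X \in \LEq(F,G)_{\geq 0}$ and $Y \in \LEq(F,G)_{\leq 0}$, I would compute $\Map_{\LEq(F,G)}(X, \Sigma^{-1}Y)$ through the equalizer description of \Cref{leqproperties}\ref{leqmap}, whose two entries are $\Map_{\C}(UX, \Sigma^{-1}UY)$ and $\Map_{\D}(FUX, \Sigma^{-1}GUY)$ (using that $U$ and $G$ are exact to commute them past $\Sigma^{-1}$). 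The first space is contractible by the t-structure on $\C$. For the second I would use both standing hypotheses: right t-exactness of $F$ gives $FUX \in \D_{\geq 0}$ and left t-exactness of $G$ gives $GUY \in \D_{\leq 0}$, so this space is contractible by the t-structure on $\D$. An equalizer of a pair of maps between contractible spaces is contractible, giving $\Map_{\LEq(F,G)}(X,\Sigma^{-1}Y) \simeq 0$.

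The real content will be the truncation axiom. Given $M \in \LEq(F,G)$, write $c = UM$ with structure map $\kappa_M \colon Fc \to Gc$, and let $\epsilon \colon \iotag{0}\taug{0}c \to c$ be the counit. I would promote the connective cover to an object $M' = (\iotag{0}\taug{0}c, \kappa_{M'}) \in \LEq(F,G)_{\geq 0}$, defining its structure map as the composite
\[ F\iotag{0}\taug{0}c \to \iotag{0}\taug{0}Fc \xrightarrow{\iotag{0}\taug{0}\kappa_M} \iotag{0}\taug{0}Gc \to G\iotag{0}\taug{0}c, \]
where the outer two arrows are the comparison maps provided by the two halves of \Cref{righttexactconncover}. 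The step I expect to be the main obstacle is checking that $\epsilon$ lifts to a morphism $M' \to M$ in $\LEq(F,G)$, i.e.\ that the square with vertical counits $F\epsilon$, $G\epsilon$ and horizontal maps $\kappa_{M'}$, $\kappa_M$ commutes. This I would establish by a diagram chase combining the naturality of the counit with the two commuting triangles of \Cref{righttexactconncover}, which is exactly where the right/left t-exactness of $F$ and $G$ is consumed. Once this is in hand, I would set $M''$ to be the cofiber of $M' \to M$ in the stable $\infty$-category $\LEq(F,G)$; since $U$ is exact, $UM'' \simeq \cofib(\epsilon) \simeq \iotal{-1}\taul{-1}c \in \C_{\leq -1} \subseteq \C_{\leq 0}$, so $M'' \in \LEq(F,G)_{\leq 0}$ and $M' \to M \to M''$ is the required fiber sequence.
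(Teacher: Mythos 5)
Your proposal is correct and follows essentially the same route as the paper: the shift and orthogonality axioms are verified via the exactness of $U$ and the equalizer description of mapping spaces (with the t-exactness hypotheses killing the target $\Map_{\D}(FUX,\Sigma^{-1}GUY)$), and the truncation axiom is handled by promoting $\iotag{0}\taug{0}UM$ to an object of $\LEq(F,G)$ using the comparison maps of \Cref{righttexactconncover}, lifting the counit to a morphism via naturality plus those two commuting triangles, and taking the cofiber in the stable $\infty$-category $\LEq(F,G)$. The diagram chase you defer is exactly the one the paper carries out.
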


\begin{proof}
    We check that $(\LEq(F,G)_{\geq 0}, \LEq(F,G)_{\leq 0})$ satisfies the properties of a t-structure. 
    First, we have to see that $\Sigma\LEq(F,G)_{\geq 0} \subseteq \LEq(F,G)_{\geq 0}$ and $\Sigma^{-1}\LEq(F,G)_{\leq 0} \subseteq \LEq(F,G)_{\leq 0}$. Let $M \in \LEq(F,G)_{\geq 0}$, i.e.\ $UM \in \C_{\geq 0}$. By \Cref{leqproperties}\ref{leqexact} the functor $U$ is exact, so we have $U\Sigma M \simeq \Sigma U M \in \C_{\geq 0}$ and hence $\Sigma M \in \LEq(F,G)_{\geq 0}$. The other inclusion can be shown analogously.
    
    Next, let $M\in \LEq(F,G)_{\geq 0}$ and $N \in \LEq(F,G)_{\leq -1}$. We have to show that $\Map_{\LEq(F,G)}(M,N)=0$. By \Cref{leqproperties}\ref{leqmap} this mapping space can be computed as the equalizer of two maps $\Map_{\C} (UM, UN) \rightrightarrows \Map_{\D}(FUM, GUN)$. But we have $UM \in \C_{\geq 0}$ and $UN \in \C_{\leq -1}$ by definition and $FUM \in \D_{\geq 0}$ by the right t-exactness of $F$ and $GUN \in \D_{\leq -1}$ by the left t-exactness of $G$. Thus, it follows that $\Map_{\C} (UM, UN)=0$ and $\Map_{\D}(FUM, GUN)=0$, hence $\Map_{\LEq(F,G)}(M,N)=0$.
    
    It remains to show that for any object $M \in \LEq(F,G)$ there is a fiber sequence $M' \to M \to M''$ with $M' \in \LEq(F,G)_{\geq 0}$ and $M'' \in \LEq(F,G)_{\leq -1}$. We have a fiber sequence 
    \begin{equation}\label{fibseq}
    \tau_{\geq 0} U M \xrightarrow{c} UM \xrightarrow{u} \tau_{\leq -1}UM
    \end{equation}
    in $\C$ where the maps are given by the counit resp.\ unit of adjunction. Using that $F$ is right t-exact and $G$ is left t-exact we can define a morphism 
    \[\kappa_{\geq 0}(M)\colon F\tau_{\geq 0} UM \to \tau_{\geq 0} F UM \xrightarrow{\kappa(M)} \tau_{\geq 0} GU M \to G\taug{0}UM\]
    where the first and last morphisms are the ones that we constructed in the proof of \Cref{righttexactconncover}. Equipping the left-hand term of (\ref{fibseq}) with this morphism we can promote it to an object of $\LEq(F,G)$. Furthermore, the left-hand map in (\ref{fibseq}) is a morphism in $\LEq(F,G)$ as can be seen from the commutative diagram
    \[\begin{tikzcd}
        F\tau_{\geq 0} U M \ar{r}{Fc} \ar{d} & FU M \ar{ddd}{\kappa(M)}\\
        \tau_{\geq 0} FUM \ar{ur}[swap]{cF} \ar{d}[swap]{\kappa(M)} & \\
        \tau_{\geq 0} GU M \ar{dr}{cG} \ar{d}& \\
        G\taug{0}UM \ar{r}[swap]{Gc} & U_{\C}M \rlap{.}
    \end{tikzcd}\]
    Here, the middle part of the diagram commutes because of the naturality of $c$ and the upper and lower parts commute by \Cref{righttexactconncover}.
    
    So we set $M'\coloneqq (\tau_{\geq 0} U M, \kappa_{\geq 0}(M))$ and $M'' \coloneqq \cofib(M' \xrightarrow{c} M)$ where the cofiber is taken in the category $\LEq(F,G)$. Since $\LEq(F,G)$ is stable this gives us a fiber sequence $M' \to M \to M''$ in $\LEq(F,G)$. As $U$ is exact by \Cref{leqproperties}\ref{leqexact}, we have 
    \[U M'' \simeq \cofib(\tau_{\geq 0} U M \xrightarrow{c} UM) \simeq \tau_{\leq -1}UM\]
    which shows that $M'' \in \LEq(F,G)_{\leq -1}$. Moreover, note that $M' \in \LEq(F,G)_{\geq 0}$ since $UM' = \taug{0}UM$.

    The t-exactness of the forgetful functor is clear.
\end{proof}

We can also compute the heart of the induced t-structure.

\begin{Proposition}\label{heart}
      Assume that there is a t-structure $(\D_{\geq 0}, \D_{\leq 0})$ on $\D$ such that $F\colon \C\to\D$ is right t-exact and $G\colon\C\to\D$ is left t-exact. The heart of the t-structure $(\LEq(F,G)_{\geq 0}, \LEq(F,G)_{\leq 0})$ is given by $\LEq(F,G)^{\heart} \simeq \LEq(F^{\heart},G^{\heart})$ where $F^{\heart}$ resp.\ $G^{\heart}$ denotes the composition $\C^{\heart} \hookrightarrow \C \xrightarrow{F} \D \xrightarrow{\pi_0} \D^{\heart}$ and for $G$ respectively.
\end{Proposition}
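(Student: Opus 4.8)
The plan is to produce an explicit comparison functor and then verify it is an equivalence by hand. Since $\LEq(F,G)^{\heart}$ is an ordinary category (being the heart of the t-structure constructed in \Cref{tstructure}) and $\LEq(F^{\heart},G^{\heart})$ is one as well, it suffices to construct a functor $\Theta\colon\LEq(F,G)^{\heart}\to\LEq(F^{\heart},G^{\heart})$ and check that it is fully faithful and essentially surjective. To build $\Theta$ I would use the universal property of the pullback defining the target: on the heart the forgetful functor $U$ lands in $\C^{\heart}$, giving $U^{\heart}\colon\LEq(F,G)^{\heart}\to\C^{\heart}$, and post-composing $\kappa$ with the arrow-wise functor $\Fun(\Delta^1,\pi_0)\colon\Arr(\D)\to\Arr(\D^{\heart})$ gives a second leg into $\Arr(\D^{\heart})$. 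For $M=(c,f)$ in the heart we have $c\in\C^{\heart}$, so right t-exactness of $F$ gives $Fc\in\D_{\geq 0}$ and left t-exactness of $G$ gives $Gc\in\D_{\leq 0}$; hence $\pi_0 Fc=F^{\heart}c$ and $\pi_0 Gc=G^{\heart}c$, so the two legs agree after $(\ev_0,\ev_1)$ with $(F^{\heart},G^{\heart})U^{\heart}$, the required coherence being the naturality of the truncation maps. On objects $\Theta(c,f)=(c,\pi_0 f)$.

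The key technical input, which I expect to be the main point, is the following orthogonality computation: for $X\in\D_{\geq 0}$ and $Y\in\D_{\leq 0}$ applying $\pi_0$ induces an equivalence
\[\Map_{\D}(X,Y)\xrightarrow{\ \sim\ }\Hom_{\D^{\heart}}(\pi_0 X,\pi_0 Y)\,.\]
This follows by mapping the fiber sequence $\taug{1}X\to X\to \pi_0 X$ into $Y$ (the first term contributes nothing since $\taug{1}X\in\D_{\geq 1}$ is orthogonal to $Y\in\D_{\leq 0}$) to get $\Map_{\D}(X,Y)\simeq\Map_{\D}(\pi_0 X,Y)$, and then mapping $\pi_0 X$ into the fiber sequence $\pi_0 Y\to Y\to \taul{-1}Y$ (the last term contributes nothing since $\pi_0 X\in\D_{\geq 0}$ is orthogonal to $\taul{-1}Y\in\D_{\leq -1}$) to get $\Map_{\D}(\pi_0 X,Y)\simeq\Map_{\D}(\pi_0 X,\pi_0 Y)$; one checks the composite equivalence is $\pi_0$ itself. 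In particular this space is discrete.

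With this in hand, full faithfulness is a comparison of the two mapping-space formulas from \Cref{leqproperties}\ref{leqmap}. For $M,N\in\LEq(F,G)^{\heart}$ both mapping spaces are equalizers of a pair of maps out of $\Map_{\C}(UM,UN)=\Hom_{\C^{\heart}}(UM,UN)$; the targets $\Map_{\D}(FUM,GUN)$ and $\Hom_{\D^{\heart}}(F^{\heart}UM,G^{\heart}UN)$ are identified by the orthogonality equivalence above. It then remains to verify that the two structure maps match: using $\pi_0(\kappa(N)\circ F\phi)=\pi_0\kappa(N)\circ F^{\heart}\phi$ and $\pi_0(G\phi\circ\kappa(M))=G^{\heart}\phi\circ\pi_0\kappa(M)$ one sees that $\pi_0$ carries $\kappa(N)_*F$ and $\kappa(M)^*G$ to the corresponding structure maps for $\LEq(F^{\heart},G^{\heart})$, so the two equalizer diagrams agree and $\Theta$ induces equivalences on mapping spaces. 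Finally, essential surjectivity is immediate: given $(c,\bar f)\in\LEq(F^{\heart},G^{\heart})$, let $f\colon Fc\to Gc$ be the preimage of $\bar f$ under $\Map_{\D}(Fc,Gc)\simeq\Hom_{\D^{\heart}}(\pi_0 Fc,\pi_0 Gc)$; then $(c,f)\in\LEq(F,G)^{\heart}$ and $\Theta(c,f)\simeq(c,\bar f)$. The main obstacle is the bookkeeping of coherences in the construction of $\Theta$ and in the compatibility of the two structure maps under $\pi_0$; the orthogonality equivalence is the only genuinely t-structural input.
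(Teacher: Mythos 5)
Your proposal is correct and follows essentially the same route as the paper: the same comparison functor built from the pullback universal property, essential surjectivity by lifting the structure map through the equivalence $\Map_{\D}(Fc,Gc)\simeq\Hom_{\D^{\heart}}(F^{\heart}c,G^{\heart}c)$, and full faithfulness by matching the two equalizer presentations of mapping spaces, with that same orthogonality equivalence (which the paper establishes via the truncation adjunctions rather than fiber sequences) as the key input. No gaps.
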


\begin{proof} 
    The forgetful functor $U\colon \LEq(F,G) \to \C$ is t-exact by \Cref{tstructure}, so it restricts to a functor $U^{\heart}\colon \LEq(F,G)^{\heart} \to \C^{\heart}$ of the hearts. Moreover, we denote by $U_{\C^{\heart}}\colon \LEq(F^{\heart},G^{\heart}) \to \C^{\heart}$ the forgetful functor of $\LEq(F^{\heart},G^{\heart})$.

    We define a functor $\Phi\colon \LEq(F,G)^{\heart} \to \LEq(F^{\heart},G^{\heart})$ via the universal property of the pullback and the commutative diagram
    \[\begin{tikzcd}
        \LEq(F,G)^{\heart} \ar[bend left]{drr}{U^{\heart}} \ar[dashed]{dr}{\Phi} \ar{dd}[swap]{\kappa|_{\LEq(F,G)^{\heart}}} & & \\
        & \LEq(F^{\heart},G^{\heart}) \ar{r}{U_{\C^{\heart}}} \ar{d}[swap]{\kappa_{\C^{\heart}}} & \C^{\heart} \ar{d}{(F^{\heart},G^{\heart})} \\
        \Arr(\D) \ar{r}[swap]{\pi_0}& \Arr(\D^{\heart}) \ar{r}[swap]{(\ev_0,\ev_1)} & \D^{\heart} \times \D^{\heart} \rlap{.}
    \end{tikzcd}\]
    We show that $\Phi$ is an equivalence by proving that it is essentially surjective and fully faithful.

    For the essential surjectivity let $M \in \LEq(F^{\heart},G^{\heart})$. Define a morphism $\tilde{\kappa}_M\colon FU_{\C^{\heart}}M \to GU_{\C^{\heart}}M$ to be the composition
    \[\begin{tikzcd}
        FU_{\C^{\heart}}M \ar{r}{\tilde{\kappa}_M} \ar{d}[swap]{u} & GU_{\C^{\heart}}M \\
        \taul{0} FU_{\C^{\heart}}M \ar{d}[swap]{\simeq} &  \taug{0} G U_{\C^{\heart}} M \ar{u}[swap]{c} \\
        F^{\heart} U_{\C^{\heart}} M \ar{r}{\kappa_{\C^{\heart}}(M)} & G^{\heart} U_{\C^{\heart}} M \ar{u}[swap]{\simeq}
    \end{tikzcd}\]
    where the arrows $u$ and $c$ are the unit resp.\ counit of adjunction, and the equivalences exist because $F$ is right t-exact and $G$ is left t-exact. Equipping $U_{\C^{\heart}}M$ with this morphism we can promote it to an object $(U_{\C^{\heart}}M, \tilde{\kappa}_M)$ of $\LEq(F,G)^{\heart}$. As $\pi_0(\tilde{\kappa}_M) \simeq \pi_0(\kappa_{\C^{\heart}}(M)) \simeq \kappa_{\C^{\heart}}(M)$, the functor $\Phi$ maps $(U_{\C^{\heart}}M, \tilde{\kappa}_M)$ to $(U_{\C^{\heart}}M,\kappa_{\C^{\heart}}(M)) = M$. Thus, the functor $\Phi$ is essentially surjective.

    To prove fully faithfulness of $\Phi$, let $M,N \in \LEq(F,G)^{\heart}$. We show that the map
    \[\Map_{\LEq(F,G)}(M,N) \to \Map_{\LEq(F^{\heart},G^{\heart})}(\Phi M, \Phi N)\]
    induced by $\Phi$ is an equivalence by using the description of the mapping spaces from \Cref{leqproperties}\ref{leqmap}. Recall that $\Map_{\LEq(F,G)}(M,N)$ is given by
    \[\begin{tikzcd}
            \Eq \Bigg(\Map_{\C}(UM,UN) \ar[shift left=0.75ex]{rr}{\kappa(M)^*G} \ar[shift right=0.75ex]{rr}[swap]{\kappa(N)_*F}  && \Map_{\D}(FUM,GUN)\Bigg)
    \end{tikzcd}\]
    and that $\Map_{\LEq(F^{\heart},G^{\heart})}(\Phi M,\Phi N)$ is given by
    \[\begin{tikzcd}
            \Eq \Bigg(\Map_{\C^{\heart}}(U^{\heart}M,U^{\heart}N) \ar[shift left=0.75ex]{rr}{\pi_0(\kappa(M))^*G^{\heart}} \ar[shift right=0.75ex]{rr}[swap]{\pi_0(\kappa(N))_*F^{\heart}}  &&[10 pt] \Map_{\D^{\heart}}(F^{\heart}U^{\heart}M,G^{\heart}U^{\heart}N)\Bigg) \rlap{.}
    \end{tikzcd}\]
    There is a commutative diagram
    \[\begin{tikzcd}
        \Map_{\C}(UM,UN) \ar{r}{\simeq} \ar{d}[swap]{G} & \Map_{\C^{\heart}}(U^{\heart}M,U^{\heart}N) \ar{d}{G^{\heart}} \\
        \Map_{\D}(GUM,GUN) \ar{r}{\pi_0} \ar{d}[swap]{\kappa(M)^*} & \Map_{\D^{\heart}}(G^{\heart}U^{\heart}M,G^{\heart}U^{\heart}N) \ar{d}{\pi_0(\kappa(M))^*} \\
        \Map_{\D}(FUM,GUN) \ar{r}{\pi_0} & \Map_{\D^{\heart}}(F^{\heart}U^{\heart}M,G^{\heart}U^{\heart}N) \rlap{.}
    \end{tikzcd}\]
    A similar diagram can be drawn for the map $\kappa(N)_*F$. Putting both diagrams together we get a commutative diagram
    \[\begin{tikzcd}
        \Map_{\C}(UM,UN) \ar[shift left=0.75ex]{rrr}{\kappa(M)^*G} \ar[shift right=0.75ex]{rrr}[swap]{\kappa(N)_*F} \ar{d}{\simeq} &&& \Map_{\D}(FUM,GUN) \ar{d}{\pi_0}\\
        \Map_{\C^{\heart}}(U^{\heart}M,U^{\heart}N) \ar[shift left=0.75ex]{rrr}{\pi_0(\kappa(M))^*G^{\heart}} \ar[shift right=0.75ex]{rrr}[swap]{\pi_0(\kappa(N))_*F^{\heart}}  &&& \Map_{\D^{\heart}}(F^{\heart}U^{\heart}M,G^{\heart}U^{\heart}N) \rlap{.}
    \end{tikzcd}\]
    Taking the equalizer of the rows, the vertical maps in the diagram induce the map $\Phi$. Thus, it suffices to show that the vertical arrow on the right-hand side is an equivalence. This is the case as it is given by the chain of equivalences 
    \begin{align*}
        \Map_{\D}(FUM,GUN) &\simeq \Map_{\D}(\taul{0}FUM,GUN) \\
        &\simeq \Map_{\D}(\taul{0}FUM,\taug{0} GUN) \\
        &\simeq \Map_{\D^{\heart}}(F^{\heart}U^{\heart}M,G^{\heart}U^{\heart}N)
    \end{align*}
    where we used the left t-exactness of $G$ and the adjunction $\taul{0} \dashv \iotal{0}$ in the first equivalence, the right t-exactness of $F$ and the adjunction $\iotag{0} \dashv \taug{0}$ in the second equivalence and both t-exactness properties in the last equivalence.
\end{proof}

From the definition of the t-structure and the description of its heart we immediately get:

\begin{Corollary}\label{Utexact}
    Assume that there is a t-structure $(\D_{\geq 0}, \D_{\leq 0})$ on $\D$ such that $F\colon \C\to\D$ is right t-exact and $G\colon\C\to\D$ is left t-exact. The forgetful functor $U$ is t-exact and there is a cartesian diagram
    \[\begin{tikzcd}
        \LEq(F^{\heart}, G^{\heart}) \ar{r}{U_{\C^{\heart}}} \ar[hook]{d} & \C^{\heart} \ar[hook]{d} \\
        \LEq(F,G) \ar{r}{U} & \C \rlap{.}
    \end{tikzcd}\]
\end{Corollary}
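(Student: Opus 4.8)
The plan is to read both assertions straight off the two preceding propositions, so that essentially no new argument is required. The t-exactness of $U$ is already recorded as the last sentence of \Cref{tstructure}, which settles the first claim.

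For the cartesian square I would rely on the elementary fact that a pullback along a fully faithful functor is computed as a full subcategory: given any functor $p\colon \mathcal{E} \to \C$ and the inclusion of a full subcategory $\C^{\heart} \hookrightarrow \C$, the pullback $\C^{\heart} \times_{\C} \mathcal{E}$ is equivalent to the full subcategory of $\mathcal{E}$ spanned by the objects $e$ with $p(e) \in \C^{\heart}$. Taking $\mathcal{E} = \LEq(F,G)$ and $p = U$, the pullback of $U$ along $\C^{\heart} \hookrightarrow \C$ is therefore the full subcategory of those $M \in \LEq(F,G)$ with $UM \in \C^{\heart} = \C_{\geq 0} \cap \C_{\leq 0}$. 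By the very definition of the t-structure on $\LEq(F,G)$ --- namely $M \in \LEq(F,G)_{\geq 0}$ (resp.\ $\LEq(F,G)_{\leq 0}$) exactly when $UM \in \C_{\geq 0}$ (resp.\ $\C_{\leq 0}$) --- this full subcategory is precisely $\LEq(F,G)_{\geq 0} \cap \LEq(F,G)_{\leq 0} = \LEq(F,G)^{\heart}$.

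It then remains to transport this identification through \Cref{heart}, which provides an equivalence $\LEq(F,G)^{\heart} \simeq \LEq(F^{\heart}, G^{\heart})$ carrying the restriction of $U$ to the heart (the functor $U^{\heart}$ appearing at the start of the proof of \Cref{heart}) to the forgetful functor $U_{\C^{\heart}}$. Assembling these two identifications with the projections of the pullback yields exactly the square in the statement, and its commutativity is automatic from the pullback construction. I do not expect any genuine obstacle: the only points deserving care are the clean formulation of the pullback-along-a-fully-faithful-functor principle and the bookkeeping that matches the top horizontal map with $U_{\C^{\heart}}$ under the equivalence of \Cref{heart}.
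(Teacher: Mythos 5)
Your proposal is correct and matches the paper's intent exactly: the paper offers no written proof beyond the remark that the corollary follows ``from the definition of the t-structure and the description of its heart,'' and your argument (t-exactness from \Cref{tstructure}, the pullback along the replete full subcategory $\C^{\heart}\hookrightarrow\C$ identified with $\{M: UM\in\C^{\heart}\}=\LEq(F,G)^{\heart}$, then the equivalence of \Cref{heart} carrying $U^{\heart}$ to $U_{\C^{\heart}}$) is precisely the intended filling-in of those details.
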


    \section{The left adjoint of the forgetful functor}\label{chap3}

Let $\C$ be an $\infty$-category that admits countable coproducts and let $F\colon \C \to \C$ be an endofunctor that preserves them. In this section we define the left adjoint of the forgetful functor $U\colon \Cart(\C,F) \to \C$. It follows that $U$ exhibits $\Cart(\C,F)$ as monadic over $\C$, cf.\ \cite[Definition 4.7.3.4]{HA}. This is the first key input for the proof of our main theorem. We conclude the section by showing that the left adjoint of $U$ is t-exact if $\C$ is a stable $\infty$-category equipped with a t-structure that is compatible with coproducts (cf. \cite[Definition 1.2.2.12]{HA}).

\begin{Definition}\label{DefL}
    Let $\C$ be an $\infty$-category that admits all countable coproducts and let $F\colon \C \to \C$ be an endofunctor that preserves countable coproducts. We define a functor $L\colon \C \to \Cart(\C,F)$ via the universal property of the pullback and the diagram
    \[\begin{tikzcd}
        \C \ar[dashed]{dr}{L} \ar[bend right]{ddr}[swap]{s} \ar[bend left]{drr}{\coprod_{n \geq 0}F^n} &  &  \\
         & \Cart(\C,F) \ar{r}{U} \ar{d}{\kappa} & \C \ar{d}{(F,\id)} \\
         & \Arr(\C) \ar{r}[swap]{(\ev_0,\ev_1)} & \C \times \C \rlap{,}
    \end{tikzcd}\]
    where $\coprod_{n \geq 0}F^n$ is the coproduct in $\Fun(\C,\C)$ of the functors $F^n\colon\C \to \C$ and the functor $s$ is defined as follows:
    
    Note that specifying a functor $\C \to \Arr(\C)$ is the same as giving an arrow in $\Fun(\C,\C)$, i.e.\ a natural transformation. Under this equivalence the functor $s$ corresponds to the natural transformation \[\coprod_{n \geq 0} \inc_{n+1}\colon \coprod_{n \geq 0} F^{n+1} \to \coprod_{k\geq 0} F^k\]
    where $\inc_{n+1}\colon F^{n+1} \to \coprod_{k\geq 0} F^k$ denotes the natural map into the coproduct.
    
    As $F$ preserves countable coproducts, the outer diagram commutes and hence defines a functor $L$.
\end{Definition}

\begin{Proposition}\label{Lexists}
    Let $\C$ be an $\infty$-category that admits all countable coproducts and let $F\colon \C \to \C$ be an endofunctor that preserves countable coproducts. Then the functor $L$ is left adjoint to the forgetful functor $U\colon \Cart(\C,F) \to \C$.
\end{Proposition}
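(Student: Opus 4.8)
The plan is to establish the adjunction $L \dashv U$ by exhibiting, for every object $c \in \C$ and every Cartier module $M \in \Cart(\C,F)$, a natural equivalence $\Map_{\Cart(\C,F)}(L(c), M) \simeq \Map_{\C}(c, UM)$, and to identify the unit with the summand inclusion $\inc_0\colon c \to \coprod_{n \geq 0} F^n(c) = UL(c)$. The key tool is the equalizer description of mapping spaces from \Cref{properties}\ref{map}, which presents $\Map_{\Cart(\C,F)}(L(c),M)$ as the equalizer of two maps $\Map_{\C}(UL(c), UM) \rightrightarrows \Map_{\C}(FUL(c), UM)$, namely precomposition with $\kappa(L(c))$ and the map $g \mapsto \kappa_M \circ F(g)$.

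First I would compute the two underlying objects. Since $UL(c) = \coprod_{n\geq 0} F^n(c)$ and $F$ preserves countable coproducts, we have $FUL(c) \simeq \coprod_{n \geq 0} F^{n+1}(c)$; moreover, since mapping out of a coproduct is the product of mapping spaces, the equalizer becomes
\[
\Eq\Bigl(\textstyle\prod_{n \geq 0}\Map_{\C}(F^n(c), UM) \rightrightarrows \prod_{n \geq 0}\Map_{\C}(F^{n+1}(c), UM)\Bigr).
\]
Unwinding the definition of $\kappa(L(c))$ from \Cref{DefL}, which on the $n$-th summand is the inclusion $\inc_{n+1}$, the top map sends a family $(g_n)_n$ to the shifted family $(g_{n+1})_n$, while the bottom map sends it to $(\kappa_M \circ F(g_n))_n$. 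Thus the equalizer is the space of families $(g_n)_{n\geq 0}$ equipped with coherent homotopies $g_{n+1} \simeq \kappa_M \circ F(g_n)$ for every $n \geq 0$.

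I would then show that the projection to the zeroth component, $(g_n)_n \mapsto g_0$, is an equivalence onto $\Map_{\C}(c, UM) = \Map_{\C}(F^0(c), UM)$. Intuitively, the recursion $g_{n+1} \simeq \kappa_M \circ F(g_n)$ determines every $g_n$ from $g_0$. To make this precise, write the equalizer as the limit of its finite truncations $E_N$, where $E_N$ records $(g_0, \ldots, g_N)$ together with the homotopies $g_{i+1} \simeq \kappa_M \circ F(g_i)$ for $i < N$. Each restriction map $E_{N} \to E_{N-1}$ is a trivial fibration, because extending a point of $E_{N-1}$ amounts to choosing $g_N$ together with a path to the fixed point $\kappa_M \circ F(g_{N-1})$, a contractible choice. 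Hence every $E_N$ is equivalent to $E_0 = \Map_{\C}(c, UM)$, and passing to the limit over this tower of equivalences yields the desired identification.

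The main obstacle is precisely this last step: turning the informal statement ``the recursion determines everything'' into a genuine equivalence of mapping spaces, where the infinitely many homotopies must be handled coherently by the telescope argument above rather than by a naive choice of the $g_n$. Once the equivalence is established, naturality in both $c$ and $M$ is automatic from the functoriality of all the constructions involved, and tracing the identity of $L(c)$ through the equivalence (its underlying family is $(\inc_n)_n$, with zeroth component $\inc_0$) shows that the unit of the adjunction is the inclusion $\inc_0$, completing the proof.
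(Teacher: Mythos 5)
Your proposal is correct and shares the paper's overall skeleton: both identify $\Map_{\Cart(\C,F)}(LX,M)$ via the equalizer description of mapping spaces from \Cref{properties}, use that $F$ preserves countable coproducts to rewrite it as the equalizer of two maps $\prod_{n\geq 0}\Map_{\C}(F^nX,UM)\rightrightarrows\prod_{n\geq 0}\Map_{\C}(F^{n+1}X,UM)$ (the shift versus $\kappa_M\circ F(-)$), and reduce the adjunction to showing that projection onto the zeroth factor is an equivalence. Where you genuinely diverge is in how that one nontrivial step is carried out. The paper constructs an explicit candidate inverse $\beta$, defined by the recursion $\pr_0\alpha=\id$ and $\pr_{n+1}\alpha=\kappa(M)_*F\pr_n\alpha$, and then verifies by induction on components that both composites are homotopic to the identity. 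You instead exhibit the equalizer as the limit of a tower of finite-stage approximations $E_N$ and observe that each restriction $E_N\to E_{N-1}$ is a pullback of the path fibration (extending a point amounts to choosing $g_N$ together with a path to the fixed point $\kappa_M\circ F(g_{N-1})$, a contractible choice), hence an equivalence, so the limit is $E_0=\Map_{\C}(X,UM)$. Both arguments work; yours trades the paper's componentwise homotopy bookkeeping for the (routine, but worth stating) identification of the equalizer of countable products with the inverse limit of the truncated equalizers, which holds because limits commute with limits. Two small points to tighten: the homotopies $g_{n+1}\simeq\kappa_M\circ F(g_n)$ for different $n$ are independent of one another (the path space of a product is the product of path spaces), so no higher coherence among them is actually needed; and rather than asserting naturality of your equivalence by hand, note that the map you invert is exactly $u^*\circ U$ with $u=\inc_0$, so the criterion of \cite[Proposition 5.2.2.8]{HTT} --- the same one the paper invokes --- immediately upgrades your objectwise equivalences to the adjunction $L\dashv U$ with unit $\inc_0$.
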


\begin{proof}
    First, we define the unit map $u\colon \id_{\C} \to UL$ to be the inclusion of the zeroth component $\inc_0\colon \id_{\C}=F^0 \to \coprod_{n \geq 0} F^n$. By \cite[Proposition 5.2.2.8]{HTT} it is enough to show that the map
    \begin{align}\label{adjcomp1}
    \Map_{\Cart(\C,F)}(LX,M) \xrightarrow{U} \Map_{\C}(ULX,UM) \xrightarrow{u^*} \Map_{\C}(X,UM)
    \end{align}
    is an equivalence for all objects $X\in\C$ and $M\in\Cart(\C,F)$.
    
    Note that there are equivalences
    \begin{align*}
    \Map&_{\Cart(\C,F)}(LX,M) \simeq \Eq\left(\hspace{-5 pt}
        \begin{tikzcd}[ampersand replacement=\&]
            \Map_{\C}(ULX,UM) \ar[shift left=0.75ex]{r}{\kappa(LX)^*} \ar[shift right=0.75ex]{r}[swap]{\kappa(M)_*F}  \&[10 pt] \Map_{\C}(FULX,UM)
        \end{tikzcd}\hspace{-5 pt}\right) \\
        &\simeq \Eq\left(\hspace{-5 pt}\begin{tikzcd}[ampersand replacement=\&]
            \Map_{\C}\left(\displaystyle\coprod\limits_{n \geq 0}F^nX, UM\right)  \ar[shift left=0.75ex]{r} \ar[shift right=0.75ex]{r}  \& \Map_{\C}\left(\displaystyle\coprod\limits_{n \geq 0}F^{n+1}X, UM\right)
        \end{tikzcd}\hspace{-5 pt}\right) \\
        &\simeq \Eq\left(\hspace{-5 pt}\begin{tikzcd}[ampersand replacement=\&]
            \displaystyle\prod\limits_{n \geq 0} \Map_{\C}(F^nX,UM) \ar[shift left=0.75ex]{r}{\Phi} \ar[shift right=0.75ex]{r}[swap]{\Psi}  \& \displaystyle\prod\limits_{n \geq 0}\Map_{\C}(F^{n+1}X,UM)
        \end{tikzcd}\hspace{-5 pt}\right) \, ,
    \end{align*}
    where we used \Cref{properties}\ref{map} in the first equivalence, the definition of $L$ and the assumption that $F$ preserves countable coproducts in the second equivalence and the universal property of coproducts in the last equivalence. Using the definition of $L$ we see that the map $\Phi$ is defined by $\pr_n \Phi \coloneqq \pr_{n+1}$ where \[\pr_n\colon \prod_{k\geq 0} \Map_{\C}(F^kX,UM) \to \Map_{\C}(F^nX,UM)\] denotes the projection on the $n$-th factor. Moreover, the map $\Psi$ is defined by ${\pr_n\Psi \coloneqq \kappa(M)_*F\pr_n}$. Thus, the composition (\ref{adjcomp1}) is equivalent to the composition
    \begin{align}\label{adjcomp2}
        \Eq(\Phi, \Psi) \xrightarrow{i} \prod_{n \geq 0} \Map_{\C}(F^nX,UM) \xrightarrow{\pr_0} \Map_{\C}(X,UM) \, ,
    \end{align}
    where $i$ is the natural map out of the equalizer. 
    
    To show that this composition is an equivalence we construct a map \[\beta\colon \Map_{\C}(X,UM) \to \Eq(\Phi, \Psi)\] that satisfies $\pr_0 i \beta \simeq \id$ and $\beta \pr_0 i \simeq \id$ and hence is an inverse of (\ref{adjcomp2}). For that, we use the universal property of the equalizer and construct a map \[\alpha\colon \Map_{\C}(X,UM) \to \prod_{n \geq 0} \Map_{\C}(F^nX,UM)\] with a homotopy $\Phi \alpha \simeq \Psi \alpha$. We define inductively \[\pr_0 \alpha\coloneqq \id\colon \Map_{\C}(X,UM) \to \Map_{\C}(F^0X,UM)\] and 
    \[\begin{tikzcd}
        \Map_{\C}(X,UM) \ar{r}{\pr_{n+1}\alpha} \ar{d}[swap]{\pr_n\alpha} & \Map_{\C}(F^{n+1}X,UM) \\
        \Map_{\C}(F^nX,UM) \ar{r}[swap]{F} & \Map_{\C}(F^{n+1}X,FUM) \ar{u}[swap]{\kappa(M)_*} \rlap{.}
    \end{tikzcd}\]
    This induces a map $\alpha\colon \Map_{\C}(X,UM) \to \prod_{n \geq 0} \Map_{\C}(F^nX,UM)$ and
    \[ \Phi \alpha = (\pr_{n+1}\alpha)_n \simeq (\kappa(M)_*F\pr_n \alpha)_n = \Psi \alpha\]
    holds by construction. Therefore, we get a map $\beta\colon \Map_{\C}(X,UM) \to \Eq(\Phi, \Psi)$ that satisfies $i \beta \simeq \alpha$. 
    
    It follows immediately from the construction of $\alpha$ that we have \[\pr_0 i \beta \simeq \pr_0 \alpha \simeq \id \, .\] Thus, it remains to show that $\beta \pr_0 i \simeq \id$. For that, it suffices to show that for all $n$ we have $\pr_n i \beta \pr_0 i \simeq \pr_n i$. We do this inductively. For $n=0$ we have \[\pr_0 i \beta \pr_0 i \simeq \id \pr_0 i \simeq \pr_0 i \, \]
    by the above. For $n > 0$ we compute
    \begin{align*}
        \pr_{n+1} i \beta \pr_0 i &\simeq \pr_{n+1} \alpha \pr_0 i \\
        &\simeq \kappa(M)_*F\pr_n\alpha\pr_0 i \\
        &\simeq \kappa(M)_*F\pr_n i \\
        &\simeq \pr_{n+1} i
    \end{align*}
    where we used the definition of $\beta$ in the first equivalence, the definition of $\alpha$ in the second equivalence, the induction hypothesis in the third equivalence and the definition of $\Phi$ and $\Psi$ in the last equivalence. 
\end{proof}

\begin{Corollary}\label{leqLexists}
    Let $\C$ be an $\infty$-category that admits all countable coproducts and let $F\colon \C \to \D$ be a functor that preserves countable coproducts. Let $G\colon \C \to \D$ be a functor that admits a left adjoint $G^L\colon \D \to \C$. Then the forgetful functor $U\colon \LEq(F,G) \to \C$ admits a left adjoint. It can be described as in \Cref{DefL} by replacing $F$ by $G^L F$.
\end{Corollary}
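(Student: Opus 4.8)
The plan is to reduce the statement to the case of Cartier modules treated in \Cref{Lexists}, using the identification of lax equalizers with Cartier modules from \Cref{leqadjoint}. Since $G$ admits a left adjoint $G^L$, the second part of \Cref{leqadjoint} provides an equivalence
\[\LEq(F,G) \simeq \LEq(G^L F, \id_{\C}) = \Cart(\C, G^L F).\]
First I would record that this equivalence is compatible with the forgetful functors, i.e.\ that it identifies $U\colon \LEq(F,G) \to \C$ with $U\colon \Cart(\C, G^L F) \to \C$. This is visible from the proof of \Cref{leqadjoint}: the equivalence is obtained by pulling back, along the diagonal $\Delta\colon \C \to \C \times \C$, the equivalence of oriented fiber products $\C \laxtimes{F,\D,G} \C \simeq \C \laxtimes{G^L F, \C, \id_{\C}} \C$, which lies over $\C \times \C$. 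In both presentations the forgetful functor $U$ is exactly the projection onto the factor $\C$ that maps in via $\Delta$, so the composite to $\C \times \C$, and hence $U$ itself, is preserved by the equivalence.

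Next I would verify the hypotheses of \Cref{Lexists} for the endofunctor $G^L F\colon \C \to \C$. By assumption $\C$ admits all countable coproducts, and $G^L F$ preserves them: $F$ preserves countable coproducts by hypothesis, while $G^L$, being a left adjoint, preserves all colimits. Hence \Cref{Lexists} applies and yields a functor $L\colon \C \to \Cart(\C, G^L F)$, constructed as in \Cref{DefL} with $F$ replaced by $G^L F$, that is left adjoint to the forgetful functor $U\colon \Cart(\C, G^L F) \to \C$.

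Finally I would transport this adjunction across the equivalence of the first paragraph. Composing $L$ with the equivalence $\Cart(\C, G^L F) \simeq \LEq(F,G)$ and using that this equivalence commutes with the forgetful functors yields a left adjoint of $U\colon \LEq(F,G) \to \C$ with the asserted explicit description. The only step that requires genuine care is the compatibility of the equivalence of \Cref{leqadjoint} with the forgetful functors; once this is read off from the pullback description, the remaining steps are direct applications of the results already established.
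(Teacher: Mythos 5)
Your proposal is correct and follows essentially the same route as the paper: invoke \Cref{leqadjoint} to identify $\LEq(F,G)$ with $\Cart(\C,G^LF)$, note that $G^LF$ preserves countable coproducts since $G^L$ is a left adjoint, and apply \Cref{Lexists}. Your extra verification that the equivalence commutes with the forgetful functors is a detail the paper leaves implicit, and it is a worthwhile point to make explicit.
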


\begin{proof}
    There is an equivalence of $\infty$-categories $\LEq(F,G) \simeq \Cart(\C,G^L F)$ by \Cref{leqadjoint}. Note that the functor $G^L$ preserves countable coproducts as it is a left adjoint. Hence, we can apply \Cref{Lexists}.
\end{proof}

\begin{Corollary}\label{frobLexists}
    Let $\C$ be an $\infty$-category that admits all countable coproducts and let $G\colon \C \to \C$ be an endofunctor that admits a left adjoint $G^L$. Then the forgetful functor $U\colon \Frob(\C,G) \to \C$ admits a left adjoint. It can be described as in \Cref{DefL} by replacing $F$ by $G^L$.
\end{Corollary}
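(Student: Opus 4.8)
The plan is to deduce this from \Cref{leqLexists}, of which it is essentially a direct instantiation. By \Cref{DefCart} we have $\Frob(\C,G) = \LEq(\id_{\C}, G)$, so the forgetful functor in question is precisely $U\colon \LEq(\id_{\C}, G) \to \C$. I would therefore apply \Cref{leqLexists} with the roles of its two functors $\C \to \D$ (here $\D = \C$) played by $\id_{\C}$ and $G$ respectively.

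To invoke \Cref{leqLexists} I only need to check its two hypotheses. First, the functor in the ``$F$''-slot must preserve countable coproducts; here this functor is $\id_{\C}$, which preserves all colimits trivially. Second, the functor in the ``$G$''-slot must admit a left adjoint, and this is exactly our standing assumption that $G$ has a left adjoint $G^L$. Both hypotheses hold, so \Cref{leqLexists} yields that $U\colon \Frob(\C,G) \to \C$ admits a left adjoint, described as in \Cref{DefL} by replacing the functor $F$ there by $G^L \circ \id_{\C} = G^L$. This is exactly the assertion of the corollary.

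As an alternative route that does not pass through the intermediate corollary, I could instead invoke \Cref{cartadjoint}: since $G^L$ is left adjoint to $G$, that corollary gives an equivalence $\Frob(\C,G) \simeq \Cart(\C, G^L)$ over $\C$, hence compatible with the respective forgetful functors. As a left adjoint, $G^L$ preserves all colimits, in particular countable coproducts, so \Cref{Lexists} supplies the left adjoint of $U\colon \Cart(\C, G^L) \to \C$ together with its explicit description from \Cref{DefL}; transporting this left adjoint along the equivalence then gives the claim for $\Frob(\C,G)$.

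I expect no genuine obstacle here. The only point needing (minimal) care is the bookkeeping of the substitution $F \mapsto G^L$ and the observation that $G^L$ indeed preserves countable coproducts, which is automatic since it is a left adjoint. All the substantive content — the explicit construction of the free module $\coprod_{n\geq 0}(G^L)^n$ and the verification that it is genuinely left adjoint to the forgetful functor — has already been carried out in \Cref{Lexists} (and packaged in \Cref{leqLexists}), so nothing new has to be proved.
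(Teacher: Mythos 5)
Your proposal is correct and matches the paper's proof exactly: the paper also just applies \Cref{leqLexists} with $F=\id_{\C}$, which trivially preserves countable coproducts. Your alternative route via \Cref{cartadjoint} and \Cref{Lexists} is essentially what \Cref{leqLexists} already does internally, so it adds nothing new but is equally valid.
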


\begin{proof}
    Apply \Cref{leqLexists} with $F=\id_{\C}$.
\end{proof}

As a consequence, we obtain that the forgetful functor is monadic. The proof of this is an application of the $\infty$-categorical Barr-Beck Theorem \cite[Theorem 4.7.3.5]{HA}. To see this, we first show the following.

\begin{Lemma}\label{Usplitcolimits}
    Let $\C$ and $\D$ be $\infty$-categories and $F,G\colon\C \to \D$ be functors. The $\infty$-categories $\Cart(\C,F)$, $\Frob(\C,G)$ (in the case $\D=\C$) and $\LEq(F,G)$ admit colimits of $U$-split simplicial objects (cf.\ \cite[Definition 4.7.2.2]{HA}) and $U$ preserves them.
\end{Lemma}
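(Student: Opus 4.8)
The plan is to deduce the entire statement from \Cref{leqproperties}\ref{leqcolim}, which already governs colimits in $\LEq(F,G)$ that are preserved by $F$ on the base. Since $\Cart(\C,F) = \LEq(F,\id_{\C})$ and $\Frob(\C,G) = \LEq(\id_{\C},G)$ by \Cref{DefCart}, it suffices to treat a general lax equalizer $\LEq(F,G)$; the two named cases follow by specialization, with the Frobenius case being automatic since there the relevant functor on the base is $\id_{\C}$. So I would fix a $U$-split simplicial object $X_\bullet$ of $\LEq(F,G)$, meaning by definition that the composite simplicial object $U \circ X_\bullet$ in $\C$ extends to a split simplicial object in the sense of \cite[Definition 4.7.2.2]{HA}.

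The single external input I would invoke is that split simplicial objects have \emph{absolute} colimits: a split simplicial object admits a colimit that is preserved by every functor out of the ambient $\infty$-category, cf.\ \cite[Lemma 6.1.3.16]{HTT}. (This is because, for any functor $H$, the simplicial object $H \circ X_\bullet$ inherits the splitting, hence is again a colimit diagram after augmentation.) Applied here, $U \circ X_\bullet$ admits a colimit in $\C$, and -- crucially -- this colimit is preserved by the functor $F \colon \C \to \D$, even though $F$ need not preserve colimits in general.

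These are exactly the two hypotheses of \Cref{leqproperties}\ref{leqcolim} for the diagram $X_\bullet$: the composite $K \to \LEq(F,G) \xrightarrow{U} \C$ admits a colimit, and $F$ preserves it. That part of the proposition then yields that $X_\bullet$ admits a colimit in $\LEq(F,G)$ and that $U$ preserves it, which is the assertion. The lemma is thus short once the machinery is in place; I expect the only point requiring care to be the correct invocation of absoluteness, namely ensuring that the splitting of $U \circ X_\bullet$ is genuinely a splitting in $\C$ (the target of the forgetful functor $U$), so that it is precisely $F$ -- and not merely some hypothetical colimit-preserving functor -- whose preservation of this one colimit we exploit. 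This is immediate from the definition of $U$-splitness, so no argument beyond \Cref{leqproperties}\ref{leqcolim} and the absoluteness of split colimits is needed.
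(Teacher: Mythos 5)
Your proposal is correct and follows essentially the same route as the paper: both arguments combine the existence of colimits of split simplicial objects (\cite[Lemma 6.1.3.16]{HTT}) with the fact that such colimits are absolute, hence preserved by $F$ (the paper cites \cite[Remark 4.7.2.4]{HA} for this), and then invoke the colimit statement for lax equalizers. The only cosmetic difference is that you argue once for general $\LEq(F,G)$ and specialize, while the paper treats $\Cart(\C,F)$ first and declares the other cases analogous.
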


\begin{proof}
    Let $X_{\bullet}\colon N(\Delta_+^{\op})^{\op} \to \Cart(\C,F)$ be a $U$-split simplicial object, i.e.\ $UX_{\bullet}$ is a split simplicial object in $\C$. By \cite[Lemma 6.1.3.16]{HTT}, $UX_{\bullet}$ admits a colimit in $\C$. Note that the functor $F$ preserves this colimit by \cite[Remark 4.7.2.4]{HA}. Thus, the claim about $\Cart(\C,F)$ follows from \Cref{properties}\ref{colim}.
    
    To prove the claims about $\Frob(\C,G)$ and $\LEq(F,G)$ we can do an analogous argument using \Cref{frobproperties}\ref{frobcolim}, resp.\ \Cref{leqproperties}\ref{leqcolim} at the end.
\end{proof}

\begin{Theorem}\label{monadicity}
    \begin{enumerate}[label=(\alph*)]
        \item\label{Cartmonadicity} Let $\C$ be an $\infty$-category and let $F\colon \C \to \C$ be an endofunctor such that the forgetful functor $U\colon \Cart(\C,F) \to \C$ admits a left adjoint (e.g.\ if $\C$ admits countable coproducts and $F$ preserves them). Then the functor $U$ exhibits $\Cart(\C,F)$ as monadic over $\C$ (in the sense of \cite[Definition 4.7.3.4]{HA}).
        \item Let $\C$ and $\D$ be $\infty$-categories and let $F, G\colon \C \to \D$ be functors such that the forgetful functor $U\colon \LEq(F,G) \to \C$ admits a left adjoint. Then the functor $U$ exhibits $\LEq(F,G)$ as monadic over $\C$.
        \item Let $\C$ be an $\infty$-category and let $G\colon \C \to \C$ be an endofunctor such that the forgetful functor $U\colon \Frob(\C,G) \to \C$ admits a left adjoint. Then the functor $U$ exhibits $\Frob(\C,G)$ as monadic over $\C$.
    \end{enumerate}
\end{Theorem}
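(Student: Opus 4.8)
The plan is to verify the hypotheses of the $\infty$-categorical Barr--Beck theorem \cite[Theorem 4.7.3.5]{HA}. Recall that this theorem asserts that a functor $U$ which admits a left adjoint is monadic if and only if $U$ is conservative and the source $\infty$-category admits colimits of $U$-split simplicial objects which are moreover preserved by $U$. Since in each of the three parts we are given that $U$ admits a left adjoint, the entire proof reduces to checking these two remaining conditions, and both have already been established in the preceding results. The argument is therefore essentially one of assembly.

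For part \ref{Cartmonadicity}, I would first note that the existence of a left adjoint is the standing hypothesis; in the special case where $\C$ admits countable coproducts and $F$ preserves them, it is furnished concretely by \Cref{Lexists}. Conservativity of the forgetful functor is precisely \Cref{properties}\ref{Uconservative}. The final condition---that $\Cart(\C,F)$ admits colimits of $U$-split simplicial objects and that $U$ preserves them---is exactly \Cref{Usplitcolimits}. Feeding these three facts into \cite[Theorem 4.7.3.5]{HA} yields the monadicity of $U$.

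Parts (b) and (c) are then proved by the identical argument: the left adjoint exists by assumption, conservativity follows from \Cref{leqproperties}\ref{leqUconservative} (resp.\ \Cref{frobproperties}\ref{frobUconservative}), and the requisite colimits of $U$-split simplicial objects are again supplied by \Cref{Usplitcolimits}.

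I do not anticipate any genuine obstacle, since the conceptual work was front-loaded into \Cref{Usplitcolimits}, whose proof uses \cite[Lemma 6.1.3.16]{HTT} to produce colimits of split simplicial objects and \cite[Remark 4.7.2.4]{HA} to see that the relevant endofunctor preserves them, after which \Cref{properties}\ref{colim} (and its analogues) transports the colimit into the lax equalizer. The only point requiring attention is the recognition that $U$-split simplicial objects---rather than arbitrary sifted or simplicial diagrams---are exactly the class that Barr--Beck demands, so that no stronger colimit-existence hypothesis on $\C$ or $F$ is needed and the statement holds at the stated level of generality.
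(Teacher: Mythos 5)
Your proposal is correct and follows exactly the paper's own argument: apply the $\infty$-categorical Barr--Beck theorem, citing \Cref{properties}\ref{Uconservative} (resp.\ \Cref{leqproperties}\ref{leqUconservative}, \Cref{frobproperties}\ref{frobUconservative}) for conservativity and \Cref{Usplitcolimits} for the existence and preservation of colimits of $U$-split simplicial objects. No gaps.
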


\begin{proof}
By the $\infty$-categorical Barr-Beck Theorem \cite[Theorem 4.7.3.5]{HA} it is enough to show that $U$ is conservative, $\Cart(\C,F)$ (resp.\ $\LEq(F,G)$ or $\Frob(\C,G)$) admits colimits of $U$-split simplicial objects and $U$ preserves these colimits. The latter holds by \Cref{Usplitcolimits}. Moreover, we have already seen that $U$ is conservative in \Cref{properties}\ref{Uconservative} (resp.\ \Cref{leqproperties}\ref{leqUconservative} or \Cref{frobproperties}\ref{frobUconservative}).
\end{proof}

We finish this section by investigating the t-exactness properties of the functor $L$. Under the additional assumption that the t-structure on $\C$ is compatible with coproducts we get an analog of \Cref{Utexact} which is an important ingredient for the proof of our main theorem.

\begin{Lemma}\label{Ltexact}
    Let $\C$ be a stable $\infty$-category with a t-structure $(\C_{\geq 0},\C_{\leq 0})$. Suppose that $\C_{\leq 0}$ is stable under coproducts.
    \begin{enumerate}[label=(\alph*)]
        \item\label{CartLtexact} Let $F\colon \C \to \C$ be a t-exact endofunctor that preserves countable coproducts. Then the left adjoint of the forgetful functor $U\colon \Cart(\C,F) \to \C$ from \Cref{DefL} is t-exact (with respect to the induced t-structure on $\Cart(\C,F)$ described in \Cref{tstructure}).
        \item\label{leqLtexact} Let $\D$ be a stable $\infty$-category with a t-structure and $F\colon \C \to \D$ be a t-exact functor that preserves countable coproducts. Let $G\colon \C \to \D$ be a functor that admits a t-exact left adjoint. Then the left adjoint of the forgetful functor $U\colon \LEq(F,G) \to \C$ is t-exact.
        \item\label{FrobLtexact} Let $G\colon \C \to \C$ be an endofunctor that admits a t-exact left adjoint. Then the left adjoint of the forgetful functor $U\colon \Frob(\C,G) \to \C$ is t-exact.
    \end{enumerate}
\end{Lemma}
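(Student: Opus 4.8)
The plan is to reduce all three parts to the connectivity and coconnectivity of the single functor $\coprod_{n\geq 0}F^n$, exploiting that each t-structure involved is defined through the forgetful functor $U$. For part \ref{CartLtexact}, the t-structure on $\Cart(\C,F)$ of \Cref{tstructure} (legitimate with $G=\id_{\C}$, since $F$ is t-exact hence right t-exact, and $\id_{\C}$ is left t-exact) is characterized by: $M$ is connective (resp.\ coconnective) if and only if $UM\in\C_{\geq 0}$ (resp.\ $UM\in\C_{\leq 0}$). Hence $L$ is t-exact if and only if $UL$ is, and by \Cref{DefL} we have $UL\simeq\coprod_{n\geq 0}F^n$. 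So the claim becomes: for $X\in\C_{\geq 0}$ (resp.\ $X\in\C_{\leq 0}$) one has $\coprod_{n\geq 0}F^nX\in\C_{\geq 0}$ (resp.\ $\in\C_{\leq 0}$).

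For the core computation I would first use t-exactness of $F$ to conclude that every $F^n$ preserves $\C_{\geq 0}$ and $\C_{\leq 0}$, reducing everything to stability of these subcategories under countable coproducts. For $\C_{\geq 0}$ this holds unconditionally: writing $\C_{\geq 0}={}^{\perp}\C_{\leq -1}$, for any $Y\in\C_{\leq -1}$ the functor $\Map_{\C}(-,Y)$ carries the coproduct to a product of contractible mapping spaces, so the coproduct is again connective. For $\C_{\leq 0}$ this is precisely the standing hypothesis that $\C_{\leq 0}$ is stable under coproducts. This settles \ref{CartLtexact}; the asymmetry between the two halves is the only place the hypothesis on $\C_{\leq 0}$ enters.

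Parts \ref{leqLtexact} and \ref{FrobLtexact} I would then deduce formally. For \ref{leqLtexact}, \Cref{leqadjoint} gives an equivalence $\LEq(F,G)\simeq\Cart(\C,G^LF)$; tracing its construction (it is built over $\C\times\C$ and so commutes with the forgetful functors down to $\C$) shows that it is t-exact, both t-structures being defined through $U$. Consequently this equivalence carries the left adjoint of $U\colon\LEq(F,G)\to\C$ to the left adjoint of \Cref{DefL} for the functor $G^LF$, exactly as recorded in \Cref{leqLexists}. Now $G^LF$ is t-exact (both $G^L$ and $F$ are) and preserves countable coproducts ($F$ does by hypothesis, and $G^L$ does as a left adjoint), while $G$ is left t-exact because its left adjoint $G^L$ is right t-exact, so the t-structure on $\LEq(F,G)$ indeed exists. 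Thus \ref{CartLtexact} applies to $\Cart(\C,G^LF)$ and, transported across the t-exact equivalence, yields t-exactness of the left adjoint of $U\colon\LEq(F,G)\to\C$. Part \ref{FrobLtexact} is the special case $F=\id_{\C}$ of \ref{leqLtexact} (using $\Frob(\C,G)=\LEq(\id_{\C},G)$ and \Cref{frobLexists}), since $\id_{\C}$ is t-exact and preserves countable coproducts.

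The main obstacle, such as it is, is bookkeeping rather than mathematics: one must confirm that the equivalence of \Cref{leqadjoint} intertwines the two forgetful functors, so that the t-structures correspond and t-exactness transports. Once that is checked, parts \ref{leqLtexact} and \ref{FrobLtexact} follow immediately from \ref{CartLtexact}.
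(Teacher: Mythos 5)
Your proposal is correct and follows essentially the same route as the paper: since the induced t-structure is detected by the forgetful functor, t-exactness of $L$ reduces to showing $UL\simeq\coprod_{n\geq 0}F^n$ (resp.\ with $F$ replaced by $G^LF$ or $G^L$) preserves $\C_{\geq 0}$ and $\C_{\leq 0}$, which follows from t-exactness of the iterates together with closure of $\C_{\geq 0}$ under coproducts (automatic) and of $\C_{\leq 0}$ under coproducts (the hypothesis). Your write-up in fact supplies more detail than the paper's proof, e.g.\ the orthogonality argument for $\C_{\geq 0}$ and the verification that the equivalence of \Cref{leqadjoint} intertwines the forgetful functors.
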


\begin{proof}
    Note for \ref{leqLtexact} and \ref{FrobLtexact} that $G$ is left t-exact by \cite[Proposition 1.3.17(iii)]{BBD} because it is right adjoint to a t-exact functor. In particular, the $\infty$-categories $\LEq(F,G)$ and $\Frob(\C,G)$ carry the induced t-structure described in \Cref{tstructure}.
    
    In all three cases we denote the left adjoint of the forgetful functor by $L$. By \Cref{tstructure} we have to show that $UL(\C_{\geq 0}) \subseteq \C_{\geq 0}$ and that ${UL(\C_{\leq 0}) \subseteq \C_{\leq 0}}$. This follows directly from the definition of $L$ (\Cref{DefL}, \Cref{leqLexists}, \Cref{frobLexists}), the assumptions on $F$ and $G$ and the fact that $C_{\geq 0}$ and $\C_{\leq 0}$ are stable under coproducts.
\end{proof}

\begin{Corollary}\label{Lheart}
    Under the respective assumptions of the previous lemma there are commutative diagrams
    \begin{enumerate}[label=(\alph*)]
    \item\label{cartLheart} \[\begin{tikzcd}
        \C^{\heart} \ar{r}{L_{\C^{\heart}}} \ar[hook]{d} & \Cart(\C^{\heart},F^{\heart}) \ar[hook]{d} \\
        \C \ar{r}{L_{\C}} & \Cart(\C,F)
    \end{tikzcd}\]
    \item\label{leqLheart} \[\begin{tikzcd}
        \C^{\heart} \ar{r}{L_{\C^{\heart}}} \ar[hook]{d} & \LEq(F^{\heart},G^{\heart}) \ar[hook]{d} \\
        \C \ar{r}{L_{\C}} & \LEq(F,G)
    \end{tikzcd}\]
    \item\label{frobLheart} \[\begin{tikzcd}
        \C^{\heart} \ar{r}{L_{\C^{\heart}}} \ar[hook]{d} & \Frob(\C^{\heart},G^{\heart}) \ar[hook]{d} \\
        \C \ar{r}{L_{\C}} & \Frob(\C,G)
    \end{tikzcd}\]
    \end{enumerate}
    In each case the horizontal functors denote the left adjoint of the forgetful functor.
\end{Corollary}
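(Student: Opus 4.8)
The plan is to prove part \ref{leqLheart} and then read off \ref{cartLheart} and \ref{frobLheart} as the special cases $G = \id_{\C}$ and $F = \id_{\C}$, using the identities $\Cart(\C,F) = \LEq(F,\id_{\C})$ and $\Frob(\C,G) = \LEq(\id_{\C},G)$ of \Cref{DefCart}. Write $U\colon \LEq(F,G) \to \C$ for the forgetful functor and $L$ for its left adjoint. The idea is simply to show that the t-exact adjunction $L \dashv U$ restricts to an adjunction on hearts and then appeal to the uniqueness of adjoints.

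First I would assemble the inputs that are already available. The functor $U$ is t-exact: here $F$ is right t-exact and $G$ is left t-exact (the latter since $G$ is right adjoint to a t-exact functor), so \Cref{tstructure} applies and \Cref{Utexact} identifies $\LEq(F,G)^{\heart}$ with $\LEq(F^{\heart},G^{\heart})$ compatibly with the forgetful functors. Its left adjoint $L$ is t-exact by \Cref{Ltexact}. Consequently $L$ sends the full subcategory $\C^{\heart}$ into $\LEq(F^{\heart},G^{\heart})$ and $U$ sends $\LEq(F^{\heart},G^{\heart})$ into $\C^{\heart}$; denoting the restrictions by $L|_{\C^{\heart}}$ and $U_{\C^{\heart}}$, the square in \ref{leqLheart} commutes tautologically once its top horizontal arrow is taken to be $L|_{\C^{\heart}}$. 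The whole content therefore reduces to identifying $L|_{\C^{\heart}}$ with $L_{\C^{\heart}}$, the left adjoint of $U_{\C^{\heart}}$.

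To do this I would restrict the unit $u\colon \id_{\C} \to UL$ to $\C^{\heart}$ and verify the adjunction criterion of \cite[Proposition 5.2.2.8]{HTT} (as in the proof of \Cref{Lexists}): for $X \in \C^{\heart}$ and $M \in \LEq(F^{\heart},G^{\heart})$ the induced map $\Map_{\LEq(F^{\heart},G^{\heart})}(L|_{\C^{\heart}}X, M) \to \Map_{\C^{\heart}}(X, U_{\C^{\heart}}M)$ must be an equivalence. Since $\C^{\heart} \hookrightarrow \C$ and $\LEq(F^{\heart},G^{\heart}) \hookrightarrow \LEq(F,G)$ are \emph{full} subcategory inclusions, these mapping spaces coincide with $\Map_{\LEq(F,G)}(LX, M)$ and $\Map_{\C}(X, UM)$, and the map in question is exactly the equivalence supplied by $L \dashv U$. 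This exhibits $L|_{\C^{\heart}} \dashv U_{\C^{\heart}}$, so uniqueness of left adjoints yields $L|_{\C^{\heart}} \simeq L_{\C^{\heart}}$ and hence \ref{leqLheart}. The substantive work lies entirely in \Cref{Ltexact} and \Cref{Utexact}; the only step needing care is the reduction of the adjunction criterion to the fullness of the two heart inclusions, and I do not expect any serious obstacle beyond this.
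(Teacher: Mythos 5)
Your proposal is correct and follows essentially the same route as the paper: both arguments use the t-exactness of $L$ and $U$ (\Cref{Ltexact}, \Cref{Utexact}) to restrict the adjunction to the hearts and then conclude by uniqueness of left adjoints. The only cosmetic differences are that the paper proves part (a) and declares (b), (c) analogous while you prove (b) and specialize, and that the paper cites \cite[Proposition 1.3.17(iii)]{BBD} for the restricted adjunction where you verify it directly via the mapping-space criterion and fullness of the heart inclusions.
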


\begin{proof}
    We only prove \ref{cartLheart}, an analogous proof shows \ref{leqLheart} and \ref{frobLheart}. For the sake of clarity, we add an index to $L$ and $U$ to indicate to which category they refer.
    
    As $L_{\C}$ is t-exact by \Cref{Ltexact}, it restricts to a functor \[L_{\C}^{\heart}\colon \C^{\heart} \to \Cart(\C,F)^{\heart}\, .\] The same is true for the forgetful functor $U_{\C}\colon \Cart(\C,F) \to \C$ by \Cref{Utexact}. The restricted functors $L_{\C}^{\heart}$ and $U_{\C}^{\heart}$ are adjoint by \cite[Proposition 1.3.17(iii)]{BBD} because $L_{\C}$ is left adjoint to $U_{\C}$. But again by \Cref{Utexact}, $U_{\C}^{\heart} \simeq U_{\C^{\heart}}$, so $L_{\C}^{\heart}$ and $L_{\C^{\heart}}$ are left adjoints of equivalent functors. Hence, $L_{\C}^{\heart}$ and $L_{\C^{\heart}}$ are equivalent (cf.\ \cite[Remark 5.2.2.2]{HTT}).
\end{proof}
    \section{The derived \texorpdfstring{$\infty$}{infinity}-category of Cartier modules}\label{chap4}

In this section we state and prove the main theorem of this paper. The proof relies on a consequence of the $\infty$-categorical Barr-Beck Theorem \cite[Theorem 4.7.3.5]{HA} which we recall in \Cref{barrbeck}.

Recall that a Grothendieck abelian category is a presentable abelian category that has exact filtered colimits. In particular, Grothendieck abelian categories satisfy Grothendieck's axioms AB3, AB4 and AB5, cf.\ \cite[\href{https://stacks.math.columbia.edu/tag/079A}{Tag 079A}]{Stacks}.

Furthermore, if $\A$ is a Grothendieck abelian category we can define its derived $\infty$-category $\D(\A)$, cf.\ \cite[Definition 1.3.5.8]{HA}. The derived $\infty$-category carries a natural t-structure (cf.\ \cite[Definition 1.3.5.16]{HA}) whose heart is given by $\D(\A)^{\heart} \simeq \A$ (cf.\ \cite[Remark C.5.4.11]{SAG}).

For $G\colon\A \to \B$ a colimit-preserving exact functor between Grothendieck abelian categories there is an induced colimit-preserving t-exact functor 
\[\D(G)\colon \D(\A) \to \D(\B)\]
by \Cref{UPderived}. Moreover, we can also apply \Cref{UPderived} to the equivalence 
\[\Arr(\A) \xrightarrow{\simeq} \Arr\left(\D(\A)^{\heart}\right) \xrightarrow{\simeq} (\Arr(\D(\A)))^{\heart}\]
to get a functor $\D(\Arr(\A)) \to \Arr(\D(\A))$.

\begin{Theorem}\label{derived}
    Let $\A$ be a Grothendieck abelian category and $F\colon \A \to \A$ be an exact and colimit-preserving functor.
    Then $\Cart(\A,F)$ is a Grothendieck abelian category and the natural functor $\alpha\colon \D(\Cart(\A,F)) \to \Cart(\D(\A),\D(F))$ given by the universal property of the pullback and the diagram
    \[\begin{tikzcd}
        \D(\Cart(\A,F)) \ar[bend left]{rrd}{\DU} \ar{dd}[swap]{\D(\kappa)} \ar[dashed]{dr}{\alpha} & & \\
        & \Cart(\D(\A),\D(F)) \ar{r}{\UD} \ar{d}[swap]{\kappa_{\D(\A)}} & \D(\A) \ar{d}{(\D(F),\id)} \\
        \D(\Arr(\A)) \ar{r} & \Arr(\D(\A)) \ar{r}[swap]{(\ev_0, \ev_1)} & \D(\A) \times \D(\A)
    \end{tikzcd}\]
    is an equivalence of $\infty$-categories.
    
    Furthermore, the functor $\alpha$ is t-exact with respect to the usual t-structure on $\D(\Cart(\A,F))$ and the induced t-structure on $\Cart(\D(\A),\D(F))$ described in \Cref{tstructure}.
\end{Theorem}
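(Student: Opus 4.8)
The plan is to prove the three assertions in turn. The first is immediate: since $F$ is exact and colimit-preserving, \Cref{properties}\ref{Grothab} shows that $\Cart(\A,F)$ is a Grothendieck abelian category, so $\D(\Cart(\A,F))$ is defined and carries its standard t-structure. For the equivalence, I would exhibit both $\infty$-categories as monadic over $\D(\A)$ and then compare the resulting monads by the Barr--Beck criterion \Cref{barrbeck}. Since the comparison functor $\alpha$ is built from the universal property of the pullback, one has $\UD\,\alpha\simeq\DU$ by construction, so $\alpha$ is a functor over $\D(\A)$ and it suffices to identify the monads.

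Concretely, on one side $\D(\A)$ is presentable and $\D(F)$ preserves colimits, so the forgetful functor $\UD$ admits a left adjoint $\LD$ (described explicitly by \Cref{DefL}), and \Cref{monadicity}\ref{Cartmonadicity} makes $\UD$ monadic. On the other side, \Cref{monadicity} already shows that $U_{\A}\colon\Cart(\A,F)\to\A$ is monadic, and \Cref{Dmonadic} upgrades this to monadicity of the induced functor $\DU$, with left adjoint $\DL$. By \Cref{barrbeck}, $\alpha$ is then an equivalence if and only if the induced comparison map of monads $\theta\colon\UD\LD\to\DU\DL$ is an equivalence.

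To verify this I would reduce to the heart. Both monads are exact and colimit-preserving endofunctors of $\D(\A)$: the forgetful functors $\UD$ and $\DU$ are exact by \Cref{properties}\ref{exact} and their adjoints $\LD,\DL$ are exact as left adjoints, while explicitly $\UD\LD\simeq\coprod_{n\geq 0}\D(F)^n$ by \Cref{DefL}. Since $\D(\A)$ is generated under colimits and shifts by its heart $\D(\A)^{\heart}\simeq\A$, the full subcategory of objects on which $\theta$ is an equivalence is closed under colimits and shifts, and hence equals $\D(\A)$ once $\theta$ is an equivalence on $\A$. On the heart, \Cref{Lheart}\ref{cartLheart} applied to $\C=\D(\A)$ and $\D(F)$ (using that $\D(F)$ is t-exact, so $\D(F)^{\heart}=F$, and that the t-structure on $\D(\A)$ is compatible with coproducts) identifies $\LD|_{\A}$ with the abelian left adjoint $L_{\A}$, whence $\UD\LD(M)\simeq U_{\A}L_{\A}(M)=\coprod_{n\geq 0}F^nM$ for $M\in\A$; on the other side \Cref{Dmonadic} identifies $\DU\DL$ with the derived functor $\D(U_{\A}L_{\A})$ of the exact colimit-preserving abelian monad $U_{\A}L_{\A}=\coprod_{n\geq 0}F^n$, which is therefore t-exact and restricts to the same functor on $\A$.

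The main obstacle is precisely this last heart computation: one must check not only that the two a priori distinct adjoints $\LD$ and $\DL$ restrict to the same functor $L_{\A}$ on the heart, but, crucially, that under these identifications the comparison map $\theta$ genuinely becomes the \emph{identity} on $\A$ rather than merely an abstract isomorphism of monads. This is exactly where the t-exactness statements of \Cref{Utexact}, \Cref{Ltexact} and \Cref{Lheart}, together with the identification of the derived monad from \Cref{Dmonadic}, must be combined carefully by tracing the construction of $\theta$. The final assertion, t-exactness of $\alpha$, is then formal: by \Cref{tstructure} the t-structure on $\Cart(\D(\A),\D(F))$ is defined so that $\UD$ is t-exact and conservative, and since $\UD\,\alpha\simeq\DU$ is t-exact, an object $x$ lies in $\D(\Cart(\A,F))_{\geq 0}$ (resp.\ $_{\leq 0}$) if and only if $\UD\alpha(x)=\DU(x)$ does, so $\alpha$ preserves both aisles.
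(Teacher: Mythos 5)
Your overall strategy coincides with the paper's: Grothendieck abelianness via \Cref{properties}\ref{Grothab}, monadicity of both forgetful functors over $\D(\A)$ via \Cref{monadicity} and \Cref{Dmonadic}, the Barr--Beck criterion \Cref{barrbeck}, reduction of the monad comparison to the heart, and t-exactness of $\alpha$ from $\UD\alpha\simeq\DU$. One minor divergence: you justify the reduction to the heart by saying $\D(\A)$ is generated under colimits and shifts by $\A$ (true, but not proved in the paper), whereas the paper applies \Cref{UPderived} directly: the restriction functor $\LFun^{\tex}(\D(\A),\D(\A))\to\LFun^{\ex}(\A,\A)$ is fully faithful, so a natural transformation between two t-exact colimit-preserving functors is an equivalence once its restriction to the heart is. Either route works.

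The one genuine gap is the step you yourself flag as ``the main obstacle'' and then leave open: showing that, under the identifications of $\UD\LD|_{\A}$ and $\DU\DL|_{\A}$ with $U_{\A}L_{\A}\simeq\coprod_{n\geq 0}F^{n}$, the comparison map $\theta$ becomes the \emph{identity} rather than some other endomorphism. Knowing that source and target agree on the heart does not make $\theta$ invertible, so this must actually be checked. The paper's resolution is to unwind $\theta$ as the composite
\[
\UD\LD \xrightarrow{\;\D(u)\;} \UD\LD\,\D(U_{\A})\,\D(L_{\A}) \simeq \UD\LD\,\UD\,\alpha\,\D(L_{\A}) \xrightarrow{\;c_{\D(\A)}\;} \UD\,\alpha\,\D(L_{\A}) \simeq \DU\DL\,,
\]
i.e.\ the unit of $\DL\dashv\DU$ followed by the counit of $\LD\dashv\UD$. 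By \Cref{Utexact} and \Cref{Lheart}\ref{cartLheart} both adjunctions restrict on the heart to $L_{\A}\dashv U_{\A}$, so the map becomes $U_{\A}L_{\A}\xrightarrow{u}U_{\A}L_{\A}U_{\A}L_{\A}\xrightarrow{c}U_{\A}L_{\A}$, which is the identity by a triangle identity. Supplying this observation closes the gap; everything else in your proposal matches the paper's argument.
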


Note that the outer diagram in the theorem indeed commutes as this can be checked on hearts by \Cref{UPderived} where the bottom composition is given by $(\ev_0,\ev_1)$. So, it boils down to the commutative diagram defining $\Cart(\A,F)$.

\begin{Remark}
    By \Cref{UPderived}, the equivalence of the heart \[\Cart(\A,F) \simeq \Cart(\D(\A),\D(F))^{\heart}\] described in \Cref{heart} also induces a functor \[\D(\Cart(\A,F)) \to \Cart(\D(\A),\D(F)) \, .\]
    It can be shown that this functor is equivalent to the functor $\alpha$ of \Cref{derived} by checking it on the heart and applying \Cref{UPderived}.
\end{Remark}

To prove the theorem we use the following result which is a consequence of the $\infty$-categorical Barr-Beck Theorem \cite[Theorem 4.7.3.5]{HA}.

\begin{Proposition}\label{barrbeck}
    Suppose we are given a commutative diagram of $\infty$-categories
    \[\begin{tikzcd}
        \C \ar{rr}{\alpha} \ar{dr}[swap]{U} & &\C' \ar{dl}{U'} \\
        & \D &
    \end{tikzcd}\]
    and assume the following:
    \begin{enumerate}[label=(\arabic*)]
        \item\label{bb1} The functor $U$ exhibits $\C$ as monadic over $\D$.
        \item\label{bb2} The functor $U'$ exhibits $\C'$ as monadic over $\D$.
        \item\label{bb3} Denote the left adjoints of $U$ and $U'$ by $L$ and $L'$, respectively. For each object $D \in \D$, the unit map $D \to UL(D) \simeq U'\alpha L(D)$ induces an equivalence $L'(D) \to \alpha L(D)$ in $\C'$. 
    \end{enumerate}
    Then $\alpha$ is an equivalence.
\end{Proposition}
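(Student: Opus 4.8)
The plan is to recognize the statement as a comparison of two monadic adjunctions over the common base $\D$ and to reduce it to an equivalence of the associated monads. Writing $T \coloneqq UL$ and $T' \coloneqq U'L'$ for the two monads on $\D$, assumptions \ref{bb1} and \ref{bb2} together with the $\infty$-categorical Barr--Beck theorem \cite[Theorem 4.7.3.5]{HA} identify $\C \simeq \Mod_T(\D)$ and $\C' \simeq \Mod_{T'}(\D)$, under which $U$ and $U'$ become the forgetful functors and $L$, $L'$ the free functors. The strategy is then to show that assumption \ref{bb3} forces $T \simeq T'$ as monads, and that under these identifications $\alpha$ is the restriction-of-scalars functor along this equivalence --- hence itself an equivalence.

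First I would produce the comparison map of monads. Since $U'\alpha \simeq U$ exhibits $\alpha$ as a functor over $\D$, the mate of the identity $2$-cell $U'\alpha \Rightarrow U$ under the adjunctions $L \dashv U$ and $L' \dashv U'$ is a natural transformation $L' \to \alpha L$ (this construction needs only left adjoints of the two functors to $\D$, not of $\alpha$), and it is exactly the one adjoint to the unit $\id_{\D} \to UL \simeq U'\alpha L$ appearing in \ref{bb3}. Applying $U'$ yields $T' = U'L' \to U'\alpha L \simeq UL = T$, and the formal properties of the mate make this a morphism of monads. By \ref{bb3} the underlying transformation $L' \to \alpha L$ is an equivalence, so $T' \to T$ is an equivalence of endofunctors and therefore an equivalence of monads. (As a sanity check, in the classical setting where $\alpha$ arises from a monad map $\phi\colon T' \to T$ by restriction, this procedure recovers $\phi$ on underlying objects, confirming both the direction and the identification below.)

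It then remains to conclude. Restriction of scalars is functorial in the monad, and restriction along an equivalence of monads is an equivalence of module $\infty$-categories, an inverse being restriction along the inverse monad map. Thus, once $\alpha$ is identified with restriction along $T' \to T$, the claim follows. I expect this identification to be the main obstacle: it is \emph{not} enough that $\alpha$ commutes with the forgetful functors to $\D$ and sends free objects to free objects; one must check that the entire comparison is natural in the monad, so that $\alpha$ genuinely is restriction of scalars. This is precisely the content of the refined, functorial form of Barr--Beck in \cite[\S 4.7.3]{HA}, and it is the step I would invoke rather than reprove by hand.

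As a more hands-on alternative that sidesteps the functoriality bookkeeping, one could instead show directly that $\alpha$ is fully faithful and essentially surjective. The key preliminary observation is that $\alpha$ preserves colimits of $U$-split simplicial objects: if $X_{\bullet}$ is $U$-split then $\alpha X_{\bullet}$ is $U'$-split, since $U'\alpha X_{\bullet} \simeq U X_{\bullet}$; both $U$ and $U'$ preserve and detect such colimits by monadicity, and conservativity of $U'$ upgrades the canonical map $\colim \alpha X_{\bullet} \to \alpha \colim X_{\bullet}$ to an equivalence. Since by monadicity every object of $\C$ (resp.\ $\C'$) is the realization of its $U$-split (resp.\ $U'$-split) bar resolution by free objects, and $\alpha$ matches free objects via \ref{bb3} while preserving these realizations, both essential surjectivity and full faithfulness reduce to the equivalence $L' \simeq \alpha L$ on free objects. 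The obstacle in this route is instead the careful matching of the two bar resolutions through $\alpha$.
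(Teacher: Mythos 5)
Your proposal is correct and follows essentially the same route as the paper: the paper's proof is a one-line citation of \cite[Corollary 4.7.3.16]{HA} (together with \cite[Theorem 4.7.3.5]{HA}), which is stated precisely for a commutative triangle as above and packages exactly the monad-comparison argument you describe, so that the identification of $\alpha$ with restriction of scalars --- the step you rightly flag as the main obstacle --- never has to be carried out by hand. Your bar-resolution alternative would also work, but it is not what the paper does.
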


\begin{proof}
    This is a reformulation of \cite[Corollary 4.7.3.16]{HA} using \cite[Theorem 4.7.3.5]{HA}.
\end{proof}

So, to prove \Cref{derived} we show that for the diagram
\[\begin{tikzcd}
    \D(\Cart(\A,F)) \ar{rr}{\alpha} \ar{dr}[swap]{\DU} & & \Cart(\D(\A),\D(F)) \ar{dl}{\UD} \\
    & \D(\A) &
\end{tikzcd}\]
the assumptions \ref{bb1} to \ref{bb3} of \Cref{barrbeck} are satisfied where we keep the notation from \Cref{derived}. Note that the category $\Cart(\A,F)$ is Grothendieck abelian by \Cref{properties}\ref{Grothab}, so that it makes sense to write $\D(\Cart(\A,F))$. We start with the proof of \ref{bb1}.

\begin{Proposition}\label{DCartmonadic}
    Let $\A$ be a Grothendieck abelian category and $F\colon \A \to \A$ be an exact and colimit-preserving functor. Then the functor \[\D(U_{\A})\colon \D(\Cart(\A,F)) \to \D(\A)\] exhibits $\D(\Cart(\A,F))$ as monadic over $\D(\A)$.
\end{Proposition}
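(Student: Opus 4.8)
The plan is to verify the hypotheses of the $\infty$-categorical Barr--Beck theorem \cite[Theorem 4.7.3.5]{HA} directly for $\DU$, reducing everything to the monadicity of $U_{\A}$ at the level of abelian categories; this is the instance of the general derived-monadicity principle (\Cref{Dmonadic}) relevant here. First I would record the abelian-level input: since $\A$ is Grothendieck abelian it admits countable coproducts, and $F$ preserves them because it preserves all colimits, so by \Cref{Lexists} the forgetful functor $U_{\A}\colon \Cart(\A,F)\to\A$ admits a left adjoint $L_{\A}$ and, by \Cref{monadicity}\ref{Cartmonadicity}, is monadic. In particular $U_{\A}$ is exact, conservative, and colimit-preserving.

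The first thing to establish for $\DU$ is the existence of a left adjoint. Here I would observe that $L_{\A}$ is itself exact: by construction $U_{\A}L_{\A}\simeq\coprod_{n\geq 0}F^{n}$, and this functor is exact because each $F^{n}$ is exact and countable coproducts are exact in a Grothendieck abelian category (axiom AB4, which follows from AB5); since $U_{\A}$ is exact and conservative it reflects exact sequences (\Cref{properties}\ref{Uconservative}, \ref{abelian}), so $L_{\A}$ is exact. As $L_{\A}$ is moreover colimit-preserving (being a left adjoint), both $L_{\A}$ and $U_{\A}$ induce colimit-preserving, t-exact functors on derived $\infty$-categories by \Cref{UPderived}, and functoriality of $\D(-)$ carries the adjunction $L_{\A}\dashv U_{\A}$ to an adjunction $\D(L_{\A})\dashv\DU$. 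Consequently $\DU$ has a left adjoint and preserves all colimits; in particular it admits and preserves colimits of $\DU$-split simplicial objects, which disposes of two of the three Barr--Beck conditions.

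It remains to prove that $\DU$ is conservative, which I expect to be the main obstacle. Here I would combine t-exactness with the non-degeneracy of the standard t-structure on the derived $\infty$-category of a Grothendieck abelian category. Since $\DU$ is t-exact and induces $U_{\A}$ on hearts (using $\D(\A)^{\heart}\simeq\A$ and $\D(\Cart(\A,F))^{\heart}\simeq\Cart(\A,F)$, again via \Cref{UPderived}), for every object $X$ one has $\pi_{n}(\DU X)\simeq U_{\A}(\pi_{n}X)$ for all $n\in\ZZ$. Given a morphism $f$ with $\DU(f)$ an equivalence, each $U_{\A}(\pi_{n}f)$ is then an isomorphism, and since $U_{\A}$ is conservative it reflects isomorphisms, so every $\pi_{n}f$ is an isomorphism; non-degeneracy of the t-structure forces $f$ to be an equivalence. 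With conservativity in hand, \cite[Theorem 4.7.3.5]{HA} yields that $\DU$ exhibits $\D(\Cart(\A,F))$ as monadic over $\D(\A)$. The only genuine work is this conservativity step together with checking that the adjunction truly derives; everything else is a formal consequence of $U_{\A}$ being exact, colimit-preserving and monadic.
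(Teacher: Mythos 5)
Your proposal is correct and follows essentially the same route as the paper: establish that $U_{\A}$ is monadic with exact left adjoint $L_{\A}$ (via \Cref{Lexists}, \Cref{monadicity} and AB4), then transfer monadicity to derived $\infty$-categories. The paper packages the transfer step as a citation of \Cref{Dmonadic} (whose proof via \Cref{Dadjoint}, \Cref{Dcons} and \Cref{Dpi} is exactly the derived-adjunction and $\pi_n$-conservativity argument you spell out inline), so the two proofs coincide in substance.
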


\begin{proof}
    By \Cref{Lexists} the functor $U_{\A}\colon \Cart(\A,F) \to \A$ admits a left adjoint $L_{\A}$. Using \Cref{monadicity}\ref{Cartmonadicity} we see that $U_{\A}$ exhibits $\Cart(\A,F)$ as monadic over $\A$. Noting that coproducts in Grothendieck abelian categories are exact by axiom AB4, we get that $L_{\A}$ is an exact functor. Thus, we conclude by applying \Cref{Dmonadic}.
\end{proof}

\vspace{10 pt}

\begin{proof}[Proof of \Cref{derived}]
    First note that the category $\Cart(\A,F)$ is Grothendieck abelian by \Cref{properties}\ref{Grothab}. 
    
    Moreover, the t-exactness of the functor $\alpha$ follows immediately from the definition of the induced t-structure on $\Cart(\D(\A),\D(F))$ and the t-exactness of $\DU \simeq \UD \alpha$. 
    
    So, it remains to show that the natural functor \[\alpha\colon \D(\Cart(\A,F)) \to \Cart(\D(\A),\D(F))\] is an equivalence. As we already described above the proof proceeds by considering the commutative diagram
    \[\begin{tikzcd}
    \D(\Cart(\A,F)) \ar{rr}{\alpha} \ar{dr}[swap]{\DU} & & \Cart(\D(\A),\D(F)) \ar{dl}{\UD} \\
    & \D(\A) &
    \end{tikzcd}\]
    and checking that the assumptions \ref{bb1} to \ref{bb3} of \Cref{barrbeck} are satisfied.

    \ref{bb1} is \Cref{DCartmonadic}.
    
    For \ref{bb2} note that $\D(\A)$ is presentable by \cite[Proposition 1.3.5.21(1)]{HA} and that $\D(F)$ preserves colimits by definition. Hence, we can apply \Cref{monadicity}\ref{Cartmonadicity} to get \ref{bb2}.

    It remains to prove \ref{bb3}. We have to show that for each object $M \in \D(\A)$ there is an equivalence $\LD(M) \to \alpha \DL(M)$ induced by the unit of the adjunction $\DL \dashv \DU$. As $\UD$ is conservative it suffices to show that the map \[\UD \LD(M) \to \UD \alpha \DL (M) \simeq \DU \DL(M)\] is an equivalence. 
    
    The functor $\UD$ is t-exact by \Cref{Utexact} and colimit-preserving by \Cref{properties}\ref{colimits}. Note that the t-structure on $\D(\A)$ is compatible with filtered colimits by \cite[Proposition 1.3.5.21(3)]{HA}, and hence, the functor $\LD$ is t-exact by \Cref{Ltexact}\ref{CartLtexact}. The functor $\LD$ also preserves colimits as it is a left adjoint. Moreover, the functors $\DU$ and $\DL$ are t-exact and colimit-preserving by definition. Thus, we can apply \Cref{UPderived} to reduce to the case $M\in \D(\A)^{\heart}$.
    
    Denote by $H\colon \D(\A)^{\heart} \to \D(\A)$ the inclusion of the heart. Then there is a commutative diagram
    \[\begin{tikzcd}
        \pi_0 \UD \LD H(M) \ar{r} \ar{d}[swap]{\simeq} & \pi_0 \DU \DL H(M) \ar{d}{\simeq} \\
        U_{\A} L_{\A} (M) \ar{r}{\id} & U_{\A} L_{\A} (M)
    \end{tikzcd}\]
    where the left vertical arrow is an equivalence because of \Cref{Lheart}\ref{cartLheart}, \Cref{Utexact} and the fact that $\pi_0 H \simeq \id$ and the right vertical arrow is an equivalence by definition of $\DU$ and $\DL$. The commutativity follows because the top horizontal map is induced by the composition
    \begin{align*}
    \UD\LD &\xrightarrow[\hspace{20 pt}]{\D(u)} \UD\LD\D(U_{\A})\D(L_{\A}) \\
    &\xrightarrow[\hspace{20 pt}]{\simeq} \UD\LD\UD \alpha \D(L_{\A}) \\
    &\xrightarrow[\hspace{20 pt}]{c_{\D(\A)}} \UD\alpha\D(L_{\A}) \\
    &\xrightarrow[\hspace{20 pt}]{\simeq} \DU\DL
    \end{align*}
    where $\D(u)$ denotes the unit of the adjunction $\DL \dashv \DU$ and $c_{\D(\A)}$ the counit of the adjunction $\LD \dashv \UD$. When restricted to the heart these two adjunctions both become equivalent to the adjunction $L_{\A} \dashv U_{\A}$ and the map comes down to \[U_{\A}L_{\A} \xrightarrow{u} U_{\A}L_{\A}U_{\A}L_{\A} \xrightarrow{c} U_{\A}L_{\A}\] which is equivalent to the identity because of one of the triangle identities.
    
    The diagram shows that the map \[\pi_0 \UD \LD H(M) \to \pi_0\DU\DL H(M)\] is an equivalence for $M \in \D(\A)^{\heart}$. As we explained above, this implies that the map \[\UD \LD (M) \to \DU\DL(M)\] is an equivalence for all $M\in \D(\A)$. This completes the proof of \ref{bb3} and thus also the proof of the theorem.
\end{proof}

\begin{Corollary}\label{leqmainthm}
    Let $\A$ be a Grothendieck abelian category and $F, G\colon \A \to \A$ be exact and colimit-preserving functors. Assume that $G$ has an exact left adjoint $G^L$. Then $\LEq(F,G)$ is a Grothendieck abelian category and the natural functor \[\alpha\colon \D(\LEq(F,G)) \to \LEq(\D(F),\D(G)) \, ,\] defined analogously to the one in \Cref{derived}, is a t-exact equivalence of $\infty$-categories.
\end{Corollary}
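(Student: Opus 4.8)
The plan is to mimic the proof of \Cref{derived} almost verbatim, replacing $\Cart$ by $\LEq$ throughout and feeding in the $\LEq$-versions of the auxiliary results from \Cref{chap2} and \Cref{chap3}. First I would record that $\LEq(F,G)$ is Grothendieck abelian: this is immediate from \Cref{leqproperties}\ref{leqGrothab}, since $F$ and $G$ are exact and colimit-preserving. Hence $\D(\LEq(F,G))$ makes sense, and the comparison functor $\alpha\colon \D(\LEq(F,G)) \to \LEq(\D(F),\D(G))$ is defined exactly as in \Cref{derived} via the universal property of the pullback (the outer diagram commuting for the same reason, checked on hearts through \Cref{UPderived}). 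As in \Cref{derived}, its t-exactness is then formal: the forgetful functor $\UD$ is t-exact by \Cref{Utexact}, and $\DU \simeq \UD\alpha$ is t-exact because $U_{\A}$ is exact; since the induced t-structure on $\LEq(\D(F),\D(G))$ is reflected by $\UD$, the functor $\alpha$ is t-exact. It therefore remains to verify that $\alpha$ is an equivalence, which I would do by checking conditions \ref{bb1}--\ref{bb3} of \Cref{barrbeck} for the triangle formed by $\DU$ and $\UD$ over $\D(\A)$.

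The crucial new input, compared with the case $G=\id$ of \Cref{derived}, is the existence and shape of the left adjoints. Since $G^L \dashv G$ in the $2$-category of Grothendieck abelian categories with exact colimit-preserving functors, and $\D(-)$ sends this input to an adjunction $\D(G^L) \dashv \D(G)$, the functor $\D(G)$ admits the left adjoint $\D(G^L) \simeq \D(G)^L$. With this in hand, \Cref{leqLexists} produces a left adjoint $L_{\A}$ of $U_{\A}\colon \LEq(F,G)\to\A$, described as in \Cref{DefL} with $F$ replaced by $G^L F$; since $G^L F$ is exact and countable coproducts are exact by axiom AB4, the functor $L_{\A}$ is exact, so \Cref{Dmonadic} together with \Cref{monadicity}(b) yields \ref{bb1}. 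Applying \Cref{leqLexists} and \Cref{monadicity}(b) at the level of $\D(\A)$ — using that $\D(F)$ preserves countable coproducts and that $\D(G)$ admits the left adjoint $\D(G^L)$ — gives \ref{bb2} and produces the left adjoint $\LD$ of $\UD$.

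For \ref{bb3} I would follow the heart-reduction argument of \Cref{derived} unchanged. Since $L_{\A}$ is exact, $\DL \simeq \D(L_{\A})$ is t-exact; and $\LD$ is t-exact by \Cref{Ltexact}\ref{leqLtexact}, whose hypotheses hold because $\D(F)$ is t-exact and preserves countable coproducts, $\D(G)$ has the t-exact left adjoint $\D(G^L)$ (t-exact as $G^L$ is exact), and $\D(\A)_{\leq 0}$ is stable under coproducts by compatibility of the t-structure with filtered colimits. As all four functors $\DU,\DL,\UD,\LD$ are t-exact and colimit-preserving, \Cref{UPderived} reduces the claim that $\UD\LD(M) \to \DU\DL(M)$ is an equivalence to the case $M \in \D(\A)^{\heart}$, where \Cref{Lheart}\ref{leqLheart}, \Cref{Utexact} and the triangle-identity computation of \Cref{derived} identify the map with the identity of $U_{\A}L_{\A}(M)$. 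This verifies \ref{bb3} and hence, by \Cref{barrbeck}, that $\alpha$ is an equivalence.

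The main obstacle I anticipate is the very step that opens the second paragraph: promoting the abelian-level adjunction $G^L \dashv G$ to a derived adjunction $\D(G^L) \dashv \D(G)$. This is needed throughout — to apply \Cref{leqLexists} at the derived level (hence for \ref{bb2} and for the existence of $\LD$), and to know via \Cref{Ltexact}\ref{leqLtexact} that $\LD$ is t-exact — and it cannot be supplied for free by presentability, since a colimit-preserving functor between presentable stable categories need only have a \emph{right} adjoint. I would obtain it from the $2$-functoriality of $\D(-)$ on exact colimit-preserving functors (\Cref{UPderived}), which carries the unit and counit of $G^L \dashv G$ to those of $\D(G^L) \dashv \D(G)$. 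A variant of the whole argument instead identifies $\LEq(F,G) \simeq \Cart(\A,G^L F)$ and $\LEq(\D(F),\D(G)) \simeq \Cart(\D(\A),\D(G^L F))$ by \Cref{leqadjoint} (which again requires the derived adjunction) and transports the equivalence of \Cref{derived}; this route, however, incurs the extra task of checking that the transported equivalence is compatible with the forgetful and lax-structure maps, so that it coincides with the natural functor $\alpha$ — a compatibility that the direct Barr–Beck argument above avoids entirely.
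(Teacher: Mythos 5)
Your proof is correct, but it takes a genuinely different route from the paper. The paper's own proof is exactly the ``variant'' you describe in your closing sentences: it invokes \Cref{Dadjoint} to obtain $\D(G^L)\dashv\D(G)$, then uses \Cref{leqadjoint} to identify $\LEq(F,G)\simeq\Cart(\A,G^LF)$ and $\LEq(\D(F),\D(G))\simeq\Cart(\D(\A),\D(G^L)\D(F))$, and concludes by citing \Cref{derived} --- three sentences in total. That brevity comes at the cost you correctly flag: one should in principle check that the equivalence obtained by transport along \Cref{leqadjoint} agrees with the canonical functor $\alpha$ defined by the universal property of the pullback, and that it is t-exact for the stated t-structures; the paper leaves this implicit. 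Your direct Barr--Beck argument avoids that compatibility check entirely, at the price of re-running the proof of \Cref{derived} --- but the overhead is minimal because the paper has already supplied all the $\LEq$-versions of the needed inputs (\Cref{leqproperties} for Grothendieck abelianness, \Cref{leqLexists} and \Cref{monadicity} for the left adjoint and monadicity, \Cref{Ltexact} and \Cref{Lheart} for the t-exactness and heart identification of $L$), and the one genuinely new ingredient you isolate, the derived adjunction $\D(G^L)\dashv\D(G)$, is precisely the content of \Cref{Dadjoint}, which you could cite directly rather than re-deriving from the $2$-functoriality of $\D(-)$. Both arguments are sound; yours is more self-contained and arguably more honest about what $\alpha$ is, the paper's is shorter.
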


\begin{proof}
    Note that by \Cref{Dadjoint}, $\D(G^L)$ is left adjoint to $\D(G)$. So, by \Cref{leqadjoint} there are equivalences $\LEq(F,G) \simeq \Cart(\A,G^L F)$ and \[\LEq(\D(F),\D(G)) \simeq \Cart(\D(\A),\D(G^L) \D(F))\] of $\infty$-categories. Now we can apply \Cref{derived} to conclude.
\end{proof}

\begin{Corollary}\label{frobmainthm}
    Let $\A$ be a Grothendieck abelian category and $G\colon \A \to \A$ be an exact and colimit-preserving functor. Assume that $G$ has an exact left adjoint. Then $\Frob(\A,G)$ is a Grothendieck abelian category and the natural functor \[\alpha\colon \D(\Frob(\A,G)) \to \Frob(\D(\A),\D(G)) \, ,\] defined analogously to the one in \Cref{derived}, is a t-exact equivalence of $\infty$-categories.
\end{Corollary}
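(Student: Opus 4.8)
The plan is to deduce this directly from \Cref{leqmainthm} by specializing to $F = \id_{\A}$. Recall from \Cref{DefCart} that $\Frob(\C,G)$ is by definition the lax equalizer $\LEq(\id_{\C}, G)$, so that $\Frob(\A,G) = \LEq(\id_{\A}, G)$ and, on the derived side, $\Frob(\D(\A),\D(G)) = \LEq(\id_{\D(\A)}, \D(G))$. The identity functor $\id_{\A}$ is manifestly exact and colimit-preserving, while $G$ is exact, colimit-preserving, and admits an exact left adjoint by hypothesis. Thus the hypotheses of \Cref{leqmainthm} are satisfied with this choice of $F$ and $G$, and it produces both the fact that $\LEq(\id_{\A},G)$ is Grothendieck abelian and a t-exact equivalence $\D(\LEq(\id_{\A},G)) \xrightarrow{\simeq} \LEq(\D(\id_{\A}),\D(G))$.

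The one point I would verify carefully is that $\D(\id_{\A}) \simeq \id_{\D(\A)}$, so that the target $\LEq(\D(\id_{\A}), \D(G))$ coincides with $\LEq(\id_{\D(\A)}, \D(G)) = \Frob(\D(\A),\D(G))$. This follows from \Cref{UPderived}: the derived functor $\D(\id_{\A})$ is a colimit-preserving functor whose restriction to the heart $\D(\A)^{\heart} \simeq \A$ is $\id_{\A}$, and $\id_{\D(\A)}$ is another such functor; since a colimit-preserving functor out of $\D(\A)$ is determined by its restriction to the heart, the two agree. With this identification, one must also check that the functor $\alpha$ defined in the present statement (via the universal property of the pullback defining $\Frob(\D(\A),\D(G))$) matches the one furnished by \Cref{leqmainthm}; but both are characterized by the same universal property once $\D(\id_{\A})$ is identified with $\id_{\D(\A)}$, so this is pure bookkeeping.

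I do not expect any serious obstacle here: the entire content sits in \Cref{leqmainthm} (and ultimately in \Cref{derived}), and this corollary is a pure specialization. An equally valid alternative route would be to invoke \Cref{cartadjoint} to identify $\Frob(\A,G) \simeq \Cart(\A, G^L)$, where $G^L$ is the assumed exact left adjoint of $G$ (which is automatically colimit-preserving as a left adjoint), and then apply \Cref{derived} directly; but the specialization $F = \id_{\A}$ in \Cref{leqmainthm} is the most economical, so that is the path I would take.
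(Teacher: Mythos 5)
Your proof matches the paper's, which is the one-line specialization of \Cref{leqmainthm} to $F=\id_{\A}$; your extra checks (that $\D(\id_{\A})\simeq\id_{\D(\A)}$ via \Cref{UPderived} and that the two constructions of $\alpha$ agree) are correct and merely make explicit what the paper leaves implicit. No issues.
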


\begin{proof}
    Apply \Cref{leqmainthm} with $F = \id_{\A}$.
\end{proof}

If we apply \Cref{derived} with $F=\id_{\A}$ we recover the following well-known result about the category $\End(\A)$ of endomorphisms of $\A$.

\begin{Corollary}\label{endmainthm}
    Let $\A$ be a Grothendieck abelian category. Then there is a canonical equivalence $\D(\End(\A)) \xrightarrow{\simeq} \End(\D(\A))$ of $\infty$-categories.
\end{Corollary}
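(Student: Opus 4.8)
The plan is to obtain this as the special case $F = \id_{\A}$ of \Cref{derived}. First I would check that the hypotheses are met: the identity functor $\id_{\A}\colon \A \to \A$ is exact and colimit-preserving, trivially, so \Cref{derived} applies with $F = \id_{\A}$ and produces a t-exact equivalence $\D(\Cart(\A,\id_{\A})) \xrightarrow{\simeq} \Cart(\D(\A),\D(\id_{\A}))$.

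It then remains only to identify the four categories appearing here with the endomorphism categories in the statement. On the one hand, the endomorphism $\infty$-category $\End(\C)$ of an $\infty$-category $\C$ (in the sense of \cite[Definition 3.5]{BGT}) has objects the pairs $(c,f)$ consisting of an object $c \in \C$ together with an endomorphism $f\colon c \to c$; comparing with the description of objects and mapping spaces of $\LEq(\id_{\C},\id_{\C})$ in \Cref{leqsimplices} and \Cref{properties}\ref{map}, this is precisely $\Cart(\C,\id_{\C}) = \LEq(\id_{\C},\id_{\C})$. Applying this with $\C = \A$ gives $\End(\A) \simeq \Cart(\A,\id_{\A})$, and with $\C = \D(\A)$ gives $\End(\D(\A)) \simeq \Cart(\D(\A),\id_{\D(\A)})$. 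On the other hand, the induced functor $\D(\id_{\A})$ is equivalent to $\id_{\D(\A)}$: the assignment $G \mapsto \D(G)$ is functorial by \Cref{UPderived}, so it carries identities to identities.

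Putting these identifications together, \Cref{derived} yields the chain of equivalences
\[\D(\End(\A)) \simeq \D(\Cart(\A,\id_{\A})) \xrightarrow{\simeq} \Cart(\D(\A),\D(\id_{\A})) \simeq \Cart(\D(\A),\id_{\D(\A)}) \simeq \End(\D(\A)) \, ,\]
which is the desired canonical equivalence (and is moreover t-exact, by the t-exactness assertion in \Cref{derived}). I do not expect any genuine difficulty here: the only point that requires care is matching the definition of $\End(\C)$ from \cite{BGT} with the lax-equalizer description of $\Cart(\C,\id_{\C})$, and once that identification is recorded the corollary is an immediate instance of the main theorem.
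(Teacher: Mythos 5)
Your proposal is correct and coincides with the paper's own (essentially one-line) argument: the paper likewise obtains \Cref{endmainthm} by specializing \Cref{derived} to $F=\id_{\A}$ and identifying $\Cart(\C,\id_{\C})=\LEq(\id_{\C},\id_{\C})$ with $\End(\C)$. The extra care you take in matching the definition from \cite{BGT} and in noting $\D(\id_{\A})\simeq\id_{\D(\A)}$ is exactly the right bookkeeping, which the paper leaves implicit.
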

    \section{Applications}\label{chap5}

In this section we collect some applications of our main theorem. These include in particular the corresponding statements about (classical) Cartier modules on a scheme over a field of positive characteristic. We also construct a perverse t-structure on classical Cartier modules. As the perverse t-structure is usually denoted by cohomological notation in the literature, we chose to do the same here, although we used homological notation before.

\begin{Notation}
    For a scheme $X$, let $\Mod(X)$ 
    be the Grothendieck abelian category of $\O_X$-modules, and $\QCoh(X)$ be the subcategory of quasi-coherent $\O_X$-modules,
    which itself is Grothendieck abelian. 
    
    If $p>0$ is a prime and $X$ is an $\FF_p$-scheme, we denote by $F\colon X\to X$ the absolute Frobenius of $X$.
    Consider the associated pushforward functor $F_* \colon \Mod(X) \to \Mod(X)$.
    Since $F$ is affine, it is separated and quasi-compact \cite[\href{https://stacks.math.columbia.edu/tag/01S7}{Tag 01S7}]{Stacks},
    and hence it restricts to a functor $F_* \colon \QCoh(X) \to \QCoh(X)$ \cite[\href{https://stacks.math.columbia.edu/tag/01LC}{Tag 01LC}]{Stacks}.
    On both categories the functor $F_*$ is exact since $F$ is topologically the identity and thus does 
    nothing on stalks (viewed as abelian groups). 
    Hence, $F$ induces derived functors $\D(F_*) \colon \D(\Mod(X)) \to \D(\Mod(X))$ 
    and $\D(F_*) \colon \D(\QCoh(X)) \to \D(\QCoh(X))$ which are t-exact with respect to the standard t-structures. 
\end{Notation}

\begin{Corollary}\label{classicalCart}
    Let $p > 0$ be a prime and $X$ be an $\FF_p$-scheme. Then there are canonical t-exact equivalences of $\infty$-categories 
    \[\D(\Cart(\QCoh(X), F_*)) \xrightarrow{\simeq} \Cart(\D(\QCoh(X)),\D(F_*))\] 
    and 
    \[\D(\Cart(\Mod(X), F_*)) \xrightarrow{\simeq} \Cart(\D(\Mod(X)),\D(F_*)).\]
\end{Corollary}

\begin{proof}
    Apply \Cref{derived} with the Grothendieck abelian category $\QCoh(X)$ and the exact and colimit-preserving functor $F_*\colon \QCoh(X) \to \QCoh(X)$
    for the first equivalence.
    For the other replace $\QCoh(X)$ by $\Mod(X)$.
\end{proof}

\begin{Corollary}\label{classicalFrob}
     Let $p>0$ be a prime and $X$ be a regular Noetherian $\FF_p$-scheme. 
     Then there are canonical t-exact equivalences of $\infty$-categories 
    \[\D(\Frob(\QCoh(X), F_*)) \xrightarrow{\simeq} \Frob(\D(\QCoh(X)),\D(F_*))\] 
    and 
    \[\D(\Frob(\Mod(X), F_*)) \xrightarrow{\simeq} \Frob(\D(\Mod(X)),\D(F_*)).\]
\end{Corollary}

\begin{proof}
    First note that the functor $F^*\colon \QCoh(X) \to \QCoh(X)$ is left adjoint to 
    $F_*$ and exact by \cite[Theorem 2.1]{Kunz}. 
    Thus, for the first equivalence we can apply \Cref{frobmainthm} to $\A = \QCoh(X)$ and $G=F_*$.
    For the second equivalence replace $\QCoh(X)$ by $\Mod(X)$.
\end{proof}

Another application of \Cref{derived} is that it enables us to define t-structures on $\D(\Cart(\A,F))$ by giving an induced t-structure on $\Cart(\D(\A),\D(F))$ in the sense of \Cref{chap2}. 

As an example we construct a perverse t-structure on the bounded derived category of classical Cartier modules with coherent cohomology. Perverse t-structures in general were first studied in \cite{BBD} and later more generally by Gabber \cite{Gabber}. As they are usually denoted by cohomological notation in the literature, we chose to do the same here, although we used homological notation before.

\begin{Notation}
    If $\D$ is a stable $\infty$-category with a t-structure $(\D^{\le 0}, \D^{\ge 0})$,
    we write 
    \[\D^b \coloneqq \bigcup_{\substack{a \to \infty\\b\to -\infty}} \D^{\leq a} \cap \D^{\geq b} \]
    for the subcategory of bounded objects.
    Similarly, denote by 
    \[\D^+ \coloneqq \bigcup_{b \to -\infty} \D^{\geq b} \]
    the full subcategory of those objects that are bounded below.
\end{Notation}

We introduce some more notation. 
For a scheme $X$ and a point $x \in X$ we denote by $i_x \colon \Spec(\O_{X,x}) \to X$
the inclusion of the local scheme at $x$. Recall that there is an exact inverse image functor 
\[i_x^*\colon \Mod(X) \to \Mod(\O_{X,x})\]
and a left exact functor 
\[i_x^!\colon \Mod(X) \to \Mod(\O_{X,x})\]
given by taking sections with support in $\overline{\{x\}}$ 
(compare with \cite[Section IV.1, Variation 8]{RD}). 
We denote their derived, resp.\ right derived functors by 
\[i_x^*\colon \D(\Mod(X)) \to \D(\Mod(\O_{X,x}))\,,\]
resp.\ 
\[Ri_x^!\colon \D^+(\Mod(X)) \to \D^+(\Mod(\O_{X,x})) \, .\]

\begin{Remark}
    \emph{A priori} it is not clear that the right derived functor $Ri_x^!$ exists on the $\infty$-categorical level.
    We will not need this in the sequel, as we only need the value of $Ri_x^!$ on objects.
    Nonetheless, one can construct $Ri_x^!$ on the $\infty$-categorical level via the adjoint functor theorem,
    since $\D(i_{x,*})$ preserves all colimits by construction,
    and all involved derived $\infty$-categories are presentable.
\end{Remark}

\begin{Theorem}[{\cite[§6]{Gabber}}]
    Let $X$ be a Noetherian scheme. 
    Let $d\colon X \to \ZZ \cup \{\infty\}$ be a bounded below weak perversity function in the sense of \cite[§1]{Gabber}. 
    Define full subcategories ${}^d \D^{\geq 0}(\Mod(X))$ and ${}^d \D^{\leq 0}(\Mod(X))$ of $\D(\Mod(X))$ as follows:
    Let $M \in \D(\Mod(X))$ be an object. Then
    \begin{itemize}
        \item $M \in {}^d \D^{\leq 0}(\Mod(X))$ if we have $i_x^*(M) \in \D^{\leq d(x)}(\Mod(\O_{X,x}))$ for every point $x \in X$, and
        \item $M \in {}^d \D^{\geq 0}(\Mod(X))$ if $M \in \D^+(\Mod(X))$ and we have $Ri_x^!(M) \in \D^{\geq d(x)}(\Mod(\O_{X,x}))$ for every point $x \in X$.
    \end{itemize}
    These subcategories define a t-structure on $\D(\Mod(X))$, 
    which is called the \emph{perverse t-structure} (with respect to $d$).
\end{Theorem}


Observe that the standard t-structure is a special case of a perverse t-structure.

\begin{Lemma}\label{perversityzero}
    Let $X$ be a Noetherian scheme. 
    The perverse t-structure with respect to $d = 0$ 
    is equivalent to the standard t-structure on $\D(\Mod(X))$.
\end{Lemma}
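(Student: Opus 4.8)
The claim is that the perverse $t$-structure on $D^b_{\coh}(X)$ associated to the zero perversity function $p \equiv 0$ coincides with the standard $t$-structure. The natural strategy is to unwind both definitions and compare the defining conditions directly, point by point. The standard $t$-structure characterizes $D^{\leq 0}(X)$ and $D^{\geq 0}(X)$ by the vanishing of cohomology sheaves $\mathcal{H}^i(M)$ outside the appropriate range, whereas the perverse $t$-structure with $p \equiv 0$ characterizes membership via the \emph{stalk-wise} conditions $i_x^*(M) \in D^{\leq 0}(\Spec(\O_{X,x}))$ for all $x$, respectively $Ri_x^!(M) \in D^{\geq 0}(\Spec(\O_{X,x}))$ for all $x$. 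Since a $t$-structure is determined by either one of its two aisles (the other being the right orthogonal), it suffices to prove equality of the connective parts, say ${}^p D^{\leq 0}(X) = D^{\leq 0}(X)$; the coconnective parts then agree automatically.

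\textbf{First step: the $D^{\leq 0}$ comparison.} I would show that $M \in D^{\leq 0}(X)$ if and only if $i_x^*(M) \in D^{\leq 0}(\Spec(\O_{X,x}))$ for every $x \in X$. The functor $i_x^*$ is exact (it is localization at the prime corresponding to $x$), so it commutes with taking cohomology sheaves: $\mathcal{H}^i(i_x^* M) \simeq (\mathcal{H}^i M)_x$, the stalk at $x$. Hence $i_x^* M \in D^{\leq 0}(\Spec(\O_{X,x}))$ for all $x$ is equivalent to $(\mathcal{H}^i M)_x = 0$ for all $i > 0$ and all $x$, which in turn (since a quasi-coherent sheaf vanishes iff all its stalks vanish) is equivalent to $\mathcal{H}^i(M) = 0$ for all $i > 0$, i.e.\ $M \in D^{\leq 0}(X)$. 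This direction is essentially formal once exactness of $i_x^*$ is invoked.

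\textbf{Second step: deducing the $D^{\geq 0}$ comparison.} Rather than argue directly with $Ri_x^!$ (which is only left exact and more delicate), I would exploit that both ${}^p D^{\geq 0}(X)$ and $D^{\geq 0}(X)$ are the right orthogonal complements of their respective connective parts within $D^b_{\coh}(X)$. Since a $t$-structure on a stable (or triangulated) category is uniquely determined by its coconnective aisle as the right orthogonal of the shift of the connective aisle, the equality ${}^p D^{\leq 0}(X) = D^{\leq 0}(X)$ established in the first step forces ${}^p D^{\geq 1}(X) = D^{\geq 1}(X)$ as orthogonals, hence ${}^p D^{\geq 0}(X) = D^{\geq 0}(X)$ after shifting. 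This avoids unpacking $Ri_x^!$ entirely and reduces the whole lemma to the first step.

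\textbf{The main obstacle.} The subtle point is that the perverse $t$-structure is defined via \emph{two independent} conditions (one using $i_x^*$, one using $Ri_x^!$), and it is a nontrivial theorem (the cited result of Gabber/Achar--Bhatt) that these conditions actually do assemble into a $t$-structure; one cannot freely pass between the two aisles without knowing they are orthogonal complements. The cleanest route is to verify that the two aisles one gets from the standard $t$-structure satisfy the perverse defining conditions, and then invoke uniqueness of the orthogonal to conclude equality. The one technical check worth flagging is compatibility with boundedness and coherence: one must confirm that the standard truncations preserve $D^b_{\coh}(X)$ (true since $X$ is Noetherian), so that the comparison genuinely takes place inside $D^b_{\coh}(X)$ and not merely in the ambient $D(X)$.
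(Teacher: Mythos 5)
Your proposal is correct and follows essentially the same route as the paper's proof: reduce to the connective aisles since a t-structure is determined by its connective part, then identify ${}^0 D^{\leq 0}(X)$ with $D^{\leq 0}(X)$ using exactness of the localizations $i_x^*$ and the fact that they are jointly conservative. The extra remarks you add (relying on the cited theorem that the perverse conditions do form a t-structure, and that truncations preserve $D^b_{\coh}(X)$ over a Noetherian scheme) are sensible but do not change the argument.
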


\begin{proof}
    As t-structures are uniquely determined by their connective part it suffices to show 
    ${}^0 \D^{\leq 0}(\Mod(X)) = \D^{\leq 0}(\Mod(X))$.
    The lemma thus follows from the fact that the stalk functors $i_x^*$ are jointly conservative and exact 
    (and hence commute with cohomology).
\end{proof}

We now want to define the induced perverse t-structure on classical Cartier modules. 
To be able to apply the results from \Cref{chap2} we first have to show that 
the Frobenius pushforward functor is right t-exact. 
We more generally show the following:

\begin{Lemma}\label{F_*perversetexact}
    Let $X$ be a Noetherian scheme,
    and let $G \colon X \to X$ be an endomorphism that is 
    topologically the identity 
    (e.g.\ $X$ is a positive characteristic scheme, and $G = F$ is the Frobenius endomorphism).
    Then $G_* \colon \D(\Mod(X)) \to \D(\Mod(X))$ is right t-exact with respect to 
    the perverse t-structure with respect to any bounded below weak perversity function $d \colon X \to \ZZ \cup \{\infty\}$.
\end{Lemma}

\begin{proof}
    Since $G$ is topologically the identity, 
    for every $M \in \Mod(X)$ we have $(G_*M)(U) = M(G^{-1} U) = M(U)$ (as abelian groups),
    and in particular, for every $x \in X$ it follows that $(G_*M)_x = M_x$.
    In particular, $G_* \colon \Mod(X) \to \Mod(X)$ is exact. 
    Thus, the derived functor $\D(G_*) \colon \D(\Mod(X)) \to \D(\Mod(X))$
    exists and is t-exact for the standard t-structures.

    Let $M \in {}^d \D^{\leq 0}(\Mod(X))$, i.e.\ we have that 
    $M_x \cong i^*_x M \in \D^{\leq d(x)}(\Mod(\O_{X,x}))$ for every point $x \in X$.
    We have to see that the same is true for $(\D(G_*) (M))_x \cong i_x^* \D(G_*) (M)$.
    This is clear since $(\D(G_*)(M))_x \cong M_x$, as this can be checked on cohomology.
\end{proof}

\begin{Theorem}\label{perversethm}
    Let $p > 0$ be a prime and $X$ be a Noetherian $\FF_p$-scheme. 
    Then the perverse t-structure with respect to any bounded below weak perversity function $d \colon X \to \ZZ \cup \{ \infty \}$ 
    induces a perverse t-structure on $\D(\Cart(\Mod(X), F_*))$ such that the forgetful functor 
    $\D(U)\colon \D(\Cart(\Mod(X), F_*)) \to \D(\Mod(X))$ is t-exact for the perverse t-structures.
\end{Theorem}

\begin{proof}
    We showed in \Cref{classicalCart} that there is an equivalence of $\infty$-categories
    \[ \alpha\colon \D(\Cart(\Mod(X), F_*)) \xrightarrow{\simeq} \Cart(\D(\Mod(X)),\D(F_*))\]
    which is t-exact for the standard t-structure on $\D(\Cart(\Mod(X), F_*))$ and the induced t-structure on $\Cart(\D(\Mod(X)),\D(F_*))$. 
    Using this, it is left to show that there is an induced perverse t-structure on 
    $\Cart(\D(\Mod(X)),\D(F_*))$. 
    This follows directly from \Cref{F_*perversetexact} and \Cref{tstructure}.
    The t-exactness of the forgetful functor $\D(U)\colon \D(\Cart(\Mod(X), F_*)) \to \D(\Mod(X))$ follows from the definition of $\alpha$ because $\D(U)$ can be written as the composition
    \[\D(\Cart(\Mod(X), F_*)) \xrightarrow{\alpha} \Cart(\D(\Mod(X)),\D(F_*)) \xrightarrow{U} \D(\Mod(X))\]
    of two t-exact functors.
\end{proof}

Under suitable conditions, Gabber shows that the perverse t-structure on $\D(\Mod(X))$ 
restricts to a t-structure on derived modules with quasi-coherent (resp.\ coherent) cohomology.
We now show that this also works for derived Cartier modules. For this, we need the following definitions.

\begin{Definition}
    Let $p>0$ be a prime and $X$ be an $\FF_p$-scheme. 
    A (classical) Cartier module $M \in \Cart(\Mod(X), F_*)$ is called 
    \emph{quasi-coherent} (resp.\ \emph{coherent}) if its underlying $\O_X$-module 
    $UM$ is quasi-coherent (resp.\ coherent).
\end{Definition}

\begin{Notation}
    Let $p>0$ be a prime and $X$ be an $\FF_p$-scheme. 
    We write 
    \[ \D_{\mathrm{coh}}(\Cart(\Mod(X), F_*)) \subset \D_{\mathrm{qcoh}}(\Cart(\Mod(X), F_*)) \subset \D(\Cart(\Mod(X), F_*)) \]
    for the full subcategories consisting of those derived Cartier modules 
    that have coherent or quasi-coherent cohomology, respectively.

    Similarly, we define $\D_{\mathrm{coh}}(\Mod(X)) \subset \D_{\mathrm{qcoh}}(\Mod(X)) \subset \D(\Mod(X))$.
\end{Notation}

\begin{Lemma} \label{qcoh-cart}
    Let $p > 0$ be a prime and $X$ be a Noetherian $\FF_p$-scheme. 
    The equivalence $\alpha \colon \D(\Cart(\Mod(X), F_*)) \to \Cart(\D(\Mod(X)), \D(F_*))$
    from \Cref{classicalCart} restricts to an equivalence 
    \[ \D_{\mathrm{qcoh}}(\Cart(\Mod(X), F_*)) \xrightarrow{\simeq} \Cart(\D_{\mathrm{qcoh}}(\Mod(X)), \D(F_*)). \]
    If moreover $F$ is finite, then this further restricts to an equivalence 
    \[ \D_{\mathrm{coh}}(\Cart(\Mod(X), F_*)) \xrightarrow{\simeq} \Cart(\D_{\mathrm{coh}}(\Mod(X)), \D(F_*)). \]
\end{Lemma}
\begin{proof}
    Since $F$ is affine, it is in particular separated and quasi-compact \cite[\href{https://stacks.math.columbia.edu/tag/01S7}{Tag 01S7}]{Stacks}, hence 
    $\D(F_*)$ preserves quasi-coherent cohomology \cite[\href{https://stacks.math.columbia.edu/tag/01LC}{Tag 01LC}]{Stacks}.
    If $F$ is finite (hence proper), then $\D(F_*)$ moreover preserves coherent cohomology \cite[\href{https://stacks.math.columbia.edu/tag/02O5}{Tag 02O5}]{Stacks}.
    Hence, the $\infty$-categories on the right-hand side are defined.

    Since limits of fully faithful functors are fully faithful, we indeed have that 
    $\Cart(\D_{\mathrm{qcoh}}(\Mod(X)), \D(F_*)) \to \Cart(\D(\Mod(X)), \D(F_*))$ is again fully faithful,
    and similarly for the coherent version.

    It thus suffices to compare the respective subcategories on both sides of the equivalence $\alpha$.
    Since $\alpha$ commutes with the forgetful functors, and since the forgetful functors are t-exact,
    they agree.
\end{proof}

\begin{Proposition}
    Let $p > 0$ be a prime, $X$ be a Noetherian $\FF_p$-scheme and $d \colon X \to \ZZ \cup \{ \infty \}$ a bounded below weak perversity function.
    Consider the perverse t-structure on $\D(\Cart(\Mod(X), F_*))$.
    \begin{enumerate}[label=(\alph*)]
        \item If $d$ is a strong perversity function \cite[§1]{Gabber}, then this t-structure restricts to $\D_{\mathrm{qcoh}}(\Cart(\Mod(X), F_*))$.
        \item If $F$ is finite and $d$ is a finite-valued strong perversity function (i.e.\ lands in $\ZZ \subset \ZZ \cup \{ \infty \}$) satisfying \cite[condition (c) from §9]{Gabber}, then this t-structure restricts to $\D_{\mathrm{coh}}(\Cart(\Mod(X), F_*))$.
    \end{enumerate}
\end{Proposition}
\begin{proof}
    If $d$ is a strong perversity function, then the perverse t-structure on $\D(\Mod(X))$ restricts to $\D_{\mathrm{qcoh}}(\Mod(X))$ 
    by \cite[Remarks 6.1 (6)]{Gabber}. Hence, in this case the proposition immediately follows from \Cref{qcoh-cart,F_*perversetexact,tstructure}.

    For the second part, since $X$ is Noetherian and $F$ is finite, $X$ admits a dualizing complex \cite[Remark 13.6]{Gabber}.
    Hence, if $d$ is moreover finite-valued and strong, then the perverse t-structure on $\D(\Mod(X))$ restricts to $\D_{\mathrm{coh}}(\Mod(X))$ 
    by \cite[Theorem 9.1]{Gabber}. Hence, the same proof as above works.
\end{proof}

Let $X$ be a Noetherian scheme that admits a dualizing complex $\omega_X^{\bullet}$.
By \cite[Section V.7]{RD} the complex $Ri_x^!\omega_X^{\bullet}$ is concentrated in a single homological degree $d(x)$ for each point $x\in X$. 
The obtained map $d\colon X \to \ZZ$ is a finite-valued strong perversity function satisfying \cite[condition (c) from §9]{Gabber}.
Let us write $\operatorname{Coh}(X) \subset \QCoh(X)$ for the full subcategory of coherent modules.
Assume that $F$ is finite.
Using duality theory, Baudin defined in \cite[Definition 5.2.1]{Baudin} a perverse t-structure on
$\D^b(\Cart(\operatorname{Coh}(X), F_*))$ for the perversity function $d$ described above.
We can compare this t-structure with the perverse t-structure on $\D_{\mathrm{coh}}(\Cart(\Mod(X), F_*))$.

\begin{Corollary} \label{baudin}
    Let $p>0$ be a prime and $X$ be a Noetherian $\FF_p$-scheme that admits a dualizing complex.
    Assume moreover that the absolute Frobenius $F$ of $X$ is finite.
    The canonical functor 
    \begin{equation*}
        \D^b(\Cart(\operatorname{Coh}(X), F_*)) \to \D_{\mathrm{coh}}(\Cart(\Mod(X), F_*))
    \end{equation*}
    is perverse t-exact.
\end{Corollary}

\begin{proof}
    This is clear as this can be checked after applying the perverse t-exact conservative forgetful functor $\D(U)$.
\end{proof}

\begin{Remark}
    In the last corollary, if one furthermore assumes that $X$ has affine diagonal,
    one can show that the canonical functor 
    \begin{equation*}
        \D^b(\Cart(\operatorname{Coh}(X), F_*)) \to \D^b_{\mathrm{coh}}(\Cart(\Mod(X), F_*))
    \end{equation*}
    is an equivalence, which is t-exact for the perverse t-structures.
\end{Remark}
    \appendix
\section{Derived \texorpdfstring{$\infty$}{infinity}-categories}\label{appendix}

In this appendix we state and prove some facts about derived $\infty$-categories of Grothendieck abelian categories that we use to prove our main theorem.

Recall that a t-structure $(\C_{\geq 0},\C_{\leq 0})$ on an $\infty$-category $\C$ is called right complete if the natural functor 
\[\C \xrightarrow{\simeq} \lim(\dots \to \C_{\geq 0} \xrightarrow{\taug{1}} \C_{\geq 1} \xrightarrow{\taug{2}} \C_{\geq 2} \to \dots)\]
is an equivalence of $\infty$-categories. It is called right separated if $\bigcap_{n} \C_{\leq n} = 0$ and left separated if $\bigcap_{n} \C_{\geq n} = 0$.

\begin{Notation}
    Let $\A$ be a Grothendieck abelian category. We denote by $\D(\A)$ the derived $\infty$-category of $\A$, cf.\ \cite[Definition 1.3.5.8]{HA}. 
    
    It carries a t-structure $(\D(\A)_{\geq 0}, \D(\A)_{\leq 0})$ (cf.\ \cite[Definition 1.3.5.16]{HA}) which is accessible, right complete and compatible with filtered colimits by \cite[Proposition 1.3.5.21]{HA}. 
    
    For each $n$ we denote by $\iotag{n}\colon \D(\A)_{\geq n} \leftrightarrows \D(\A)\colon \taug{n}$ the adjunction of the inclusion and the connective cover functor and by $\taul{n}\colon \D(\A) \leftrightarrows \D(\A)_{\leq n}\colon \iotal{n}$ the adjunction of the truncation functor and the inclusion. 
    
    Furthermore, the heart of the t-structure is given by $\D(\A)^{\heart} \simeq \A$ (cf.\ \cite[Remark C.5.4.11]{SAG}) and we denote the inclusion of the heart by $H\colon \A \to \D(\A)$.
\end{Notation}

The next proposition can be seen as a universal property of derived $\infty$-categories. It shows that t-exact and colimit-preserving functors out of a derived $\infty$-category are uniquely determined by their restrictions to the heart.

\begin{Proposition}\label{UPderived}
    Let $\A$ be a Grothendieck abelian category and $\C$ a presentable stable $\infty$-category. Assume that $\C$ carries a right complete and left separated t-structure $(\C_{\geq 0},\C_{\leq 0})$ that is compatible with filtered colimits. Then the restriction functor \[\LFun^{\tex} ( \D(\A), \C) \to \LFun^{\ex}(\A, \C^{\heart}), \quad G \mapsto \pi_0 G H\] is an equivalence, where objects in the source are colimit-preserving t-exact functors from $\D(\A)$ to $\C$ and objects in the target are colimit-preserving exact functors from $\A$ to $\C^{\heart}$.
\end{Proposition}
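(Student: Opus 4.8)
The strategy is to identify the restriction functor $\rho\colon G \mapsto \pi_0 G H$ as an equivalence by combining two inputs: the fact that the heart $\A \simeq \D(\A)^{\heart}$ generates $\D(\A)$ under colimits and shifts, and the reconstruction of functors from their behaviour on the heart afforded by right completeness and left separatedness. First I would check that $\rho$ is well defined. Since $G$ is t-exact it carries $\D(\A)^{\heart}$ into $\C^{\heart}$, so the composite $GH\colon \A \to \C$ factors through the inclusion $\iota\colon\C^{\heart}\hookrightarrow\C$ and $\pi_0 G H \simeq GH$ as a functor into $\C^{\heart}$. This restriction is exact, because $G$ preserves fibre sequences and t-exactness keeps the relevant short exact sequences inside the heart, and it preserves colimits, because colimits of heart-valued diagrams are computed by applying $\pi_0$ to the colimit taken in $\D(\A)$; here I use that the t-structure on $\D(\A)$ is compatible with filtered colimits (see \cite[Proposition 1.3.5.21]{HA}). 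Hence $\rho(G)\in\LFun^{\ex}(\A,\C^{\heart})$.

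For fully faithfulness I would use that the smallest full subcategory of $\D(\A)$ containing $\A$ and closed under colimits and shifts is all of $\D(\A)$, i.e.\ that $\A$ generates $\D(\A)$ as a localizing subcategory; this follows because right completeness and left separatedness reduce an arbitrary object to its bounded truncations, bounded objects are finite extensions of shifts of objects in the heart, and every connective object is a geometric realization of a simplicial object in $\A$. Granting this, a colimit-preserving functor on $\D(\A)$ is the left Kan extension of its restriction along $H$, so restriction $\LFun(\D(\A),\C)\to\Fun(\A,\C)$ is fully faithful \cite{HTT}; composing with $\iota$ and $\pi_0$ and using t-exactness to land in $\C^{\heart}$, I conclude that $\rho$ is fully faithful.

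For essential surjectivity---the heart of the matter---I would take an exact colimit-preserving $g\colon\A\to\C^{\heart}$ and form the left Kan extension $G\coloneqq\operatorname{Lan}_H(\iota g)\colon\D(\A)\to\C$, which exists and is colimit-preserving because $\D(\A)$ is presentable and $\A$ generates it under colimits. Since $H$ is fully faithful, the restriction of $G$ along $H$ recovers $\iota g$, whence $\pi_0 G H \simeq g$; thus it only remains to verify that $G$ is t-exact. Right t-exactness is immediate: connective objects are colimits of objects of $\A$, the functor $G$ preserves colimits, $g$ lands in $\C^{\heart}\subseteq\C_{\geq 0}$, and $\C_{\geq 0}$ is closed under colimits, so $G(\D(\A)_{\geq 0})\subseteq\C_{\geq 0}$.

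The main obstacle is left t-exactness of $G$, that is, $G(\D(\A)_{\leq 0})\subseteq\C_{\leq 0}$. This is exactly the place where one must use that $g$ is left exact and not merely right exact, and it cannot be read off from the colimit-generation alone. My plan is to reduce to coconnective objects via the fibre sequences $\iotag{-n}\taug{-n}X \to X \to \iotal{-n-1}\taul{-n-1}X$ and right completeness, to control $G$ on each truncated piece using the exactness of $g$, and then to pass to the limit using left separatedness of the t-structure on $\C$ together with its compatibility with filtered colimits, which prevents the reconstruction from introducing positive homotopy objects. Verifying that these completeness hypotheses interact correctly to preserve \emph{full} t-exactness---rather than only one-sided t-exactness---is the most delicate bookkeeping in the argument, and is the analogue at the level of this universal property of the coherence phenomenon highlighted in the introduction.
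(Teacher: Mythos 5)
There is a genuine gap in your essential surjectivity (and, for the same reason, in your fully faithfulness) argument. You define the extension as $G\coloneqq\operatorname{Lan}_H(\iota g)$, a left Kan extension along $H\colon\A\to\D(\A)$. But $\A$ generates $\D(\A)$ only under colimits \emph{and desuspensions}, and a left Kan extension only encodes the colimits: the smallest full subcategory of $\D(\A)$ closed under colimits and containing $\A$ is $\D(\A)_{\geq 0}$, not $\D(\A)$. Concretely, $\operatorname{Lan}_H(\iota g)(X)$ is a colimit of objects $\iota g(B)\in\C^{\heart}\subseteq\C_{\geq 0}$, and $\C_{\geq 0}$ is closed under colimits, so $G$ takes values in $\C_{\geq 0}$. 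Hence $G(\Sigma^{-1}HA)$ is connective while t-exactness would force it to lie in $\C^{\heart}[-1]$; the two intersect only in $0$. So $G$ is not t-exact -- indeed not even exact, since it cannot commute with $\Sigma^{-1}$ -- and no amount of bookkeeping with truncations will repair this: the functor you constructed is simply the wrong one on non-connective objects. The same false premise ("a colimit-preserving functor on $\D(\A)$ is the left Kan extension of its restriction along $H$") underlies your fully faithfulness step.

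The fix, and what the paper does, is a two-step factorization through the connective part. First, restriction to the heart gives an equivalence $\LFun^{\lex}(\D(\A)_{\geq 0},\C_{\geq 0})\simeq\LFun^{\ex}(\A,\C^{\heart})$ by \cite[Theorem C.5.4.9]{SAG}; this is the level at which your colimit-generation/geometric-realization picture is actually valid, since $\D(\A)_{\geq 0}$ \emph{is} generated under colimits by $\A$. Second, right completeness of the t-structure identifies $\D(\A)$ with $\lim_n\D(\A)_{\geq -n}$, and \cite[Remark C.1.2.10, Propositions C.3.1.1 and C.3.2.1]{SAG} upgrade a (left exact) colimit-preserving functor on $\D(\A)_{\geq 0}$ uniquely to a (t-exact) colimit-preserving functor on all of $\D(\A)$; it is this second step that supplies the desuspensions your Kan extension misses. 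Your instinct that left t-exactness is the delicate point is reasonable, but the real obstruction appears one step earlier, in the construction of the extension itself.
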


\begin{proof}
    By \cite[Theorem C.5.4.9]{SAG} the restriction to the heart induces an equivalence 
    \[\LFun^{\lex}(\D(\A)_{\geq 0}, \C_{\geq 0}) \xrightarrow{\simeq} \LFun^{\ex}(\A,\C^{\heart})\]
    where objects in the source are colimit-preserving and left exact functors from $\D(\A)_{\geq 0}$ to $\C_{\geq 0}$. Therefore, it suffices to show that the restriction functor 
    \[\LFun^{\tex} ( \D(\A), \C) \to \LFun^{\lex}(\D(\A)_{\geq 0}, \C_{\geq 0})\]
    is an equivalence. 
    
    By \cite[Proposition 1.3.5.21(3)]{HA} the t-structure on $\D(\A)$ is right complete, i.e.\ $\D(\A) \xrightarrow{\simeq} \lim_{n} \D(\A)_{\geq -n}$. Thus, combining \cite[Remark C.1.2.10]{SAG} and \cite[Proposition C.3.1.1]{SAG} we get an equivalence 
    \[\LFun^{\rtex} ( \D(\A), \C) \xrightarrow{\simeq} \LFun(\D(\A)_{\geq 0}, \C_{\geq 0})\] 
    where objects in the source are colimit-preserving right t-exact functors from $\D(\A)$ to $\C$ and objects in the target are colimit-preserving functors from $\D(\A)_{\geq 0}$ to $\C_{\geq 0}$. But by \cite[Proposition C.3.2.1]{SAG} this induces an equivalence 
    \[\LFun^{\tex} ( \D(\A), \C) \xrightarrow{\simeq} \LFun^{\lex}(\D(\A)_{\geq 0}, \C_{\geq 0}) \, .\]
\end{proof}

In particular, by \cite[Remark C.5.4.11]{SAG} the above lemma applies if we take $\C$ to be the derived $\infty$-category $\D(\B)$ of a Grothendieck abelian category $\B$. 

\begin{Notation}
    Let $\A$ and $\C$ be as in \Cref{UPderived}. We denote by
    \[\D\colon \LFun^{\ex}(\A,\C^{\heart}) \to \LFun^{\tex}(\D(\A),\C), \quad G \mapsto \D(G)\]
    the inverse to the restriction functor.
\end{Notation}

In the following we investigate the properties of the functor $\D(G)$ if $G\colon \A \to \C^{\heart}$ is an exact and colimit-preserving functor. In the case where $\C$ is also a derived $\infty$-category, we show that $\D$ preserves adjoint functors and monadicity. To that end we start by showing that $\D$ preserves conservativity of functors. For that we need the following lemma.

\begin{Lemma}\label{Dpi}
    Let $\A$ and $\C$ be as in \Cref{UPderived} and $G\colon\A\to\C^{\heart}$ an exact and colimit-preserving functor. Then there is an equivalence $\pi_n \D(G) \simeq G \pi_n$ of functors $\D(\A) \to \C^{\heart}$ for each $n$.
\end{Lemma}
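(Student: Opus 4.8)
The plan is to lean on the two defining features of $\D(G)$ provided by \Cref{UPderived}: it is colimit-preserving and t-exact, and its restriction to the heart recovers $G$. Since $\D(G)$ preserves all colimits between stable $\infty$-categories it is in particular exact, hence commutes with the shift functor $\Sigma$. Because $\pi_n = \taug{0}\taul{0}\Sigma^{-n}$, it therefore suffices to commute $\D(G)$ past the two truncation functors $\taug{0}$ and $\taul{0}$ and past $\Sigma^{-n}$, the latter being immediate from exactness.

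First I would record the standard fact that a t-exact (hence exact) functor $T$ commutes with truncations: the canonical maps $T\taug{m} \to \taug{m}T$ and $\taul{m}T \to T\taul{m}$ are equivalences. This follows by applying $T$ to the fiber sequence $\iotag{m}\taug{m}X \to X \to \iotal{m-1}\taul{m-1}X$ of the excerpt: exactness yields a fiber sequence in $\C$, t-exactness places its outer terms in $\C_{\geq m}$ and $\C_{\leq m-1}$ respectively, and uniqueness of the truncation fiber sequence identifies them with $\taug{m}TX$ and $\taul{m-1}TX$. Applying this to $T = \D(G)$ gives a chain of natural equivalences
\[\pi_n \D(G) = \taug{0}\taul{0}\Sigma^{-n}\D(G) \simeq \taug{0}\taul{0}\D(G)\Sigma^{-n} \simeq \D(G)\taug{0}\taul{0}\Sigma^{-n} = \D(G)\pi_n \,,\]
where the first equivalence uses exactness and the second uses the truncation-commutation just discussed.

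It then remains to identify $\D(G)\pi_n$ with $G\pi_n$. Here I would use that $\pi_n$ takes values in the heart $\D(\A)^{\heart}\simeq\A$, together with the fact that $\D(G)$ restricted to the heart agrees with $G$: since $\D(G)$ is t-exact it carries $\D(\A)^{\heart}$ into $\C^{\heart}$, so for $A$ in the heart the object $\D(G)H(A)$ already lies in $\C^{\heart}$ and is canonically equivalent to $\pi_0\D(G)H(A)$, which is $G(A)$ by the defining property of $\D(G)$ in \Cref{UPderived}. Composing with $\pi_n$ yields $\D(G)\pi_n \simeq G\pi_n$, completing the argument.

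The computation is short; the only point demanding care is the bookkeeping between the two t-structures, namely distinguishing the truncations and shift on $\D(\A)$ from those on $\C$, and making the identification $\D(G)|_{\D(\A)^{\heart}} \simeq G$ precise under the equivalence $\D(\A)^{\heart}\simeq\A$. I expect this heart-identification step, rather than the formal commutation with truncations, to be where the verification actually carries content.
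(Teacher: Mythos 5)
Your proof is correct and follows essentially the same route as the paper's: both reduce the problem to commuting $\D(G)$ past $\Sigma^{-n}$ and the truncations by applying $\D(G)$ to the truncation fiber sequences and invoking t-exactness, and then identify the restriction of $\D(G)$ to the heart with $G$ via $\pi_0\D(G)H \simeq G$. The only difference is organizational — you package the fiber-sequence computations as a general "t-exact functors commute with truncations" lemma, whereas the paper carries them out directly for $\pi_0$ — and your closing remark about the heart-identification bookkeeping is exactly where the paper's proof also concentrates its care.
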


\begin{proof}
    We show the claim for $n=0$. Then the lemma follows from \[\pi_n \D(G) \simeq \pi_0 \Sigma^{-n} \D(G) \simeq \pi_0 \D(G) \Sigma^{-n} \simeq G \pi_0 \Sigma^{-n} \simeq G \pi_n\] where we used that $\D(G)$ is exact in the second equivalence. \\
    Note that by the definition of the restriction functor of \Cref{UPderived} we have $G \simeq \pi_0 \D(G) H$. 
    We show that there are equivalences \begin{equation}\label{Dpimap} \pi_0 \D(G) H \pi_0 \xrightarrow{\simeq} \pi_0 \D(G) \taul{0} \xleftarrow{\simeq} \pi_0 \D(G)\end{equation}
    where the first map is given by the counit of the adjunction $\iotag{0} \dashv \taug{0}$ (note that $H\pi_0 \simeq \taug{0}\taul{0}$) and the second map is given by the unit of the adjunction $\taul{0} \dashv \iotal{0}$. 
    
    To see that the first map is an equivalence consider the fiber sequence 
    \[H\pi_0 X = \taug{0}X \to X \to \taul{-1}X\]
    for $X \in \D(\A)_{\leq 0}$. Applying $\D(G)$ we get another fiber sequence 
    \[\D(G)H\pi_0 X \to \D(G)X \to \D(G)\taul{-1}X\]
    where the last term is $(-1)$-truncated and the other terms are $0$-truncated because of the t-exactness of $\D(G)$. This shows that the induced map 
    \[\pi_0\D(G)H\pi_0 X \to \pi_0\D(G)X\] 
    is an equivalence. Hence, also the first map of (\ref{Dpimap}) is an equivalence.
    
    To check that the second map of (\ref{Dpimap}) is an equivalence we consider the fiber sequence 
    \[\taug{1}Y \to Y \to \taul{0}Y\]
    for $Y\in\D(\A)$. Applying $\D(G)$ we get another fiber sequence whose first term is $1$-connective and whose last term is $0$-truncated. Hence, the unit of the adjunction $\taul{0} \dashv \iotal{0}$ induces an equivalence $\pi_0 \D(G) Y \simeq \pi_0 \D(G) \taul{0} Y$. So, also the second map of (\ref{Dpimap}) is an equivalence, which completes the proof.
\end{proof}

Using the above lemma we can conclude that $\D$ preserves conservativity of functors. Note that if an $\infty$-category $\C$ carries a right and left separated t-structure then a morphism $f$ in $\C$ is an equivalence if and only if for every $n$, the induced map $\pi_n(f)$ is an equivalence.

\begin{Corollary}\label{Dcons}
    Let $\A$ and $\C$ be as in \Cref{UPderived} and $G\colon\A\to\C^{\heart}$ an exact and colimit-preserving functor that is conservative. Then the induced functor $\D(G)$ is conservative.
\end{Corollary}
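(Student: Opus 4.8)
The plan is to detect equivalences on both sides through the homotopy object functors $\pi_n$ and thereby reduce conservativity of $\D(G)$ to conservativity of $G$ using \Cref{Dpi}. The whole argument is formal once the relevant t-structures are known to be separated, so I would first pin down that point.

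First I would record that the source $\D(\A)$ carries a left and right separated t-structure. It is right complete, hence right separated, by \cite[Proposition 1.3.5.21]{HA} as cited in the notation block; and it is left separated because equivalences in $\D(\A)$ are exactly the quasi-isomorphisms, which are detected on homology, so that $\bigcap_n \D(\A)_{\geq n} = 0$. By the remark immediately preceding the statement, it then follows that a morphism $f$ in $\D(\A)$ is an equivalence if and only if $\pi_n(f)$ is an equivalence in $\D(\A)^{\heart} \simeq \A$ for every $n$. (Note that $\C$ is likewise right complete and left separated by the hypotheses of \Cref{UPderived}, but I will only need the easy direction on the $\C$ side, namely that $\pi_n$ preserves equivalences.)

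Next I would run the actual verification. Let $f$ be a morphism in $\D(\A)$ such that $\D(G)(f)$ is an equivalence in $\C$. Since $\pi_n\colon \C \to \C^{\heart}$ carries equivalences to equivalences, $\pi_n(\D(G)(f))$ is an equivalence in $\C^{\heart}$ for every $n$. By \Cref{Dpi} there is a natural equivalence $\pi_n \D(G) \simeq G \pi_n$ of functors $\D(\A) \to \C^{\heart}$, so under this identification $\pi_n(\D(G)(f))$ corresponds to $G(\pi_n(f))$; hence $G(\pi_n(f))$ is an equivalence in $\C^{\heart}$ for all $n$. As $G$ is conservative, $\pi_n(f)$ is an equivalence in $\A$ for every $n$, and by the left and right separatedness of the t-structure on $\D(\A)$ recorded above, $f$ is an equivalence. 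This shows that $\D(G)$ is conservative.

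I do not expect any genuine obstacle here: the statement is a direct consequence of \Cref{Dpi} together with the conservativity of $G$. The only step meriting a moment of care is justifying that homotopy objects jointly detect equivalences in $\D(\A)$, i.e.\ the left separatedness of its t-structure; but this is the standard fact that a map of complexes over a Grothendieck abelian category is an equivalence precisely when it is a quasi-isomorphism.
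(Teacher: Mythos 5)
Your argument is correct and is essentially identical to the paper's proof: reduce to homotopy objects via \Cref{Dpi}, apply conservativity of $G$, and conclude by the left and right separatedness of the t-structure on $\D(\A)$ (the paper cites \cite[Remark C.5.4.11]{SAG} for left separatedness where you give the quasi-isomorphism heuristic, but the content is the same).
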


\begin{proof}
    Let $f$ be a morphism in $\D(\A)$ such that $\D(G)(f)$ is an equivalence. It follows that for each $n\in\ZZ$ the morphism $\pi_n\D(G)(f)$ is an equivalence. But by \Cref{Dpi} this morphism is equivalent to the morphism $G\pi_n(f)$. Using that $G$ is conservative we get that $\pi_n(f)$ is an equivalence for every $n$. Note that the t-structure on $\D(\A)$ is right complete by \cite[Proposition 1.3.5.21(3)]{HA}, so in particular right separated by the dual of \cite[Proposition 1.2.1.19]{HA}. Furthermore, it is left separated by \cite[Remark C.5.4.11]{SAG}. Thus, $\pi_n(f)$ being an equivalence already implies that $f$ is an equivalence.
\end{proof}

We finish this section by showing that $\D$ preserves adjoints and monadicity.

\begin{Lemma}\label{Dadjoint}
    Let $\A$ and $\B$ be Grothendieck abelian categories and $L\colon\A\to\B$ and $R\colon\B\to\A$ be exact and colimit-preserving functors such that $L$ is left adjoint to $R$. Then $\D(L)$ is left adjoint to $\D(R)$.
\end{Lemma}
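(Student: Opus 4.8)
The plan is to upgrade the object-level equivalence of \Cref{UPderived} to the statement that the assignment $G \mapsto \D(G)$ is compatible with composition and with whiskering of natural transformations, and then to transport the unit, counit and triangle identities of $L \dashv R$ along it. Write $\rho(H) \coloneqq \pi_0 \circ H \circ H_{\A}$ for the restriction-to-heart functor, so that $\D = \rho^{-1}$ by \Cref{UPderived}; here $H_{\A}\colon \A \to \D(\A)$ denotes the inclusion of the heart, and similarly $H_{\B}, H_{\C}$. First I would record the basic compatibilities. Since $\id_{\D(\A)}$ is t-exact and colimit-preserving with $\rho(\id_{\D(\A)}) \simeq \id_{\A}$, uniqueness in \Cref{UPderived} gives $\D(\id_{\A}) \simeq \id_{\D(\A)}$. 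Likewise, for exact colimit-preserving $f\colon \A \to \B$ and $g\colon \B \to \C$ the composite $\D(g)\D(f)$ is t-exact and colimit-preserving, and its restriction to the heart is $\pi_0 \D(g)\D(f) H_{\A} \simeq \pi_0 \D(g) H_{\B} f \simeq g f$ (using that a t-exact functor carries the heart to the heart, so $\D(f) H_{\A} \simeq H_{\B} f$); hence $\D(gf) \simeq \D(g)\D(f)$.

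The key point is to promote these pointwise identities to a compatibility with whiskering. For a fixed exact colimit-preserving $g\colon \B \to \C$, consider the square of $\infty$-categories whose horizontal arrows are post-composition by $\D(g)$, resp.\ by $g$, and whose vertical arrows are $\rho$; I would check that it commutes up to natural equivalence. Concretely, for $H \in \LFun^{\tex}(\D(\A),\D(\B))$ one has natural equivalences $\pi_0 \D(g) H H_{\A} \simeq \pi_0 \D(g) H_{\B} \rho(H) \simeq \pi_0 H_{\C}\, g\, \rho(H) \simeq g\,\rho(H)$, where the first uses $H H_{\A} \simeq H_{\B} \rho(H)$ (natural in $H$), the second uses the fixed equivalence $\D(g) H_{\B} \simeq H_{\C} g$, and the last uses $\pi_0 H_{\C} \simeq \id$. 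Naturality in $H$ makes the square commute, and inverting $\rho$ yields $\D(g \circ -) \simeq \D(g) \circ \D(-)$ as functors, i.e.\ compatibility with left whiskering. The analogous argument with pre-composition gives compatibility with right whiskering. Finally, $\D$ automatically preserves vertical composition and identities of natural transformations, being a functor of $\infty$-categories on the relevant functor categories.

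With these tools in hand the conclusion is formal. Set $\tilde\eta \coloneqq \D(\eta)$, regarded via the identifications above as a map $\id_{\D(\A)} \to \D(R)\D(L)$, and $\tilde\epsilon \coloneqq \D(\epsilon)\colon \D(L)\D(R) \to \id_{\D(\B)}$, where $\eta$ and $\epsilon$ are the unit and counit of $L \dashv R$. Applying $\D$ to the first triangle identity $(\epsilon L)\circ(L\eta) \simeq \id_{L}$ and using the whiskering compatibilities $\D(L\eta) \simeq \D(L)\ast\tilde\eta$ and $\D(\epsilon L) \simeq \tilde\epsilon\ast\D(L)$ gives $(\tilde\epsilon\ast\D(L))\circ(\D(L)\ast\tilde\eta) \simeq \id_{\D(L)}$; the second triangle identity is obtained symmetrically. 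Hence $(\tilde\eta,\tilde\epsilon)$ exhibit $\D(L)$ as left adjoint to $\D(R)$.

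I expect the main obstacle to be purely bookkeeping: ensuring that the pointwise equivalences $\D(gf)\simeq\D(g)\D(f)$ are assembled coherently into the natural equivalences of functors needed for whiskering, rather than merely holding objectwise. This is exactly what the naturality of $\rho = \pi_0(-)H$ supplies, so no genuinely new input beyond \Cref{UPderived} is required; the t-exactness identities $\D(f)H_{\A} \simeq H_{\B} f$ and $\pi_0 H \simeq \id$ do all the work. One could alternatively try to verify directly, via \cite[Proposition 5.2.2.8]{HTT}, that $\tilde\eta$ induces an equivalence $\Map_{\D(\B)}(\D(L)X,Y) \simeq \Map_{\D(\A)}(X,\D(R)Y)$; but while reducing $X$ to the heart is easy (both sides send colimits in $X$ to limits, and the heart generates $\D(\A)$ under colimits and shifts), reducing $Y$ to the heart is awkward because $\D(\B)$ need not be left complete, which is why I prefer the functoriality route.
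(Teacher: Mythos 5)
Your proposal is correct and takes essentially the same approach as the paper: the paper likewise defines the unit and counit as $\D(u)$ and $\D(c)$ composed with the identifications $\D(\id)\simeq\id$ and $\D(RL)\simeq\D(R)\D(L)$, and verifies the triangle identities by checking commutativity of the relevant diagram on the heart via the equivalence of \Cref{UPderived}. Your more explicit treatment of the whiskering compatibilities is just a careful spelling-out of the bookkeeping the paper compresses into ``commutativity can be checked on the heart.''
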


\begin{proof}
    We define the unit and counit as
    \[ u_{\D}\colon \id_{\D(\A)} \simeq \D(\id_{\A}) \xrightarrow{\D(u)} \D(RL) \simeq \D(R)\D(L)\]
    and
    \[c_{\D}\colon \D(L)\D(R) \simeq \D(LR) \xrightarrow{\D(c)} \D(\id_{\B}) \simeq \id_{\D(\B)} \, ,\]
    respectively, where $u\colon \id_{\A} \to RL$ and $c\colon LR \to \id_{\B}$ denote the unit and counit of the adjunction $L\dashv R$. To check that they satisfy the triangle identities (which is enough by \cite[Remark 4.4.5]{RV}) consider the following diagram whose commutativity can be checked on the heart by using the equivalence from \Cref{UPderived}
    \[\begin{tikzcd}[column sep=small]
        \D(L) \ar{rrr}{u_{\D}} \ar{d}[swap]{\simeq} &&&[-25pt] \D(L)\D(R)\D(L) \ar{rrr}{c_{\D}} \ar{dl}{\simeq} \ar{dr}[swap]{\simeq} &[-25pt]&& \D(L) \ar{d}{\simeq} \\
        \D(L)\D(\id_{\A}) \ar{rr}{\D(u)} \ar{d}[swap]{\simeq} && \D(L)\D(RL) \ar{dr}{\simeq} && \D(LR)\D(L) \ar{rr}{\D(c)} \ar{dl}[swap]{\simeq} && \D(\id_{\B})\D(L) \ar{d}{\simeq} \\
        \D(L\id_{\A}) \ar{rrr}[swap]{\D(Lu)} &&& \D(LRL) \ar{rrr}[swap]{\D(cL)} &&& \D(\id_{\B}L) \rlap{.}
    \end{tikzcd}\]
    Note that the lower horizontal composition is equivalent to $\D(cL \circ Lu)$ which is equivalent to the identity by a triangle identity for $u$ and $c$. Thus, the top horizontal composition in the diagram is equivalent to the identity. The other triangle identity can be shown analogously.
\end{proof}

\begin{Remark}
    One can also extend the functor $\D$ to a functor from the $(2,2)$-category of Grothendieck abelian categories with colimit-preserving exact functors to the $(\infty,2)$-category of stable $\infty$-categories with t-structure with colimit-preserving t-exact functors. From this description the result of \Cref{Dadjoint} is a formal consequence.
\end{Remark}

\begin{Corollary}\label{Dmonadic}
    Let $\A$ and $\B$ be Grothendieck abelian categories and $L\colon\A\to\B$ and $R\colon\B\to\A$ be exact and colimit-preserving functors such that $L$ is left adjoint to $R$. Assume that $R$ exhibits $\B$ as monadic over $\A$. Then $\D(R)$ exhibits $\D(\B)$ as monadic over $\D(\A)$.
\end{Corollary}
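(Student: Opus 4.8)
The plan is to verify the hypotheses of the $\infty$-categorical Barr--Beck theorem \cite[Theorem 4.7.3.5]{HA} for the functor $\D(R)\colon \D(\B) \to \D(\A)$. Recall that this theorem requires three things: that $\D(R)$ admits a left adjoint, that $\D(R)$ is conservative, and that $\D(\B)$ admits colimits of $\D(R)$-split simplicial objects and that $\D(R)$ preserves them.

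First I would produce the left adjoint. Since $L \dashv R$ and both $L$ and $R$ are exact and colimit-preserving, \Cref{Dadjoint} shows that the induced functor $\D(L)$ is left adjoint to $\D(R)$. Next, for conservativity: a monadic functor is in particular conservative, so $R$ is conservative. As $R$ is also exact and colimit-preserving by assumption, we may apply \Cref{Dcons} (taking $\C = \D(\A)$, so that $\C^{\heart} \simeq \A$ and $R\colon \B \to \A$ plays the role of the functor $G$ there) to conclude that $\D(R)$ is conservative.

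Finally, the colimit condition is essentially automatic. Since $\B$ is a Grothendieck abelian category, $\D(\B)$ is presentable by \cite[Proposition 1.3.5.21(1)]{HA}, and hence admits all small colimits; in particular it admits colimits of $\D(R)$-split simplicial objects. Moreover, $\D(R)$ is colimit-preserving by construction, since it lies in $\LFun^{\tex}(\D(\B), \D(\A))$, so it preserves these colimits in particular. With all three hypotheses verified, the Barr--Beck theorem yields that $\D(R)$ exhibits $\D(\B)$ as monadic over $\D(\A)$.

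I do not anticipate a genuine obstacle in this argument, as each ingredient has been prepared in the preceding results; the proof is a bookkeeping assembly of \Cref{Dadjoint}, \Cref{Dcons}, and presentability. The only point requiring a moment's attention is correctly matching the variance and the roles of the categories when invoking \Cref{Dcons} (where the source of $R$ is $\B$, not $\A$), and noting that conservativity of $R$ is extracted purely from the hypothesis that $R$ is monadic rather than assumed separately.
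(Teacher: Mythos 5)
Your proposal is correct and follows essentially the same route as the paper's proof: both verify the Barr--Beck hypotheses by obtaining the left adjoint from \Cref{Dadjoint}, deducing conservativity of $\D(R)$ from \Cref{Dcons} after extracting conservativity of $R$ from its monadicity, and handling the colimit condition via presentability of $\D(\B)$ and the fact that $\D(R)$ preserves all colimits by construction.
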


\begin{proof}
    Note that by \Cref{Dadjoint} the functor $\D(R)$ has a left adjoint. By the $\infty$-categorical Barr-Beck Theorem (cf. \cite[Theorem 4.7.3.5]{HA}) it is enough to show that $\D(R)$ is conservative, $\D(\B)$ admits colimits (of $\D(R)$-split simplicial objects) and that $\D(R)$ preserves these colimits. Using the $\infty$-categorical Barr-Beck Theorem for $R$ we get that $R$ is conservative. Hence, $\D(R)$ is conservative by \Cref{Dcons}. Moreover, the $\infty$-category $\D(\B)$ admits all colimits by \cite[Proposition 1.3.5.21(1)]{HA}, and $\D(R)$ preserves all colimits by definition.
\end{proof}
    \bibliographystyle{alpha}
\bibliography{Quellen}
	
\end{document}